  \newtheorem{heuristic}[theorem]{Heuristic}
  \newtheorem{scholium}[theorem]{Scholium}
  \newtheorem{theorem}{Theorem}[subsection]
  \newtheorem*{theorem*}{Theorem}
  \newtheorem{lemma}[theorem]{Lemma}
  \newtheorem{proposition}[theorem]{Proposition}
  \newtheorem{heuristic}[theorem]{Heuristic}
  \newtheorem{scholium}[theorem]{Scholium}
  \theoremstyle{definition}
  \newtheorem{definition}[theorem]{Definition}
  \newtheorem{example}[theorem]{Example}
  \newtheorem{examples}[theorem]{Examples}
  \theoremstyle{remark}
  \def\subsubsection{\@startsection{subsubsection}{3}
  \z@{.5\linespacing\@plus.7\linespacing}{-.5em}
  {\normalfont\bfseries}}
  \def\paragraph{\@startsection{paragraph}{4}
  \z@{.3\linespacing\@plus.5\linespacing}{-\fontdimen2\font}
  {\normalfont\bfseries}}
  \author{PETER LEFANU LUMSDAINE
    \affil{Institute for Advanced Study}
      MICHAEL A. WARREN
    \affil{Institute for Advanced Study}}
  \author[P. LeF. Lumsdaine]{Peter LeFanu Lumsdaine}
  \address{Department of Mathematics\\Stockholm University\\Stockholm, Sweden}
  \email{p.l.lumsdaine@gmail.com}
  \thanks{During the preparation of this paper, Lumsdaine was supported by NSF grant DMS-1128155 and the Fondation Sciences Mathématiques de Paris.}
  \author[M. A. Warren]{Michael A. Warren}
  \address{Los Angeles\\California\\ USA}
  \thanks{Warren was supported by the Oswald Veblen fund and NSF grant DMS-0635607.}
  \keywords{Categorical semantics, coherence theorems, comprehension categories, dependent type theory}
\newcommand{\acmarxiv}[2]{\opt{acm}{#1}\opt{arxiv}{#2}}
\definecolor{darkgreen}{rgb}{0,0.45,0}
\definecolor{darkred}{rgb}{0.75,0,0}
\definecolor{darkblue}{rgb}{0,0,0.6}
\tikzset{mylabel/.style={font=\footnotesize}}
\tikzset{mylabelsmall/.style={font=\tiny}}
\tikzset{mymidlabel/.style={auto=false,fill=white,font=\footnotesize}}
\tikzset{cross line/.style={preaction={draw=white,-,line width=6pt}}}
\newlength{\ontotipoffset}
\tikzset{>=xyto} 
\newcommand{\myto}[1][]{ \mathrel{
  \tikz[baseline={([yshift=-0.55ex]a.south)}]{%
    \node[minimum width=1em,align=center,inner xsep=0.3ex,inner ysep=0.15ex] (a) {$\scriptstyle #1$};
    \draw[->] (a.south west) -- ([xshift=0.6ex]a.south east);}
  \mkern-1mu}}
\newcommand{\fibto}{\myto}
\DeclareFontFamily{U}{MnSymbolE}{}
\DeclareFontShape{U}{MnSymbolE}{m}{n}{
    <-6>  MnSymbolE5
   <6-7>  MnSymbolE6
   <7-8>  MnSymbolE7
   <8-9>  MnSymbolE8
   <9-10> MnSymbolE9
  <10-12> MnSymbolE10
  <12->   MnSymbolE12}{}
\newcommand{\mnquote}[1]{\usefont{U}{MnSymbolE}{m}{n}\char#1\relax}
\newcommand{\quinelquote}{\raisebox{.1ex}{\rlap{\mnquote{'036}}\kern.2em}}
\newcommand{\quinerquote}{\/\raisebox{.1ex}{\kern.2em\llap{\mnquote{'043}}}}
\newcommand{\T}{\mathbb{T}} 
\newcommand{\CC}{\mathcal{C}}
\newcommand{\EE}{\mathcal{E}}
\newcommand{\FF}{\mathcal{F}}
\newcommand{\GG}{\mathcal{G}}
\newcommand{\JJ}{\mathcal{J}}
\newcommand{\TT}{\mathcal{T}}
\newcommand{\RR}{\mathcal{R}}
\newcommand{\LL}{\mathcal{L}}
\newcommand{\cat}{\textbf{Cat}}
\newcommand{\gpd}{\textbf{Gpd}}
\newcommand{\sets}{\textbf{Sets}}
\newcommand{\ssets}{\textbf{SSets}}
\newcommand{\Fib}{\mathbf{Fib}}
\newcommand{\SplFib}{\mathbf{SplFib}}
\newcommand{\CompCat}{\mathbf{CompCat}}
\newcommand{\SplCompCat}{\mathbf{SplCompCat}}
\newcommand{\clov}{\mathrm{cl}}
\newcommand{\arr}{\rightarrow} 
\DeclareMathOperator{\cod}{cod}
\newcommand{\myequiv}{\simeq}
\newcommand{\homot}{\sim}
\newcommand{\myemph}[1]{\textbf{#1}}}
\newcommand{\myemph}[1]{\emph{#1}}}
\newcommand{\iso}{\cong}
\newcommand{\comp}{\circ}
\newcommand{\famop}{\triangleleft}
\DeclareMathOperator{\ev}{ev}
\newcommand{\Imp}{\Rightarrow}
\newcommand{\adjoint}{\dashv}
\newcommand{\fibslice}{\mathord{/ \! /}}
\newcommand{\LF}{\hyperref[def:LF]{(LF)}}
\newcommand{\pdfCbang}{\texorpdfstring{$\CC_!$}{C\_!}}
\newcommand{\types}{\vdash}
\newcommand{\judge}[3][]{#2\;\vdash_{#1}\;#3}
\newcommand{\of}{\mathord{:}}
\newcommand{\fibration}{\ \syntax{Fib}}}
\newcommand{\fibration}{\ \syntax{fib}}}
\newcommand{\type}{\ \syntax{Type}}}
\newcommand{\type}{\ \syntax{type}}}
\newcommand{\lscott}{[\![}
\newcommand{\rscott}{]\!]}
\newcommand{\interp}[1]{\lscott #1 \rscott}
\newcommand{\lexinterp}{[} 
\newcommand{\rexinterp}{]} 
\newcommand{\exinterp}[1]{\lexinterp #1 \rexinterp}
\newcommand{\exprod}{\overline{\prod}}
\newcommand{\exapp}{\overline{\app}}
\newcommand{\app}{\syntax{app}}
\newcommand{\id}[1][]{\syntax{Id}_{#1}}
\newcommand{\Jelim}[1][]{\syntax{J}_{#1}}
\newcommand{\jj}{\syntax{j}}
\newcommand{\rr}{\syntax{r}}
\newcommand{\dpair}[2]{\langle #1,#2 \rangle}
\newcommand{\dsplit}[2]{\syntax{split}_{#1,#2}}
\newcommand{\inl}{\syntax{inl}}
\newcommand{\inr}{\syntax{inr}}
\newcommand{\copair}[2]{\langle #1 , #2 \rangle}
\newcommand{\W}{\syntax{W}}
\newcommand{\fold}{\syntax{fold}} 
\newcommand{\Welim}{\syntax{wrec}}
\newcommand{\tensor}{\otimes}
\newcommand{\zero}{\syntax{0}}
\newcommand{\unit}{\syntax{1}}
\newcommand{\pt}{\syntax{tt}}
\newcommand{\unitelim}{\syntax{urec}}
\newcommand{\univ}{\syntax{V}}
\newcommand{\elt}[1][]{\syntax{el}_{#1}}
\newcommand{\baseof}[1]{{V_{#1}}}
\newcommand{\topof}[1]{E_{#1}}
\newcommand{\nameof}[1]{\quinelquote #1 \quinerquote}
\newcommand{\stable}[2]{#2}
\begin{document}

\opt{acm}{\markboth{P. LeF. Lumsdaine and M. A. Warren}{The local universes model}}

\acmarxiv
  {\title{The local universes model: an overlooked coherence construction for dependent type theories}}
  {\title[The local universes model]{The local universes model: \\ an overlooked coherence construction \\ for dependent type theories}}

\acmarxiv
  {\input{temp-local-universes-acm-authors}}
  {\input{temp-local-universes-arxiv-authors}}

\opt{arxiv}{\date{\today}}

\begin{abstract}
We present a new coherence theorem for comprehension categories, providing strict models of dependent type theory with all standard constructors, including dependent products, dependent sums, identity types, and other inductive types.

Precisely, we take as input a ``weak model'': a comprehension category, equipped with structure corresponding to the desired logical constructions.
We assume throughout that the base category is close to locally Cartesian closed: specifically, that products and certain exponentials exist.
Beyond this, we require only that the logical structure should be \emph{\stable{3}{weakly stable}}---a pure existence statement, not involving any specific choice of structure, weaker than standard categorical Beck--Chevalley conditions, and holding in the now standard homotopy-theoretic models of type theory.

Given such a comprehension category, we construct an equivalent split one, whose logical structure is strictly stable under reindexing.
This yields an interpretation of type theory with the chosen constructors.

The model is adapted from Voevodsky’s use of universes for coherence, and at the level of fibrations is a classical construction of Giraud.
It may be viewed in terms of local universes or delayed substitutions.
\end{abstract}

\opt{acm}{\input{temp-local-universes-acm-metadata}}

\opt{arxiv}{\ \vspace{-1.5ex}}

\maketitle

\opt{arxiv}{\vspace{-3ex}
            \tableofcontents}

\section{Introduction}

Constructing interpretations of dependent type theory from scratch is a laborious, bureaucratic, and error-prone task.
Various algebraic axiomatizations of such models (contextual categories and their relatives) abstract away many of the technicalities, allowing constructions of models to concentrate, for the most part, on their real substance---the logical constructions which genuinely constitute the model.

One issue remains, however, which feels like it ought to be another mere technicality, but which has not been so successfully abstracted away: the so-called coherence problem.
In most concrete models, it is not hard to resolve directly, by more or less ad hoc methods.
In a few, however (notably, Voevodsky’s simplicial model \cite{kapulkin-lumsdaine-voevodsky:simplicial-model} and similar homotopy-theoretic models), it is much less straightforward to deal with.
It has also proven problematic for more abstract theorems on model existence, in for instance the weak factorization system models of \cite{warren-awodey:homotopy-theoretic-models}.

The most powerful coherence result to date is that of \acmarxiv{\cite{hofmann:lcccs}}{Hofmann \cite{hofmann:lcccs}}, which provides coherence for a wide range of models.
However, it does not apply to the homotopy-theoretic models mentioned above.
The present paper gives a new coherence construction, with slightly different hypotheses from Hofmann’s, applying in particular to a wide range of homotopy-theoretic models of intensional type theory. \\

Specifically, the present work arose from a careful reading of Voevodsky’s model in simplicial sets \cite{voevodsky:notes-2011-04,kapulkin-lumsdaine-voevodsky:simplicial-model}.
Universes are used there for two distinct purposes: firstly to obtain coherence of the model; and secondly to become type-theoretic universes within the model.
It turned out that not only may the two aspects be entirely disentangled, but moreover, the coherence construction may be modified to work without a universe.

The precise sense of \myemph{coherence} in question is that substitution/pullback should be strictly functorial, and should strictly preserve all the logical structure under consideration.
This holds in the syntax of type theory; so it must hold in any direct model of the theory.

However, categorically such strictness is rather unnatural, involving on-the-nose equality of objects, and rarely arises automatically in nature.
We of course expect to find \myemph{some} kind of stability---some preservation of logical structure under pullback, such as stability up to isomorphism, or similar---and expect this to be necessary for modelling the type theory.
So the outstanding question is: what is some good notion of stability, categorically natural and satisfied in as many of the known models as possible, but strong enough that structures satisfying it can be transformed into ones with \stable{0}{strictly stable} logical structure?

We investigate this within the framework of \myemph{comprehension categories} \cite{jacobs:comprehension-categories} (one of the many algebraic approaches to modelling type theory).
Roughly, a comprehension category consists of a fibration of categories, encoding the contexts, types, and terms of the theory, together with further algebraic structure implementing the logical rules.
Direct models of syntax are given by \myemph{split} comprehension categories with \myemph{\stable{0}{strictly stable}} logical structure.

The coherence construction we consider, transforming weak models into strict, is at the level of underlying fibrations a classical one, due to Giraud \acmarxiv{\citeyear[I,~2.4.3]{giraud:cohomologie-non-abelienne}}{\cite[I,~2.4.3]{giraud:cohomologie-non-abelienne}}.
Given a comprehension category $\CC$, we replace it with an equivalent split one $\CC_{!}$, in which the objects/contexts are as in $\CC$, but where a type $A$ over $\Gamma$ consists of another object $\baseof{A} \in \CC$, together with a type $\topof{A}$ over $\baseof{A}$ in the sense of $\CC$, and a map $\nameof{A} : \Gamma \myto \baseof{A}$.

One may view this as a “delayed substitution”, i.e.\ as a surrogate for the pullback $[A] := \topof{A}[\nameof{A}]$, and see the pair $(\baseof{A},\topof{A})$ as a “local universe”---some family of types, not necessarily terribly large or satisfying any closure conditions, from which the map $\nameof{A} : \Gamma \myto \baseof{A}$ then picks out the types that actually occur in $[A]$.
\begin{align*}
  \begin{tikzpicture}[auto]
    \node (G) at (0,0) {$\Gamma$};
    \node (N) at (1.5,0) {$\baseof{A}.$};
    \node (Ntop) at (1.5,1.5) {$\topof{A}$};
    \draw[->] (G) to node[mylabel] {$\nameof{A}$} (N);
    \draw[dotted] (Ntop) to (N);
  \end{tikzpicture}
\end{align*}

Given a universe $(V,E)$ (not merely a set, but itself an object of $\CC$), closed under operations corresponding to the logical constructors, one can simply restrict to the case where $(\baseof{A},\topof{A})$ is $(V,E)$, so that types are just maps into $V$.
Logical constructions are then modelled by composing these maps with the appropriate operations on $V$.
This is Voevodsky’s approach in \cite[\textsection 1]{kapulkin-lumsdaine-voevodsky:simplicial-model}, also previously considered by Hofmann and Streicher \acmarxiv{\citeyear{hofmann-streicher:lifting-grothendieck-universes}}{\cite{hofmann-streicher:lifting-grothendieck-universes}}.
%

In the absence of such a universe, however, one can implement logical constructions by allowing the local universes to vary.

For instance, suppose we are given types $A$ and $B$, and wish to construct their sum $A + B$.
We do not expect that either of the families $\topof{A} \fibto \baseof{A}$, $\topof{B} \fibto \baseof{B}$ will include the sum types we need.
However, the product $\baseof{A} \times \baseof{B}$ can be used to parametrize all possible sums of a type from $\baseof{A}$ and a type from $\baseof{B}$.
So we take this as the new local universe $\baseof{A+B}$, together with the family of such sums $\topof{A}[\pi_1] + \topof{B}[\pi_2]$ over it (a sum type over $\baseof{A+B}$) as $\topof{A+B}$.
Then for $\nameof{A+B}$ we take $(\nameof{A},\nameof{B}) : \Gamma \myto \baseof{A+B}$, picking out the specific sums required.

Substitution in $\CC_{!}$ corresponds to precomposition with $\nameof{A}$, happening entirely to the left of $\Gamma$; and logical constructions are implemented entirely in terms of the local universes $\topof{A} \fibto \baseof{A}$, all on the right of $\Gamma$.
There is no interference between these operations, so \stable{0}{strict stability} is obtained.

The stability conditions on $\CC$ required to make this work then turn out to be quite weak, and quite categorically natural.
Suppose we are considering a type-constructor, \myemph{widgets}, defined by some universal property.
Then what we need to assume is that $\CC$ has widgets whose pullbacks are again widgets---call such things \myemph{\stable{3}{weakly stable} widgets}.
Given these, widgets in $\CC_!$ may be implemented by choosing \stable{3}{weakly stable} widgets over the local universes, secure in the knowledge that pulled back to the actual contexts involved, they will have the required universal property.
So, typically, if $\CC$ has \stable{3}{weakly stable} widgets, then $\CC_{!}$ will have \stable{0}{strictly stable} widgets.

In case the universal property is strong enough to determine widgets up to isomorphism, \stable{3}{weak stability} is equivalent to a standard Beck-Chevalley condition; but of course, connectives in intensional type theory are often defined by rather weaker properties.

Note also that this stability condition is simply a matter of existence, and does not depend on any specific choices of structure in $\CC$.

One more ambient hypothesis is required, for the manipulation of local universes (for instance, the use of $\baseof{A} \times \baseof{B}$ above): we will require some products and exponentials, in the categorical sense (stronger than the type-theoretic sense), to exist in $\CC$.

Our main theorem is therefore:

\acmarxiv{\begin{theorem}[]}{\begin{theorem*}}
Let $\CC$ be a full comprehension category, whose underlying category has (a) finite products, and (b) exponentials along display maps into display maps and product projections (condition \LF\ below).

Then there is an equivalent full split comprehension category $\CC_!$; and if $\CC$ has \stable{3}{weakly stable} binary sums (resp.\ $\Pi$-types, $\Sigma$-types, identity types, $\W$-types, \ldots), then $\CC_!$ has \stable{0}{strictly stable} binary sums ($\Pi$-types, \ldots), and hence models the syntax of type theory with binary sums ($\Pi$-types, \ldots).
\acmarxiv{\end{theorem}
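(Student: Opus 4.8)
The plan is to construct $\CC_!$ explicitly and then verify its properties one constructor at a time. First I would set up the split comprehension category $\CC_!$ by hand: its contexts are those of $\CC$ unchanged, and a type over $\Gamma$ is a triple $A = (\baseof{A}, \topof{A}, \nameof{A})$ consisting of an object $\baseof{A}$ of $\CC$, a type $\topof{A}$ over $\baseof{A}$ in the sense of $\CC$, and a map $\nameof{A} \colon \Gamma \myto \baseof{A}$, with context extension $\Gamma.A$ defined as the pullback $[A] := \topof{A}[\nameof{A}]$ and its display map the pullback of that of $\topof{A}$. Reindexing along $f \colon \Delta \myto \Gamma$ acts only on the name, by precomposition: $(\baseof{A}, \topof{A}, \nameof{A} \comp f)$. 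The whole point of the construction is that this is \emph{strictly} functorial --- since composition in $\CC$ is strictly associative and unital, so is substitution in $\CC_!$ --- so $\CC_!$ is automatically split. Terms, the comprehension functor, and fullness are then inherited from the corresponding data for the realizations $[A]$ in $\CC$.

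Second, I would exhibit the equivalence of comprehension categories $\CC_! \myequiv \CC$. The realization functor sends a type $A$ to $[A] = \topof{A}[\nameof{A}]$, acting as the identity on contexts and compatibly with context extension. Essential surjectivity is immediate, since any type $T$ over $\Gamma$ in $\CC$ is realized by the triple $(\Gamma, T, \mathrm{id}_\Gamma)$; full faithfulness comes from defining morphisms of triples as morphisms of their realizations, together with the universal property of the defining pullbacks and fullness of $\CC$. This establishes the first assertion: $\CC_!$ is a full split comprehension category equivalent to $\CC$.

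Third --- and here lies the substance --- I would transfer the logical structure one constructor at a time, following a single uniform recipe that never works over $\Gamma$ directly. To equip $\CC_!$ with strictly stable sums of types $A, B$ over $\Gamma$, I assemble one combined local universe, choose a \stable{3}{weakly stable} widget over it once and for all, and let the name carry all dependence on $\Gamma$. Concretely, the combined base is the product $\baseof{A} \times \baseof{B}$ (available by hypothesis (a)), carrying the weakly stable sum $\topof{A}[\pi_1] + \topof{B}[\pi_2]$ as $\topof{A+B}$, with $\nameof{A+B} := (\nameof{A}, \nameof{B})$. Strict stability is then automatic: substitution precomposes the name on the left of $\Gamma$, whereas the sum operation acts only on the local universes on the right, so the two commute on the nose, with no Beck--Chevalley choices to reconcile. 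Correctness follows from weak stability: since $\pi_1 \comp \nameof{A+B} = \nameof{A}$ and $\pi_2 \comp \nameof{A+B} = \nameof{B}$, the realization $[A+B]$ is a pullback of the chosen weakly stable sum, and weak stability says precisely that such a pullback is again a sum of $[A]$ and $[B]$.

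The main obstacle I anticipate is making this recipe work for the constructors carrying genuine dependency, above all $\Pi$-types (and likewise $\W$-types), and this is exactly what forces hypothesis \LF. Given $A$ over $\Gamma$ and $B$ over $\Gamma.A$, the combined base for $\Pi_A B$ must parametrize, over each code in $\baseof{A}$, an assignment of a $\baseof{B}$-code to each point of the corresponding fibre of $\topof{A} \fibto \baseof{A}$; categorically this is the exponential, along the display map $\topof{A} \fibto \baseof{A}$, of the product projection $\baseof{B} \times \topof{A} \myto \topof{A}$ --- precisely the exponentials into product projections demanded by \LF. The delicate points are to build $\nameof{\Pi_A B}$ from $\nameof{A}$ and $\nameof{B}$ via the universal property of this exponential, to equip the resulting base with a weakly stable $\Pi$-type over $\CC$, and to check that its realization recovers the dependent product of $[A]$ and $[B]$. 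Once these dependent cases are dispatched, the remaining constructors ($\Sigma$-types, identity types, and the other inductive families) follow by the same left/right separation with only bookkeeping changes; the concluding clause, that $\CC_!$ then models the syntax, is immediate from the standard soundness of split comprehension categories with \stable{0}{strictly stable} structure.
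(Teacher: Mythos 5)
Your overall architecture---the definition of $\CC_!$, splitness coming for free from strict associativity of composition in $\CC$, the equivalence with $\CC$, and the ``left of $\Gamma$ / right of $\Gamma$'' separation for type formation---matches the paper, and your combined local universes for $(+)$ and $\Pi$ (the product $\baseof{A}\times\baseof{B}$, and the exponential along the display map of $\topof{A}$ of a product projection) are exactly the paper's. But there is a genuine gap: you never say how the \emph{term} constructors---copairing for sums, $\lambda$-abstraction for $\Pi$, and above all the eliminator $\jj$ for identity types---are defined so as to be strictly stable, and your justification for sums (``weak stability says precisely that such a pullback is again a sum of $[A]$ and $[B]$'') points straight at the trap the paper explicitly flags. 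Knowing that $[A+B]$ \emph{is} a sum over $\Gamma$ only yields a copair chosen over $\Gamma$; such a choice has no reason to commute with substitution, because for merely weakly stable structure the eliminator is a bare existence datum with no naturality. This is precisely the failure mode that defeats the $\CC_*$ strictification for intensional identity types, i.e.\ the case the theorem exists to handle, so it cannot be dismissed as ``bookkeeping''.

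The missing idea is that the representing-object trick must be applied to the \emph{entire} premise of each elimination rule, term premises included. For copairing one forms an object $\baseof{\copair{d_1}{d_2}}$ classifying whole tuples $(\nameof{A_1},\nameof{A_2},\nameof{C},d_1,d_2)$---this is where the exponentials of \LF\ are needed again, beyond $\Pi$-formation, in order to internalize the sections $d_i$ and the name map $\nameof{C}$ defined on a context extension of $\Gamma$---then chooses one weakly stable copair in the universal case over that object, and finally reindexes it along the classifying map $\Gamma\myto\baseof{\copair{d_1}{d_2}}$. Strict stability then holds because the only dependence on $\Gamma$ is through the classifying map and the reindexing of a \emph{map} between fibers, both strictly natural in $\Gamma$. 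The same two-stage pattern handles $\lambda$, $\jj$ (with the Frobenius context $\Delta$ folded into the representing object), and the $\W$-eliminator; your proposal as written would only succeed for models whose eliminators are already pseudo-stable.
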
}{\end{theorem*}}

The paper is organized as follows.

First, in Section~\ref{sec:setting}, we survey some background: the general setting of comprehension categories, some existing split replacement constructions, a range of stability conditions for logical structure, and existing results on when such structure lifts to split replacements.

In Section~\ref{sec:main-thm}, we give our main construction, in several stages.
We first set up the ``local universes'' split replacement $\CC_!$, and roughly preview how logical structure will lift to it.
With this as motivation, we set up some technical tools for manipulating universes.
Equipped with these, we are then ready for the full details of the construction.
In Section~\ref{sec:main-proof}, we set out for each logical constructor in turn the precise statement and construction of its lifting to $\CC_!$.
Taken together, these constitute the main theorem.

Finally, in Section~\ref{sec:notes}, we discuss how the construction may be generalized to further logical constructors, and conclude by listing some applications.

\opt{arxiv}{
  \paragraph{Acknowledgements}
  \input{temp-local-universes-acks}
}

\section{Setting} \label{sec:setting}

\subsection{Comprehension categories}

We present models of type theory via the formalism of comprehension categories.
We recall here the key points of this approach; for more detailed background, see \cite{jacobs:comprehension-categories}.

\begin{definition}
  A \myemph{comprehension category} consists of a category $\CC$ together with a (cloven) Grothendieck fibration $P:\TT \myto \CC$ and a functor $\chi:\TT \myto \CC^\arr$ (the \myemph{comprehension}), sending cartesian arrows to pullback squares, and such that
  \begin{align*}
    \begin{tikzpicture}[auto]
      \node (E) at (0,1.5) {$\TT$};
      \node (C) at (1.5,0) {$\CC$};
      \node (C2) at (3,1.5) {$\CC^\arr$};
      \draw[->] (E) to node[mylabel] {$\chi$} (C2);
      \draw[->] (E) to node[mylabel,swap] {$P$} (C);
      \draw[->] (C2) to node[mylabel] {$\cod$} (C);
    \end{tikzpicture}
  \end{align*}
  commutes (strictly).
  We say that a comprehension category is \myemph{split} if $P$ is a split fibration, and \myemph{full} if $\chi$ is full and faithful.

  In the present work (as in much of the literature), all comprehension categories are taken to be full.

  Split comprehension categories are models of an essentially algebraic theory, \acmarxiv{and as such, have}{so carry} an evident notion of homomorphism, forming a category $\SplCompCat$.
  Restricting to some fixed base category $\CC$ (and morphisms acting as the identity on $\CC$) yields a subcategory $\SplCompCat(\CC)$.
  
  Morphisms of non-split comprehension categories are subtler.
  For the present paper, we take such a morphism to consist of a functor $F : \CC' \myto \CC$ of base categories, and a cartesian functor $\bar{F} : \TT' \myto \TT$, strictly over $F$, and commuting strictly with the comprehension functors.
  We write $\CompCat$ for the resulting category; and, again, $\CompCat(\CC)$ for the category of comprehension categories on a fixed base $\CC$.

  (The strict commutation with comprehension is, for many purposes, rather unnatural.
  We take it here just for simplicity, since we make little use of morphisms of general comprehension categories.)
\end{definition}

(For readers more familiar with other algebraic approaches, note that full split comprehension categories are precisely equivalent to categories with attributes, categories with families, type-categories, and the like; contextual categories are also closely comparable to all of these, but not quite equivalent.)

\begin{example}
  The canonical example is given by the syntax of type theory itself.

  Take $\T$ to be any type theory with the judgement forms and structural rules of Martin-L\"of type theory.
  Write $\CC_\T$ for the \myemph{category of contexts} of $\T$: objects are the contexts of $\T$, and arrows are substitutions between contexts, all up to judgemental equality.
  Over this, take $\TT_\T$ to be the category of types-in-context of $\T$, with $p : \TT_\T \myto \CC_\T$ sending a type-in-context to its context; so the fiber $\TT_\T(\Gamma)$ is the category of types over $\Gamma$.
  Reindexing is given by substitution in types; since this is strictly functorial, it makes $p$ into a split fibration.
  Finally, the comprehension operation sends a type-in-context $\judge{\Gamma}{A}$ to the context extension $\Gamma,x\of A$ together with its dependent projection $\pi_{\Gamma,A} : (\Gamma,x \of A) \myto \Gamma$.

  Together, these form a (full) split comprehension category $\CC_\T$.
\end{example}

This example motivates much of the terminology and notation for general comprehension categories $(\CC,\TT,p,\chi)$.

The objects of the base category $\CC$ are thought of as \myemph{contexts}; and given such an object $\Gamma$, we consider objects of the fiber $\TT(\Gamma)$ as \myemph{types over $\Gamma$}.

Given a type $A \in \TT(\Gamma)$, its comprehension $\chi(A)$ is an arrow with codomain $\Gamma$.
We denote the domain of $\chi(A)$ by $\Gamma.A$; it may be seen as the context extension of $\Gamma$ by some new variable of type $A$, with $\chi(A)$ the resulting dependent projection:
\begin{align*}
  \begin{tikzpicture}[auto]
      \node (GA) at (0,1.25) {$\Gamma.A$};
      \node (G) at (0,0) {$\Gamma.$};
      \draw[->] (GA) to node[mylabel] {$\chi(A)$} (G);
  \end{tikzpicture}
\end{align*}
We refer to composites of such maps as \myemph{display maps}.

Next, given a map $\sigma:\Delta \myto \Gamma$ (which we think of as a context morphism, or substitution) and an object $A$ of $\TT(\Gamma)$, we write $\sigma_{A}:A[\sigma] \myto \Gamma$ for the lift provided by the cleaving of the fibration $P$.
Diagrammatically:
\begin{align*}
  \begin{tikzpicture}[auto]
    \node (Delta) at (0,0) {$\Delta$};
    \node (Gamma) at (3,0) {$\Gamma$};
    \node (A) at (3,2) {$A$};
    \node (As) at (0,2) {$A[\sigma]$};
    \node (E) at (4,2) {$\TT$};
    \node (C) at (4,0) {$\CC.$};
    \draw[->] (Delta) to node[mylabel] {$\sigma$} (Gamma);
    \draw[->] (E) to node[mylabel] {$P$} (C);
    \draw[->] (As) to node[mylabel] {$\sigma_{A}$} (A);
    \draw[dotted] (A) to (Gamma);
    \draw[dotted] (As) to (Delta);
  \end{tikzpicture}
\end{align*}
The type $A[\sigma]$ may be seen as the result of applying the substitution $\sigma$ to $A$.
Following our notation for context extensions, we denote the comprehension $\chi(\sigma_A)$ by $\sigma.A : \Delta.A[\sigma] \myto \Gamma.A$.

Given our standing assumption of fullness, we will also sometimes silently conflate maps in fibers of $\TT$ with maps in slices of $\CC$.

In syntactic categories $\CC_\T$, terms of a type $A$ in context $\Gamma$ correspond to sections $t : \Gamma \myto \Gamma.A$ of the dependent projection $\Gamma.A$; such sections occur frequently when working with comprehension categories.
We will therefore often write just “a section” to mean “a section of a dependent projection”, unless specified otherwise.
\begin{align*}
  \begin{tikzpicture}[auto]
    \node (GA) at (0,2) {$\Gamma.A$};
    \node (Gtop) at (-2,1) {$\Gamma$};
    \node (Gbottom) at (0,0) {$\Gamma$};
    \draw[->] (Gtop) to node[mylabel] {$a$} (GA);
    \draw[->,bend right=15] (Gtop) to node[mylabel,swap] {$1_{\Gamma}$} (Gbottom);
    \draw[->] (GA) to node[mylabel] {$\chi(A)$} (Gbottom);
  \end{tikzpicture}
\end{align*}

Finally, for a map $\sigma : \Delta \myto \Gamma$, we extend the reindexing notation $A[\sigma]$ in several ways.
Most straightforwardly, it denotes the reindexing functor $\TT(\Gamma) \myto \TT(\Delta)$, with action on objects provided by the cleaving of $\TT$.
More generally, given a map $f : A \myto B$ in $\TT(\Gamma)$, and \myemph{any} specified reindexings $A',B'$ of $A$ and $B$ (precisely, cartesian lifts $\sigma_A : A' \myto A$, $\sigma_B : B' \myto B$ of $\sigma$), we write $f[\sigma]$ for the induced map $A' \to B'$ in $\TT(\Delta)$.
Finally, we write $(-)[\sigma]$ also for the induced pullback functor $\CC \fibslice \Gamma \myto \CC \fibslice \Delta$, where $\CC \fibslice \Gamma$ denotes the full subcategory of $\CC/\Gamma$ with just display maps as objects.

All the above notation and terminology suggests that the syntactic comprehension categories $\CC_\T$ may be seen as typical.
This is indeed the case; specifically, they turn out to enjoy certain universal properties, arguably the \emph{raison d'\^e{}tre} of comprehension categories.

Starting with $\T_{\emptyset}$, the theory given by just the structural rules, one has:
\begin{proposition}[\acmarxiv
    {\cite[\textsection 15]{cartmell:generalised-algebraic-theories}}
    {Cartmell \cite[\textsection 15]{cartmell:generalised-algebraic-theories}}]
  $\CC_{\T_{\emptyset}}$ is the initial split comprehension category.
\end{proposition}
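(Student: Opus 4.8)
The plan is to exhibit, for an arbitrary split comprehension category $\DD$ (with base $\DD$, fibration $P' : \TT' \myto \DD$, and comprehension $\chi'$), a unique homomorphism $\interp{-} : \CC_{\T_\emptyset} \myto \DD$ in $\SplCompCat$, and to define it by recursion on the syntax of $\T_\emptyset$. Since, as noted above, split comprehension categories are exactly the models of an essentially algebraic theory, its operations --- an empty context, context extension, the associated projections and generic variables, and reindexing of types and terms --- supply the target for each clause, and the guiding principle is that each structural rule of $\T_\emptyset$ corresponds to one such operation. Thus the interpretation is forced: the empty context $()$ goes to the empty context of $\DD$; a context extension $\Gamma, x \of A$ to the comprehension $\interp{\Gamma}.\interp{A}$; a type $A$ over $\Gamma$ to an object $\interp{A} \in \TT'(\interp{\Gamma})$; a term to a section of the corresponding display map; and a substitution $\sigma$ to a morphism $\interp{\sigma}$, with its action on types and terms given by the cleaving of $P'$.

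I would first set this up as a partial function on raw syntax and prove, by a routine induction over derivations, that it is total on derivable judgements, sending the variable rule, weakening, and substitution to the projections, reindexing functors, and composites supplied by the structure of $\DD$. Because $\T_\emptyset$ contains no type- or term-forming constants, the recursion is short: the only generators are those of the structural rules themselves, so beyond the bookkeeping of contexts, substitutions, and their comprehensions there is essentially nothing to interpret.

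The crux is well-definedness --- that $\interp{-}$ respects judgemental equality, i.e.\ validates every equation imposed by the structural rules. These are exactly the functoriality laws for substitution: $A[1] = A$ and $A[\sigma][\tau] = A[\sigma \comp \tau]$, together with the matching laws for terms and for the comprehension maps $\sigma.A$. This is precisely where the hypothesis that $\DD$ is \emph{split} enters: strict functoriality of reindexing makes these equations hold on the nose, so judgementally equal syntactic expressions receive equal interpretations. (In a non-split comprehension category they would hold only up to canonical isomorphism, and $\interp{-}$ would fail to be strictly functorial --- which is the entire reason for passing to split structure, and the motivation for the coherence theorem to follow.) I expect this verification to be the main, though largely mechanical, obstacle.

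Finally, I would check that $\interp{-}$ is a morphism in $\SplCompCat$: it preserves the fibration, the cleaving, and the comprehension functor strictly, which holds by construction, since each clause was defined using exactly the corresponding operation of $\DD$. Uniqueness follows by the same induction --- any homomorphism must send $()$ to the empty context, extensions to comprehensions, and reindexing to reindexing, hence agrees with $\interp{-}$ on every generator and so everywhere. Conceptually this is just the statement that $\CC_{\T_\emptyset}$ is the term model of that essentially algebraic theory and that term models are initial; the argument above merely unwinds this through the syntactic presentation.
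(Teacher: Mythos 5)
The paper does not actually prove this proposition; it is quoted from Cartmell, and your sketch is the standard argument that the citation stands in for: a partial interpretation function on raw syntax, totality by induction on derivations, well-definedness under judgemental equality secured exactly by splitness (strict functoriality of reindexing), and uniqueness by the same induction, all of which is the term-model/initiality argument for the underlying essentially algebraic theory. Your identification of the crux --- that the functoriality equations $A[1]=A$ and $A[\sigma][\tau]=A[\sigma\comp\tau]$ must hold on the nose, which is precisely what splitness buys --- is the right one, and the partial-interpretation device is the correct way to avoid worrying about a judgement having several derivations. The only caveat worth recording is that for the statement to be literally true one must read the essentially algebraic theory as including a chosen terminal object (the empty context) preserved by morphisms, as you implicitly do; the paper's definition of $\SplCompCat$ is silent on this, so your reading is the charitable one under which the proposition holds.
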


In other words, any split comprehension category $\CC$ admits a canonical map $\interp{-} : \CC_{\T_\emptyset} \myto \CC$; that is, an interpretation of the syntax of $\T_{\emptyset}$ in $\CC$.
This justifies taking split comprehension categories as a definition of \myemph{models of $\T_{\emptyset}$}.

Extending this correspondence to less trivial type theories, one considers comprehension categories with extra structure.
Take, for instance, the theory $\T_\tensor$ given by the structural rules together with a single type-forming rule:
\begin{mathpar}
  \inferrule*{\Gamma \types A \type \\ \Gamma \types B \type}{\Gamma \types A \tensor B \type}
\end{mathpar}

Say that \myemph{(\stable{0}{strictly stable}) $\tensor$-structure} on a comprehension category $\CC$ consists of an operation giving, for any objects $\Gamma \in \CC$ and $A,B \in \TT(\Gamma)$, an object $A \tensor_\Gamma B \in \TT(\Gamma)$, \stable{0}{strictly stable} under reindexing, i.e.\ such that for any such $\Gamma, A, B$ and map $\sigma : \Gamma' \myto \Gamma$, we have $(A \tensor_\Gamma B)[\sigma] = A[\sigma] \tensor_{\Gamma'} B[\sigma]$.
Then $\CC_{\T_\tensor}$ carries an evident $\tensor$-structure (with its \stable{0}{strict stability} arising inevitably from the inductive definition of substitution, $(A \tensor B)[\sigma] := A[\sigma] \tensor B[\sigma]$); and indeed, we have:
\begin{proposition}
The syntactic category $\CC_{\T_\tensor}$ is initial in the category of split comprehension categories with $\tensor$-structure, and functors strictly preserving the comprehension functor and $\tensor$-structure.
\end{proposition}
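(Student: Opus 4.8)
The plan is to prove this exactly as one proves the analogous initiality statement for $\CC_{\T_\emptyset}$ (the preceding proposition, due to Cartmell), adding a single inductive case for the new type-former. Given any split comprehension category $\CC$ equipped with $\tensor$-structure, I must produce a \emph{unique} morphism $\interp{-} : \CC_{\T_\tensor} \myto \CC$ in $\SplCompCat$ that strictly preserves both the comprehension functor and the $\tensor$-structure. As usual for such term-model arguments, this splits into existence (defining $\interp{-}$ and checking it is well-defined and structure-preserving) and uniqueness.

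For existence, I would define $\interp{-}$ by recursion on the raw syntax of $\T_\tensor$, simultaneously on contexts, substitutions, types-in-context and terms, following verbatim the definition for $\T_\emptyset$ on all the structural clauses, and adding the single new clause: for a type-in-context $\judge{\Gamma}{A \tensor B}$, set
\[
  \interp{A \tensor B} \;:=\; \interp{A} \tensor_{\interp{\Gamma}} \interp{B},
\]
which is legitimate precisely because $\CC$ carries $\tensor$-structure. The substantive content is then to verify, by the usual mutual induction, that this assignment (i)~is invariant under the judgemental equalities of $\T_\tensor$, and (ii)~commutes strictly with reindexing, i.e.\ $\interp{A[\sigma]} = \interp{A}[\interp{\sigma}]$ and similarly on terms. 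Property~(ii) is the load-bearing invariant: in the new clause it reduces exactly to the equation $(A \tensor_\Gamma B)[\sigma] = A[\sigma] \tensor_{\Gamma'} B[\sigma]$, so the strict stability hypothesis on the $\tensor$-structure of $\CC$ is precisely what is needed, and nothing more. All other clauses are handled as in the $\T_\emptyset$ case.

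For uniqueness, suppose $\interp{-}$ and $\interp{-}'$ are two such strict morphisms. On the structural fragment they must agree by the initiality of $\CC_{\T_\emptyset}$ (equivalently, by the same mutual induction, since each structural clause of the interpretation is forced by the requirement that the morphism be cartesian and preserve comprehension strictly). On a type $A \tensor B$ they must agree because both strictly preserve $\tensor$-structure: using the inductive hypotheses $\interp{A} = \interp{A}'$ and $\interp{B} = \interp{B}'$, we get $\interp{A \tensor B} = \interp{A} \tensor \interp{B} = \interp{A}' \tensor \interp{B}' = \interp{A \tensor B}'$. An outer induction on the construction of contexts, types and terms then propagates agreement everywhere.

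The only real obstacle is bureaucratic rather than conceptual: one must set up the mutual recursion carefully enough that the substitution-compatibility invariant~(ii) can be threaded through every clause, since it is exactly what licenses each subsequent clause to typecheck in $\CC$. This is the same bookkeeping that already arises in Cartmell's proof for $\T_\emptyset$; adding $\tensor$ contributes one new, self-contained case whose only new input is strict stability. Indeed, the whole statement is an instance of the general principle that split comprehension categories equipped with a given collection of strictly stable operations are the models of a generalized algebraic theory, and that the term model of such a theory is its initial model; the present proposition is simply that principle specialized to the single operation $\tensor$.
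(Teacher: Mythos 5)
Your proposal is correct, and it is essentially the argument the paper relies on: the paper states this proposition without proof, treating it as an instance of the standard term-model initiality for generalized algebraic theories (citing Cartmell for the structural fragment and Streicher for a worked example of the pattern), and even flags in the preceding sentence that the load-bearing point is exactly the clause $(A \tensor B)[\sigma] := A[\sigma] \tensor B[\sigma]$ matching strict stability, which is the heart of your existence argument. The only bookkeeping caveat, which you already acknowledge, is that the recursion must be organized over typing derivations (or a partial interpretation of raw syntax) so that well-definedness modulo judgemental equality and the substitution invariant can be established by the usual mutual induction.
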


In other words, any split comprehension category $\CC$ with \stable{0}{strictly stable} $\tensor$-structure carries a canonical interpretation $\interp{-} : \CC_{\T_\tensor} \myto \CC$ of the syntax of $\T_\tensor$.
This justifies once again taking such comprehension categories as an algebraic definition of \myemph{models of $\T_\tensor$}.

Similarly, this correspondence extends to all the other usual constructors and rules.
Each new constructor corresponds to an extra operation, and each new judgemental equality rule to an algebraic axiom.
The syntactic category $\CC_\T$ is then initial among split comprehension categories with the appropriate \stable{0}{strictly stable} structure.
See \cite[Ch.~3, p.~181]{streicher:semantics-book} for a detailed treatment of the case of the Calculus of Constructions.

The task of modelling type theory thus amounts to the construction of split comprehension categories with suitable \stable{0}{strictly stable} algebraic structure.

Unfortunately, many constructions of models do not directly give this: they give comprehension categories with the appropriate algebraic structure, but they are not split, and the operations are not \stable{0}{strictly stable}.
This occurs particularly in abstract categorical constructions, such as the model of extensional type theory (ETT) in an arbitrary LCCC \cite{seely:lcccs,hofmann:lcccs}, or of intensional type theory (ITT) using a suitable weak factorization system \cite{warren-awodey:homotopy-theoretic-models}.
In such models, $\TT(\Gamma)$ usually consists of all maps into $\Gamma$ satisfying some property, or carrying some extra structure.
Reindexing is then given by pullback; but this is defined only up to canonical isomorphism, and so is only functorial up to isomorphism, not on the nose.
By the same token, the logical operations are typically characterized at most up to canonical isomorphism (and in some homotopy-theoretic models, only up to homotopy equivalence), and so are not automatically \stable{0}{strictly stable}.

In concrete models, a solution can often be found: an equivalent comprehension category, split, and with some \stable{0}{strictly stable} choice of structure.
For the model in $\sets$, for instance, one takes $\TT(\Gamma)$ not as $\sets/\Gamma$, but as the equivalent category $\sets^\Gamma$.
Related solutions exist for the (higher) groupoid models \cite{hofmann-streicher:groupoid-model,warren:thesis,warren:strict-w-groupoid-model}, presheaf models of ETT, and various other concrete examples.

However, for general categorical constructions, and \acmarxiv{for}{} some homotopy-theoretic models (e.g.\ $\ssets$), the problem remains: when can one replace a comprehension category, carrying some kind of logical structure, by an equivalent split one with \stable{0}{strictly stable} structure?
This is the \myemph{coherence problem} for dependent type theory.

In the remainder of this section, we survey existing results and lay out the setting for our own, taking the problem in two steps: first the splitness, then the \stable{0}{strict stability}.

\subsection{Split replacements}

The split replacement constructions for fibrations are classical, due to Giraud \acmarxiv{\citeyear[I, 2.4.3]{giraud:cohomologie-non-abelienne}}{\cite[I, 2.4.3]{giraud:cohomologie-non-abelienne}} and Bénabou \cite{streicher:benabou-splitting-email}.
Let $\Fib(\CC)$ denote the (1-)category of (cloven) fibrations and cartesian functors (not assumed to preserve the cleaving) over a fixed base $\CC$, and similarly $\SplFib_\clov(\CC)$ the category of split fibrations over $\CC$, with morphisms \myemph{cloven} functors, i.e.\ preserving the splitting on the nose.
Then the inclusion functor $i : \SplFib_\clov(\CC)\myto \Fib(\CC)$ possesses both left and right adjoints:
\begin{align*}
  \begin{tikzpicture}[auto]
    \node (Split) at (0,2) {$\SplFib_\clov(\CC)$};
    \node (Fib) at (0,0) {$\Fib(\CC)$.};
    \draw[->,bend right=40] (Fib) to node[mylabel,swap] {$(-)_{*}$}
    (Split);
    \draw[->,bend left=40] (Fib) to node[mylabel] {$(-)_{!}$} (Split);
    \draw[->] (Split) to node {$\adjoint$} node[swap] {$\adjoint$} (Fib);
  \end{tikzpicture}
\end{align*}
Explicitly, let $p : \TT \myto \CC$ be a cloven fibration.
An object of $\TT_*$ over $\Gamma \in \CC$ consists of an object $A$ of $\TT(\Gamma)$, together with for each $f : \Gamma' \myto \Gamma$ some cartesian lifting $\overline{f} : A_f \myto A$, such that $A_{1_\Gamma} = A$, $\overline{1_\Gamma} = 1_A$.
On the other hand, an object of $\TT_!$ over $\Gamma$ consists of objects $V \in \CC$, $E \in \TT(V)$, and a map $f : \Gamma \myto V$ (as discussed in detail in Section~\ref{sec:c-bang} below).

The unit and counit maps of these adjunctions are not in general isomorphisms.
However, considering $\Fib(\CC)$ as a 2-category (with 2-cells natural transformations over $\CC$), they are equivalences.

In other words, every fibration over a given base has two canonical split replacements. \\

Moreover, much of this situation lifts from fibrations to comprehension categories.

For the right adjoint $(-)_*$, this is described in \cite{hofmann:lcccs,curien-garner-hofmann}.
Given a comprehension category $\CC = (\CC,\TT,p,\chi)$, the comprehension $\chi_*$ on $\TT_*$ is given by sending an object-with-chosen-reindexings simply to the comprehension in $\CC$ of the object itself.

Similarly, the left adjoint lifts to a functor 
\[ (-)_! : \CompCat(\CC) \myto \SplCompCat(\CC), \]
described in full in Section~\ref{sec:c-bang} below, with the comprehension on $\TT_!$ sendings an object $f:\Gamma \myto V$, $A \in \TT(V)$ to $\chi(A[f])$.
This is no longer a left adjoint, however: the putative unit map $(\CC,\TT,p,\chi) \myto (\CC,\TT_!,p_!,\chi_!)$ will strictly preserve the comprehension just when $p$ is a \myemph{normal} fibration, i.e.\ when the cleaving lifts identity maps to identities.

We retain, however, the fact that the maps $\CC_* \myto \CC$ and $\CC \myto \CC_!$ are equivalences in suitable 2-categories.
%

In sum, we find that simply for bare comprehension categories, the coherence problem is satisfactorily solved: $(-)_*$ and $(-)_!$ provide two ways to replace an arbitrary comprehension category by an equivalent split one.

\subsection{Stability conditions} \label{sec:stability-conditions}

The real fun starts when one wishes to model a non-trivial type theory; that is, when one has some logical structure on the original comprehension category, and wishes to lift it to \stable{0}{strictly stable} structure on a split replacement.

It is reasonable to expect that \myemph{some} kind of stability condition will be needed for the operations of the original category.
We set out here a range of possible such conditions, from stronger ones which will lift more easily, to weaker ones which are satisfied more often in nature.

We do not give general definitions of them, for arbitrary logical operations: no appropriate generality of operations exists in the literature, and giving one is beyond the scope of this paper.
Instead, we define them here in the illustrative case of identity types; then in Section~\ref{sec:main-proof} below, we define them for other specific type-constructors as we require them.

Fix, for the remainder of this section, a comprehension category $\CC = (\CC,\TT,p,\chi)$.

\begin{definition} \label{def:weak-id-types}
  Given objects $\Gamma \in \CC$ and $A \in \TT(\Gamma)$, an \myemph{identity type for $A$} consists of:
  \begin{itemize}
  \item a type $\id[A] \in \TT(\Gamma.A.A)$\footnote{Strictly, $\TT(\Gamma.A.A[\chi(A)])$; here and elsewhere, we suppress such “weakenings”, i.e.\ reindexings along dependent projections, where they are unambiguous.};
  \item a factorization
    \begin{align*}
      \begin{tikzpicture}[auto]
        \node (L) at (0,1.5) {$\Gamma.A$};
        \node (R) at (3,1.5) {$\Gamma.A.A.\id[A]$};
        \node (M) at (1.5,0) {$\Gamma.A.A$};
        \draw[->] (L) to node[mylabel] {$\rr_{A}$} (R);
        \draw[->,bend right=15] (L) to node[mylabel,swap] {$\Delta$} (M);
        \draw[->,bend left=15] (R) to (M);
      \end{tikzpicture}
    \end{align*}
    of the diagonal map $\Delta_A : \Gamma.A \myto \Gamma.A.A$;
  \item for each $C\in\TT(\Gamma.A.A.\id[A])$ and $d:\Gamma.A\myto \Gamma.A.A.\id[A].C$ such that the outer square of
  \begin{align*}
    \begin{tikzpicture}[auto]
      \node (UL) at (0,1.75) {$\Gamma.A$};
      \node (BL) at (0,0) {$\Gamma.A.A.\id[A]$};
      \node (BR) at (3.3,0) {$\Gamma.A.A.\id[A]$};
      \node (UR) at (3.3,1.75) {$\Gamma.A.A.\id[A].C$};
      \draw[->] (UL) to node[mylabel] {$d$} (UR);
      \draw[->] (UL) to node[mylabel,swap] {$\rr_{A}$} (BL);
      \draw[->] (BL) to node[mylabel,swap] {$1_{\Gamma.A.A.\id[A]}$} (BR);
      \draw[->] (UR) to (BR);
      \draw[->,dashed] (BL) to node[mymidlabel] {$\jj_{A,C,d}$} (UR);
    \end{tikzpicture}
  \end{align*}
  commutes, a diagonal filler $\jj_{A,C,d}:\Gamma.A.A.\id[A]\myto C$ as indicated, making the resulting triangles commute.
  \end{itemize}

  A \myemph{choice of identity types on $\CC$} is a function giving, for each appropriate $\Gamma$, $A$ in $\CC$, an identity type $(\id[A],\rr_A,\jj_A)$ for $A$.
\end{definition}

To give a direct model of syntactic identity types, one requires stability conditions corresponding to the recursive definition of syntactic substitution:
\begin{definition}
  A choice of identity types on $\CC$ is \myemph{\stable{0}{strictly stable}} if for each $\sigma : \Gamma' \myto \Gamma$, and all appropriate $A$, $C$, $d$,
  \begin{align*}
    \id[A][\sigma] & = \id[{A[\sigma]}]\\
    \rr_{A}[\sigma] & = \rr_{A[\sigma]}\\
    \jj_{A,C,d}[\sigma] & = \jj_{A[\sigma],C[\sigma],d[\sigma]}.
  \end{align*}
\end{definition}

The most problematic aspect of this definition, categorically, is that it requires an on-the-nose equality of types.
Also, it does not necessarily respect isomorphism of types.
Making the minimal modification to allay these objections, we obtain:
\begin{definition}
  A choice of identity types on $\CC$ is \myemph{\stable{1}{(fully) pseudo-stable}} if it is equipped with a cartesian functorial action on cartesian maps.  That is, for each $\sigma : \Gamma' \myto \Gamma$, and cartesian map $\sigma_A : A' \myto A$ over $\sigma$, a cartesian map
  \[ \id[\sigma_A] : \id[A'] \myto \id[A] \]
over $\sigma.\sigma_A.\sigma_A : \Gamma'.A'.A' \myto \Gamma.A.A$, such that $\id[1_A] = 1_{\id[A]}$, $\id[\tau_A \comp \sigma_A] = \id[\tau_A] \comp \id[\sigma_A]$, and moreover commuting appropriately with values of $\rr$ and $\jj$.

(Full details of the ``commuting appropriately'' are spelled out in \cite[Def.\ 2.38]{warren:thesis}, where these are called \myemph{coherent identity types}; we omit them here, since our main results do not involve this condition.)
\end{definition}

This action may be seen as combining two parts: comparison isomorphisms $\id[{A[\sigma]}] \iso \id[A][\sigma]$ for the reindexings given by the cleaving, and functoriality in isomorphisms $A \iso B$ within each fiber $\TT(\Gamma)$.
An earlier version of this paper incorrectly defined pseudo-stability using just the first of these components.

In homotopy-theoretic models, the fillers $\jj$ are usually not specified, but given merely by an existence condition.
This suggests the further weakening:
\begin{definition}\label{def:stableid}
  A choice of partly-specified identity types (i.e.\ operations giving chosen $\id[A]$, $\rr_A$, such that there exist elimination fillers $\jj$ making these identity types), is \myemph{\stable{2}{partially pseudo-stable}} if it is equipped with a cartesian action of $\id$ on cartesian maps (as above), commuting with values $\rr$ (but not necessarily $\jj$).

(Again, see \cite[Def.\ 2.33]{warren:thesis} for full details; these are the \myemph{stable identity types} there.)
\end{definition}

A more categorically familiar property is the \myemph{Beck--Chevalley} condition:
\begin{definition}\label{def:beck-chevalley}
  Say a choice of identity types on $\CC$ satisfies the \myemph{Beck--Chevalley condition} if for each $\sigma : \Gamma' \myto \Gamma$ and $A \in \TT(\Gamma)$, the canonical map
  \[ \jj_{A[\sigma],\id[A][\sigma],\rr_A[\sigma]} : \id[{A[\sigma]}] \myto \id[A][\sigma] \]
  is an isomorphism.
  This depends on these particular fillers, but not on other values of $\jj$.
\end{definition}

Our final condition is a pure existence condition on $\CC$, not dependent at all on a choice of identity types:
\begin{definition}\label{def:weakly-stable}
  Given $\Gamma$ in $\CC$ and $A$ in $\TT(\Gamma)$, a \myemph{\stable{3}{weakly stable} identity type} for $A$ is a pair $(\id,\rr)$ as above such that, for all $\sigma:\Gamma' \myto \Gamma$, there is some $\jj$ making $(\id[A][\sigma],\rr_{A}[\sigma],\jj)$ an identity type for $A[\sigma]$.

  Say that $\CC$ \myemph{has \stable{3}{weakly stable} identity types} if for every $\Gamma$, $A$, there is some \stable{3}{weakly stable} identity type $(\id,\rr)$.
\end{definition}

The conditions above may be usefully compared in terms of their impliciations for the Beck--Chevalley maps $ \jj_{A[\sigma],\id[A][\sigma],\rr_A[\sigma]} : \id[{A[\sigma]}] \myto \id[A][\sigma]$, and for the stability of values of $\jj$ modulo these maps.

\begin{center}
  \begin{tabular}{l|cc@{\quad~\quad}c}
    choice of $\id$-types & $(\id,\rr)$ & $\jj$\\
    \hline
    \emph{\stable{0}{strictly stable}} & $=$        & $=$ \\
    \emph{\stable{1}{pseudo-stable}}   & $\iso$     & $=$ \\
    \emph{\stable{2}{partially pseudo-stable}}     & $\iso$     & $\homot$ \\
    \emph{\stable{3}{weakly stable}}     & $\myequiv$ & $\homot$ \\
    \emph{arbitrary}         &            &  \\
  \end{tabular}
\end{center}
Here $\homot$ denotes homotopy (pointwise propositional equality), and $\myequiv$ homotopy equivalence, in the sense of \cite[Ch.~4]{hott:book}.

Analogous definitions of these levels of \stable{other}{stability} may be made for the other usual type and term constructors; as mentioned already, we will define these in full as they are required, in Sec.~\ref{sec:main-proof} below.
Briefly, they are obtained for inductive types ($+$, $\Sigma$, $1$, etc.)\ by replacing $(\id,\rr)$ in the definitions above by the type former in question and its constructors, and replacing $\jj$ by the eliminator; and for $\Pi$-types, by replacing $(\id,\rr)$ by the $\Pi$-type and its application map, and $\jj$ by the $\lambda$-abstraction operation. \\

\paragraph{Existence conditions and the axiom of choice}

The various existence conditions above---in particular, \stable{3}{weak stability}---may each be interpreted in two ways: classically, as mere existence, or according to the constructive tradition, with each forall--exists statement witnessed by some function (but with no conditions on the function assumed).

Assuming the axiom of choice, the two are of course equivalent.
In the absence of AC, however, the witnessed form is stronger, and is the form required for the results of Section~\ref{sec:main-thm} below.
(Compare the use of \myemph{cloven} fibrations in the definition of comprehension categories.)
We will for the most part elide this distinction; where necessary, we will speak of \myemph{witnessed \stable{3}{weakly stable} identity types}, and the like.

\subsection{Lifting logical structure}

Equipped with these definitions, we can now state when logical structure lifts from $\CC$ to its strict replacements $\CC_*$, $\CC_!$.

For the right-handed strictification $\CC_*$, the known results require either restrictions on the type theory in question, or strong stability conditions.

\begin{theorem}[\acmarxiv
  {{\cite[Thm.\ 2,4]{hofmann:lcccs}}, \cite{curien-garner-hofmann}}
  {Hofmann {\cite[Thm.\ 2,4]{hofmann:lcccs}}, analysed further in \cite{curien-garner-hofmann}}]
  \label{thm:hofmann-c-star}
Suppose $\CC$ is a comprehension category, equipped with structure corresponding to the logical constructions of \myemph{extensional} Martin-L\"of type theory, including in particular the reflection rule for identity types\footnote{That is, for each $A \in \TT(\Gamma)$, there is a type $\id[A]$ over $\Gamma.A.A$ whose comprehension is isomorphic over $\Gamma.A.A$ to the diagonal map $\Gamma.A \myto \Gamma.A.A$.}, and all satisfying the appropriate Beck--Chevalley conditions.

Then $\CC_*$ may be equipped with \stable{0}{strictly stable} $\Pi$-structure, $\Sigma$-structure, etc.

In particular, if $\EE$ is a locally cartesian closed category, then $(\EE,\EE^\arr,\cod,1)_*$ is a model of extensional type theory.
\end{theorem}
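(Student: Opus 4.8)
The plan is to transport each piece of logical structure from $\CC$ to $\CC_*$ along the canonical equivalence $\CC_* \myto \CC$, exploiting the concrete description of $\TT_*$ recalled above: an object of $\TT_*(\Gamma)$ is an object $A \in \TT(\Gamma)$ equipped with a normalized choice of cartesian lift $\overline f : A_f \myto A$ for \emph{every} $f : \Gamma' \myto \Gamma$ (with $A_{1_\Gamma} = A$, $\overline{1_\Gamma} = 1_A$), and reindexing of such an object along $\sigma : \Delta \myto \Gamma$ is $(A,\{A_f\}) \mapsto (A_\sigma, \{A_{\sigma h}\}_h)$, which merely reshuffles the index family by precomposition and is therefore \emph{strictly} functorial. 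Since $\chi_*$ is just $\chi$ of the underlying object, the whole problem reduces to: for each constructor, produce the underlying $\CC$-object together with its chosen reindexing family by a \emph{uniform formula}, so that (i) the family genuinely consists of cartesian lifts, and (ii) the formula is manifestly invariant under the precomposition that implements reindexing in $\CC_*$.

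First I would fix the recipe. Given, say, $\Sigma$-structure on $\CC$ and starred objects $\hat A = (A,\{A_f\})$ over $\Gamma$, $\hat B = (B,\{B_g\})$ over $\Gamma.A$, I set the underlying object of $\Sigma_{\hat A}\hat B$ to be $\Sigma_A B \in \TT(\Gamma)$, and \emph{define} its reindexing along $f : \Gamma' \myto \Gamma$ to be $\Sigma_{A_f}B'$, where $B'$ is the lift of $B$ chosen by $\hat B$ along $\chi(\overline f) : \Gamma'.A_f \myto \Gamma.A$. The same pattern—apply the $\CC$-constructor to the arguments' chosen reindexings, along the appropriately comprehended maps—gives the recipe for $\Pi$, $+$, $\W$, and for the identity types $\id$, as well as for the introduction and elimination data ($\rr$, $\jj$, pairing, etc.), which transport as the corresponding sections of the underlying $\CC$-objects.

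The key step, where the hypotheses do their work, is verifying (i): that the comparison map $\Sigma_{A_f}B' \myto \Sigma_A B$ is a cartesian lift of $f$. This is exactly the content of Beck--Chevalley. The canonical map $(\Sigma_A B)[f] \myto \Sigma_A B$ is cartesian over $f$; Beck--Chevalley identifies its domain $(\Sigma_A B)[f]$ with $\Sigma_{A[f]}B[\ldots]$, and the chosen cartesian maps of $\hat A,\hat B$ identify that in turn with $\Sigma_{A_f}B'$, so the composite $\Sigma_{A_f}B' \myto \Sigma_A B$ is cartesian as required. This is precisely where \emph{extensionality} becomes essential: the argument needs each constructor to be determined up to \emph{canonical isomorphism}, so that the Beck--Chevalley maps are genuine isomorphisms rather than mere equivalences. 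For $\Sigma$ and $\Pi$ this holds automatically; for identity types it holds precisely because the reflection rule forces $\id[A]$ to be characterized up to isomorphism (its comprehension isomorphic to the diagonal). In intensional type theory $\id$ is pinned down only up to homotopy equivalence, the comparison maps become mere equivalences, and the recipe fails to yield cartesian lifts—exactly the gap that the remainder of this paper is designed to fill.

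Property (ii), strict stability, should then follow from bookkeeping: reindexing $\Sigma_{\hat A}\hat B$ along $\sigma$ replaces the index family $\{f\}$ by $\{\sigma h\}$, so its underlying object becomes $\Sigma_{A_\sigma}B''$, computed by the very same formula as the underlying object of $\Sigma_{\hat A[\sigma]}(\hat B[\ldots])$, with agreement on the nose coming from strict associativity and unitality of composition in $\CC$. The main obstacle is thus not any single deep step but the coherent bookkeeping of (i) across all constructors at once—keeping the normalizations and the chosen Beck--Chevalley witnesses mutually compatible and verifying that the transported introduction/elimination data retain their universal properties. Finally, for the corollary I would observe that $(\EE,\EE^\arr,\cod,1)$ is the codomain comprehension category of an LCCC: it is full, with $\Sigma_f$ given by composition, $\Pi_f$ by the right adjoint to pullback, and $\id$ by the diagonal (so reflection holds), all the requisite Beck--Chevalley conditions being the standard ones for the adjoint string $\Sigma_f \adjoint f^* \adjoint \Pi_f$ in a locally cartesian closed category; the general statement then applies, exhibiting $(\EE,\EE^\arr,\cod,1)_*$ as a model of extensional type theory.
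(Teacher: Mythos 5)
Your overall strategy---transport each constructor to $\CC_*$ by applying the $\CC$-level operation to the arguments' chosen reindexings, use Beck--Chevalley to see that the resulting comparison maps are cartesian, and get strict stability of the type formers from the fact that the reindexing families are given by a formula invariant under precomposition of the index---is essentially the route the paper takes: it factors the theorem into (a) pseudo-stable structure lifts to strictly stable structure on $\CC_*$ (following Hofmann), and (b) extensional structure satisfying Beck--Chevalley is pseudo-stable. The type-former half of your argument is correct, including the observation that agreement on the nose comes from associativity of composition and functoriality of $\chi$.

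The gap is in the term-level data. You transport $\jj$, $\lambda$-abstraction, copairing, etc.\ ``as the corresponding sections'' and defer their strict stability to ``bookkeeping,'' but this is precisely the step that is \emph{not} bookkeeping and on which the whole coherence problem turns. Strict stability demands, e.g., $\jj_{A,C,d}[\sigma] = \jj_{A[\sigma],C[\sigma],d[\sigma]}$, where the right-hand side is whatever elimination section the original structure supplies over $\Gamma'$; nothing in your formula-invariance argument forces these to agree, because eliminators are not produced by a formula in their inputs---they are arbitrary choices witnessing an existence condition, and a priori they need not commute with substitution or with the Beck--Chevalley isomorphisms. (This is exactly why the $\CC_*$ construction fails for intensional theories.) What rescues the extensional case is not that the type formers are determined up to isomorphism---you place the weight of extensionality there, but the Beck--Chevalley condition is already a hypothesis of the theorem---rather, it is that the reflection rule makes every eliminator the \emph{unique} section satisfying its computation rule; uniqueness is what forces the transported eliminators to commute on the nose with the chosen cartesian lifts and closes the strict-stability equations for the term formers. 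Your proof needs this uniqueness argument (the content of the paper's second lemma following the theorem statement) stated and used explicitly; without it the elimination clauses of strict stability do not follow.
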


With the terminology above, this may be read as factoring into two lemmas.
The first characterizes when logical structure (not just that corresponding to ETT) lifts to $\CC_*$.

\begin{lemma} \label{lem:lifting-to-c-star}
Suppose $\CC$ is a comprehension category, equipped with \stable{1}{pseudo-stable} $\id$-types (resp.\ $\Pi$-types, $+$-types, etc.).

Then $\CC_*$ carries \stable{0}{strictly stable} $\id$-types ($\Pi$-types, $+$-types, etc.).
\end{lemma}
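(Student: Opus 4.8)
The plan is to define the strictly stable $\id$-structure on $\CC_*$ by transporting the given pseudo-stable $\id$-structure on $\CC$ through the chosen-reindexing systems that constitute objects of $\TT_*$. Recall that an object $\mathbf{A} \in \TT_*(\Gamma)$ consists of an underlying type $A \in \TT(\Gamma)$ together with a normalized family of cartesian lifts $\overline{f} : A_f \myto A$, one for each $f : \Gamma' \myto \Gamma$; that reindexing along $\sigma$ simply restricts this family (and is thereby strictly functorial); and that the comprehension is inherited, $\chi_*(\mathbf{A}) = \chi(A)$, so that context extensions in $\CC_*$ have the same underlying objects as in $\CC$, up to the canonical cartesian comparison between chosen and cleaving reindexings. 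Accordingly I would take the underlying type of $\id[\mathbf{A}]$ to be $\id[A]$, and $\rr_{\mathbf{A}}$, $\jj_{\mathbf{A},\mathbf{C},\mathbf{d}}$ to be $\rr_A$, $\jj_{A,C,d}$ on underlying data; the real content is to equip $\id[\mathbf{A}]$ with its own normalized, strictly functorial family of chosen lifts.

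This is where pseudo-stability enters. Its functorial cartesian action sends each chosen lift $\overline{f} : A_f \myto A$ supplied by $\mathbf{A}$ to a cartesian map $\id[\overline{f}] : \id[A_f] \myto \id[A]$ over $f.\overline{f}.\overline{f}$; combining this with the cleaving of $\CC$ along the residual ``section'' data of a map into the doubled context $\Gamma.\mathbf{A}.\mathbf{A}$ produces, for every map into that context, a cartesian lift of $\id[A]$. I would declare these to be the chosen lifts of $\id[\mathbf{A}]$. The axioms $\id[1_A] = 1_{\id[A]}$ and $\id[\tau_A \comp \sigma_A] = \id[\tau_A] \comp \id[\sigma_A]$ are then exactly what guarantees that this family is normalized and strictly functorial in its first component, so that $\id[\mathbf{A}]$ is a genuine object of $\TT_*$ rather than merely a type-with-lifts.

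Strict stability then falls out almost formally: reindexing $\id[\mathbf{A}]$ along $\sigma : \Gamma' \myto \Gamma$ in $\CC_*$ is defined by restricting its chosen-lift family, and by construction the restricted family is exactly the one defining $\id[\mathbf{A}[\sigma]]$, so that $\id[\mathbf{A}][\sigma] = \id[\mathbf{A}[\sigma]]$ on the nose; the equalities $\rr_{\mathbf{A}}[\sigma] = \rr_{\mathbf{A}[\sigma]}$ and $\jj_{\mathbf{A},\mathbf{C},\mathbf{d}}[\sigma] = \jj_{\mathbf{A}[\sigma],\ldots}$ follow similarly, using the compatibility of the pseudo-stable action with the values of $\rr$ and $\jj$. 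That $(\id[\mathbf{A}], \rr_{\mathbf{A}}, \jj_{\mathbf{A},\mathbf{C},\mathbf{d}})$ really is an identity type in $\CC_*$ — that the diagonal filler has the required universal property — holds because it holds for the underlying data in $\CC$ and the comprehension of $\CC_*$ is inherited from $\CC$.

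The main obstacle is the verification in the second paragraph: that the transported family of chosen lifts is strictly functorial and normalized, so that it defines a legitimate $\TT_*$-object, and that fillers transported along different but composable maps agree on the nose. This is precisely the step that consumes the full strength of pseudo-stability — the functorial cartesian action, including functoriality in fiberwise isomorphisms — rather than merely a family of comparison isomorphisms $\id[{A[\sigma]}] \iso \id[A][\sigma]$; indeed it is exactly the coherence that the weaker, comparison-iso-only notion fails to supply (cf.\ the remark on the earlier incorrect definition of pseudo-stability). The treatment of the other constructors is entirely parallel, obtained by replacing $(\id,\rr)$ by the relevant type-former and its constructors and $\jj$ by the corresponding eliminator (or, for $\Pi$-types, by $\lambda$-abstraction), and reduces to the same bookkeeping.
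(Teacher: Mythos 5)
Your proposal is correct and is essentially the paper's own argument: the paper's proof of this lemma is a one-line deferral to Hofmann's Theorem~2, and your transport of the pseudo-stable functorial cartesian action through the chosen-lift families constituting $\TT_*$-objects is exactly that construction. One small quibble: an object of $\TT_*$ requires only a \emph{normalized} family of cartesian lifts (no functoriality condition), so the strict functoriality you verify in your second paragraph is not needed for $\id[\mathbf{A}]$ to be a legitimate object of $\TT_*$ --- it is, however, precisely what makes the strict-stability equations $\id[\mathbf{A}][\sigma] = \id[\mathbf{A}[\sigma]]$ hold on the nose, so the verification is in the right place even if its stated purpose is slightly off.
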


\begin{proof}
Just as in the proof of \cite[Thm.\ 2]{hofmann:lcccs}
\end{proof}

The second shows why for extensional type theory, and similar theories, \stable{1}{pseudo-stability} is a very reasonable condition to expect.

\begin{lemma}
Suppose $\CC$ is a comprehension category, with identity types satisfying the reflection rule, and with $\Pi$-types (resp.\ $+$-types, etc.), all \stable{3}{weakly stable} (or, \emph{a fortiori}, satisfying the Beck--Chevalley condition), and in the case of $\Pi$-types, satisfying the $\eta$-rule.

Then all this structure is in fact \stable{1}{pseudo-stable}, with comparison isomorphisms corresponding to the Beck--Chevalley maps.
\end{lemma}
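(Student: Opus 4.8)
The plan is to exploit the fact that the reflection rule (for identity types) and the $\eta$-rule (for $\Pi$-types) upgrade the structure in question from being pinned down merely up to homotopy equivalence to being determined up to a \emph{unique} isomorphism by an honest universal property. Pseudo-stability, and in particular the assertion that the comparison isomorphisms coincide with the Beck--Chevalley maps, will then fall straight out of this rigidity, so that the proof consists of one genuine lemma followed by bookkeeping.

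First I would establish the rigidity lemma for identity types. Under the reflection rule, the footnoted condition says $\chi(\id[A])$ is isomorphic over $\Gamma.A.A$ to the diagonal $\Delta_A$; write $\phi$ for this isomorphism. Since $\Delta_A$ is a section of a projection $\Gamma.A.A \myto \Gamma.A$, it is monic, and the factorization triangle $\chi(\id[A]) \comp \rr_A = \Delta_A$ forces $\phi \comp \rr_A = 1_{\Gamma.A}$; hence $\rr_A = \phi^{-1}$ is an isomorphism. It follows that the eliminator is uniquely determined, $\jj_{A,C,d} = d \comp \rr_A^{-1}$, and that any two identity types $(\id,\rr)$, $(\id',\rr')$ for the same $A$ carry a unique isomorphism $\id \iso \id'$ commuting with $\rr,\rr'$, and automatically with all values of $\jj$.

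Next I would check that reflection transfers to reindexings. Because $\chi$ sends cartesian arrows to pullback squares and diagonals are stable under pullback, $\chi(\id[A][\sigma])$ is again isomorphic to $\Delta_{A[\sigma]}$; so $\id[A][\sigma]$ is an identity type for $A[\sigma]$ satisfying reflection, and $\rr_A[\sigma]$ is an isomorphism. Now weak stability supplies \emph{some} filler $\jj$ making $(\id[A][\sigma],\rr_A[\sigma],\jj)$ an identity type for $A[\sigma]$; comparing it with the chosen $(\id[{A[\sigma]}],\rr_{A[\sigma]})$ through the rigidity lemma yields a canonical isomorphism $\id[{A[\sigma]}] \iso \id[A][\sigma]$, which is exactly the Beck--Chevalley map $\jj_{A[\sigma],\id[A][\sigma],\rr_A[\sigma]}$. (If one instead assumes the Beck--Chevalley condition outright, this map is an isomorphism by hypothesis and this step is immediate.) This furnishes the comparison-isomorphism half of the pseudo-stable action; the fibrewise half is obtained identically, transporting a vertical isomorphism $A \iso B$ through the rigidity lemma.

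Finally I would assemble these into the full cartesian functorial action, defining $\id[\sigma_A]$ as the composite of the cleaving reindexing cartesian map with the comparison isomorphism and with the vertical comparison obtained by factoring an arbitrary cartesian $\sigma_A$ through the cleaving, and then verifying the axioms $\id[1_A] = 1_{\id[A]}$, $\id[\tau_A \comp \sigma_A] = \id[\tau_A] \comp \id[\sigma_A]$, and compatibility with $\rr$ and $\jj$. I expect the main obstacle to be purely organizational: every such compatibility is \emph{forced} by the uniqueness clause of the rigidity lemma, so the real work lies in arranging the index manipulations (weakenings, the decomposition of cartesian maps) so that uniqueness applies cleanly, rather than in proving anything substantive. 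The $\Pi$-type case runs in exact parallel, with the $\eta$-rule in place of reflection: it makes $\Pi_A$ a genuine right adjoint to weakening along $\chi(A)$, hence determined up to unique isomorphism by its universal property, so weak stability again forces the Beck--Chevalley comparison to be an isomorphism and the remaining coherence to commute; the other connectives whose extensional forms likewise yield a universal property are handled the same way.
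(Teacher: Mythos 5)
Your proposal is correct and follows essentially the same route as the paper's: use the extensionality hypotheses to pin each piece of structure down by a genuine universal property, so that the eliminators/mediating maps are unique, and then observe that \stable{3}{weak stability} forces the Beck--Chevalley maps, as comparison maps between objects sharing that universal property, to be isomorphisms satisfying the required coherences. Your treatment of the identity-type case is in fact more explicit than the paper's own three-sentence sketch: showing that reflection makes $\rr_A$ invertible, whence $\jj_{A,C,d} = d \comp \rr_A^{-1}$ is forced, is a clean way to get the uniqueness the paper asserts, and the $\Pi$-case via $\eta$ matches the paper exactly.

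The one place your sketch under-delivers is the closing clause about ``the other connectives whose extensional forms likewise yield a universal property.'' For $+$-types (and the other inductive types hidden in the ``etc.''), the lemma assumes no $\eta$-rule, so their own rules do \emph{not} yield a universal property, and your rigidity template has nothing to bite on directly. The uniqueness of the copairing $\copair{t_1}{t_2}$ must instead be derived by the standard extensional-type-theory argument \emph{using the identity types with reflection}: given two candidate copairings agreeing with the $t_i$ on the inclusions, form the identity type between them over $\Gamma.(A_1+A_2)$, produce a section of it by $+$-elimination from the reflexivity sections over each summand, and conclude on-the-nose equality by reflection. This cross-use of the identity types is precisely what the paper's proof foregrounds (``using the identity types with the reflection rule, one may show that the maps produced by eliminators of inductive types \ldots are unique''), and it is the reason the reflection rule appears as a hypothesis even in the clauses of the lemma that concern constructors other than $\id$. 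Adding that step makes your argument complete.
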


\begin{proof}Using the identity types with the reflection rule, one may show that the maps produced by eliminators of inductive types (e.g.\ $\jj$), or by $\lambda$-abstraction for $\Pi$-types with $\eta$, are unique; e.g.\ given identity-elimination data $\Gamma$, $A$, $C$, $d$, and an $\id$-type $(\id[A],\rr_A,\jj_A)$  for $A$, then any section $f$ of $\chi(C)$ with $f \comp \rr_A = d$ must be equal to $\jj_{A,C,d}$.
From this, it follows that the various type formers have categorical universal properties, either as certain initial algebras or as exponential objects.
\stable{3}{Weak stability} then implies that the Beck--Chevalley maps, as algebra maps between two initial/terminal objects, are isomorphisms, and satisfy the appropriate axioms to witness that the logical structure is \stable{1}{pseudo-stable}.
\end{proof}

For theories such as intensional Martin-L\"of type theory, however, \stable{1}{pseudo-stability} is often difficult to obtain.

Most obviously, type constructors such as $\id$-types, or sum types without $\eta$-rules, are not automatically determined up to isomorphism, only up to equivalence.
This is not often an obstacle in practice, though, since the specific constructions used usually are stable up to coherent isomorphism after all.

More problematically, however, the $\id$-elimination operation (and some other inductive eliminators) is not canonically determined, but merely given by existence conditions, so does not commute with substitution and the coherence isomorphisms; and even in concrete cases (e.g.\ $\ssets$), it may not be clear how to make choices that do \cite[\textsection 2.3]{warren:thesis}.

In the terminology from above, identity types in comprehension categories coming from homotopy-theoretic models are usually \stable{2}{partially pseudo-stable}, but are often not \stable{1}{fully so}.
It is not possible, therefore, to apply Lemma~\ref{lem:lifting-to-c-star} to obtain strictifications of such models.

Theorem~\ref{thm:main-thm} and Heuristic~\ref{heuristic:further-rules} below resolve this situation, stating that for lifting logical structure to the \myemph{left-handed} strictification $\CC_!$, only \stable{3}{weak stability} is required, provided that certain products and exponentials exist in the base category $\CC$.

The next section is devoted to the full statement and proof of this theorem.

\section{The local universes model} \label{sec:main-thm}

\subsection{The comprehension category \pdfCbang} \label{sec:c-bang}

Throughout this section, assume given a fixed comprehension category $\CC = (\CC,\TT,p,\chi)$.

\begin{definition}
  Define the new comprehension category $\CC_! = (\CC_!,\TT_!,p,\chi_!)$ as follows:
  \begin{description}
  \item[Base category] We set $\CC_{!} := \CC$; the base category does not change.
  \item[Types] An object of $\TT_{!}$ over $\Gamma \in \CC$ consists of a tuple $(\baseof{A},\topof{A},\nameof{A})$, where $\baseof{A}$ is an object of $\CC$, $\topof{A}$ an object of $\TT(\baseof{A})$, and $\nameof{A}$ an arrow $\Gamma \myto \baseof{A}$ in $\CC$.
    One may view this diagrammatically as follows:
    \begin{align*}
      \begin{tikzpicture}[auto]
        \node (G) at (0,0) {$\Gamma$};
        \node (base) at (2,0) {$\baseof{A}.$};
        \node (top) at (2,1.5) {$\topof{A}$};
        \draw[->] (G) to node[mylabel] {$\nameof{A}$} (base);
        \draw[dotted] (top) to (base);
      \end{tikzpicture}
    \end{align*}
    Given such an object, write $[A]$ for its reindexing $\topof{A}[\nameof{A}]$ in $\TT(\Gamma)$.

    An arrow $(\baseof{B},\topof{B},\nameof{B}) \myto (\baseof{A},\topof{A},\nameof{A})$ in $\TT_{!}$ over $\sigma : \Delta \myto \Gamma$ is just a map $[B] \myto[] [A]$ over $\sigma$ in $\TT$:
    \begin{align*}
      \begin{tikzpicture}[auto]
        \node (B) at (0,1.5) {$[B]$};
        \node (A) at (2,1.5) {$[A]$};
        \node (D) at (0,0) {$\Delta$};
        \node (G) at (2,0) {$\Gamma$};
        \draw[->] (D) to node[mylabel] {$\sigma$} (G);
        \draw[->] (B) to (A);
        \draw[dotted] (B) to (D);
        \draw[dotted] (A) to (G);
      \end{tikzpicture}
    \end{align*}

    Together, this gives the category $\TT_!$ together with a projection $p_! : \TT_! \myto \CC_!$.

  \item[Reindexing]
    Cartesian lifts for $p_!$ are given as follows.
    Let $\sigma:\Delta \myto \Gamma$ be an arrow in $\CC$, and $A = (\baseof{A},\topof{A},\nameof{A})$ an object of $\TT_{!}(\Gamma)$.
    Set
    \begin{align*}
      A[\sigma] & := (\baseof{A},\topof{A},\nameof{A} \comp \sigma)
    \end{align*}
    and take the map $A_{\sigma} : A[\sigma] \myto A$ over $\sigma$ to be the canonical map $[A[\sigma]] \myto[] [A]$ over $\sigma$ in $\TT$ given by the cartesianness of $(\topof{A})_{\nameof{A}}$ for $p$:
    \begin{align*}
      \begin{tikzpicture}[auto]
        \node (As) at (0,1.5) {$[A[\sigma]]$};
        \node (D) at (0,0) {$\Delta$};
        \node (A) at (2,.75) {$[A]$};
        \node (G) at (2,-.75) {$\Gamma$};
        \node (Nt) at (6,1.5) {$\topof{A}$};
        \node (N) at (6,0) {$\baseof{A}$};
        \draw[->,bend left=10] (As) to node[mylabel]
        {$(\topof{A})_{\nameof{A}\comp f}$} (Nt);
        \draw[->] (D) to (N);
        \draw[->,bend right=5] (A) to node[swap,mylabel]
        {$(\topof{A})_{\nameof{A}}$} (Nt);
        \draw[->,bend right=5] (G) to node[mylabel,swap]
        {$\nameof{A}$} (N);
        \draw[->] (D) to node[mylabel,swap] {$\sigma$} (G);
        \draw[->,dashed] (As) to node[mylabel,swap] {$A_{\sigma}$}
        (A);
        \draw[dotted] (As) to (D);
        \draw[dotted,preaction={draw=white, -,line width=6pt}] (A) to (G);
        \draw[dotted] (Nt) to (N);
      \end{tikzpicture}
    \end{align*}
    This is straightforwardly seen to make $p_{!}$ a \myemph{split} fibration.
  \item[Comprehension]
    Given an object $(\baseof{A},\topof{A},\nameof{A})$ of $\TT_{!}(\Gamma)$, take 
    \begin{align*}
      \chi_{!}(\baseof{A},\topof{A},\nameof{A}) & := \chi([A]).
    \end{align*}
    \noindent Cartesianness of the functor $\chi_!$ follows directly from that of $\chi$.
  \end{description}
\end{definition}

Intuitively, we think of $\baseof{A}$ together with $\topof{A}$ as a kind of ``local universe''.
(By abuse of notation, we often refer to the pair $(\baseof{A},\topof{A})$ just as $\baseof{A}$, leaving $\topof{A}$ understood.)
Following this intuition, the map $\nameof{A}:\Gamma \myto \baseof{A}$ picks out the actual type family $[A]$ from the local universe $\baseof{A}$. \\

In general, $\CC_{!}$ may not support the interpretation of any interesting constructors, even when $\CC$ does.
However, provided that the underlying category $\CC$ comes equipped with a modest amount of additional structure it will be possible to lift the interpretations of various constructors from $\CC$ to $\CC_{!}$.
We first recall some definitions.

\begin{definition} \label{def:cat-exponential}
If $Z \myto[g] Y \myto[f] X$ are maps in a category $\CC$ such that all pullbacks of $f$ exist, a \myemph{(categorical, dependent) exponential} for $f$ and $g$ is an object $\prod[f,g]$ of $\CC/X$ together with a natural isomorphism 
\[\CC/X(W, \prod[f,g]) \iso \CC/Y(W \times_X Y, Z),\]
for all $h : W \myto X$.

A map $f : Y \myto X$ possessing all pullbacks is \myemph{(dependently) exponentiable} if for every $g : Z \myto Y$, some dependent product $\prod[f,g]$ exists; equivalently, if the pullback functor $f^* : \CC/X \myto \CC/Y$ has a right adjoint.
\end{definition}

Then the precise ambient hypothesis required, for lifting structure to $\CC_!$, is as follows (named by analogy with the logical framework presentation):
\begin{definition} \label{def:LF}
Say that $\CC$ satisfies \myemph{condition \LF} if its underlying category has finite products, and given maps $Z \myto[g] Y \myto[f] X$, if $f$ is a display map and $g$ is either a display map or a product projection, then a dependent exponential $\prod[f,g]$ exists.
\end{definition}

\begin{proposition}
Each of the following (simpler) conditions implies \LF:
\acmarxiv{\begin{describe}{\emph{LCCC}}}{\begin{description}}
\item[\emph{(LFa)}] \label{cond:lfa} $\CC$ has finite products; and every display map is exponentiable.
\item[\emph{(LFb)}] \label{cond:lfb} Every map $X \myto 1$ is a display map; and for each $A \in \TT(\Gamma)$, the reindexing functor $\chi(A)^* : \TT(\Gamma) \myto \TT(\Gamma.A)$ has a right adjoint.
\item[\emph{(LCCC)}] $\CC$ is locally Cartesian closed. \qed
\acmarxiv{\end{describe}}{\end{description}}
\end{proposition}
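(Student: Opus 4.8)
The plan is to prove the three implications separately, noting that only the last, (LFb) $\Imp$ \LF, carries any real content. For (LCCC) $\Imp$ (LFa): a locally Cartesian closed category has all finite limits, in particular finite products, and every map is exponentiable, so \emph{a fortiori} every display map is; this is (LFa). For (LFa) $\Imp$ \LF: in any comprehension category the reindexing squares are pullbacks, so display maps are stable under pullback and hence possess all their pullbacks; thus the Definition of exponentiable applies to a display map $f$ and states precisely that $\prod[f,g]$ exists for \emph{every} $g : Z \myto Y$, in particular for $g$ a display map or a product projection. Finite products are hypothesised in both conditions, so \LF\ follows directly.

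The substantive case is (LFb) $\Imp$ \LF. First I would dispose of the ambient data. Since every map $X \myto 1$ is a display map and display maps are pullback-stable, the pullback of $X \myto 1$ along $Y \myto 1$ exists, giving $X \times Y$; hence finite products exist. A product projection $Y \times U \myto Y$ is the pullback of $U \myto 1$ along $Y \myto 1$, so is itself a display map; this reduces the ``$g$ a product projection'' clause of \LF\ to the ``$g$ a display map'' clause. Throughout I use fullness, so that $\TT(\Gamma) \simeq \CC\fibslice\Gamma$, and the observation that each fibre $\TT(\Gamma)$ has a terminal object $\mathbf{1}_\Gamma$ (namely $1_\Gamma$, the empty composite of comprehensions), so that sections of a display map correspond to maps out of the unit type.

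Next I construct the exponential. For $f = \chi(A)$ a single comprehension and $g = \chi(B)$ a display map, the hypothesis of (LFb) supplies a right adjoint $\Pi_A$ to $\chi(A)^* : \TT(\Gamma) \myto \TT(\Gamma.A)$; I set $\prod[f,g] := \chi(\Pi_A B)$, a display map over $\Gamma$, and handle composite display maps $f$ by iterating (right adjoints compose). It then remains to verify the universal property of the Definition of categorical exponential against an \emph{arbitrary} $h : W \myto \Gamma$. Computing both Hom-sets as sets of sections over $W$, and applying the adjunction over the variable base $W$ (again furnished by (LFb), now for $A[h] \in \TT(W)$), the desired bijection $\CC/\Gamma(W, \chi(\Pi_A B)) \iso \CC/(\Gamma.A)(f^\ast W, \chi(B))$ reduces to the Beck--Chevalley isomorphism $(\Pi_A B)[h] \iso \Pi_{A[h]}(B[h.A])$.

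The main obstacle is exactly this Beck--Chevalley step: (LFb) provides the right adjoints only pointwise, one per base, and asserts nothing about their compatibility with reindexing, whereas testing the universal property against non-display objects $W$ forces precisely such compatibility. I would resolve it by the mate calculus. On each fibre, alongside $\Pi_A$ one has the left adjoint $\Sigma_A \dashv \chi(A)^\ast$, given by post-composition with the display map $\chi(A)$ and valued in display maps; for the reindexing pullback square over $h : W \myto \Gamma$ the Beck--Chevalley isomorphism for these $\Sigma$'s is automatic, being just the pasting of pullback squares. Taking mates across the string $\Sigma_A \dashv \chi(A)^\ast \dashv \Pi_A$ then transports this automatic isomorphism into the required Beck--Chevalley isomorphism for $\Pi$. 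With it in hand the bijections of the previous paragraph are natural in $(W,h)$, so $\chi(\Pi_A B)$ has the claimed universal property and \LF\ holds.
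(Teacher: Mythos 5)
The paper offers no proof of this proposition: it is stated with a terminal \qed, all three implications being regarded as immediate. Your handling of (LCCC) and (LFa) matches that assessment and is fine. The substance of your proposal is the (LFb) case, and there you have correctly located the real issue: the hypothesis hands you a right adjoint to $\chi(A)^*$ only as a functor between the fibres $\TT(\Gamma)$ and $\TT(\Gamma.A)$, whereas Definition~\ref{def:cat-exponential} tests $\prod[f,g]$ against \emph{every} $h : W \myto \Gamma$, and bridging that gap is precisely a Beck--Chevalley problem. That is more care than the paper itself takes.

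However, the mate-calculus step you use to close the gap does not work as stated. The principle you invoke --- that the automatic Beck--Chevalley isomorphism for the left adjoints $\Sigma_A$ transports, by taking mates across $\Sigma_A \dashv \chi(A)^* \dashv \Pi_A$, into the Beck--Chevalley isomorphism for the right adjoints --- is false in general: the two conditions are independent. (For instance, take both vertical functors to be $\Delta : \sets \myto \sets \times \sets$ and the horizontal functors to be $F = (-) \times S$ and $F \times F$; the coproduct comparison $FX \sqcup FY \myto F(X \sqcup Y)$ is invertible, while the product comparison $F(X \times Y) \myto FX \times FY$ is not.) The argument that does work for genuine slice categories uses instead the left adjoints to the \emph{horizontal} functors $h^*$, namely postcomposition with $h$: one computes $\hom(D, h^*\Pi_A B) \iso \hom(\chi(A)^*(h \comp \chi(D)), \chi(B))$ and compares with $\hom(D, \Pi_{A[h]} B[h.A])$. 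But $h \comp \chi(D)$ is not a display map for arbitrary $h$, so it lies outside $\TT(\Gamma)$, and the fibrewise adjunction supplied by (LFb) cannot be applied to it; for the same reason your $\Sigma_A$, valued in composites of display maps, does not land in $\TT(\Gamma)$ either. (A smaller instance of the same mismatch: identifying sections with maps out of a unit type presumes a terminal object of $\TT(W)$ whose comprehension is $1_W$, which the axioms do not provide.) The argument closes immediately if the right adjoint is read as existing for $\chi(A)^* : \CC/\Gamma \myto \CC/\Gamma.A$ on full slices --- in which case no Beck--Chevalley detour is needed at all --- but with the fibrewise reading of (LFb) your key step is a genuine gap, and some further use of the hypothesis that every $X \myto 1$ is a display map (or a strengthening of the reading of the adjunction) is required.
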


The exponentials required by \LF\ can be essentially independent of any function types one may consider in the type theory.
On the one hand, they are not required to themselves be display maps.
On the other, they \myemph{are} required to be categorical exponentials, not merely type-theoretic function types (in general slightly weaker).
Compare how in logical framework presentations of the type theory, one usually asks for strong function types in the ambient logical framework, independently of what function types the object theory may possess \cite[Ch.~19]{nordstrom-petersson-smith}.

Given this assumption, all standard type-constructors will lift from $\CC$ to $\CC_{!}$ essentially independently.
We consider them one by one in Sections~\ref{sec:first-strux-sec}--\ref{sec:last-strux-sec}, which all have roughly the same form: we define precisely what it means for a comprehension category to have \stable{3}{weakly stable} widgets, and to have a \stable{0}{strictly stable} choice of widgets; and we show that if $\CC$ has \stable{3}{weakly stable} widgets and satisfies \LF, then $\CC_{!}$ has a \stable{0}{strictly stable} choice of widgets, and hence models type theory with widgets.
Before embarking on this, however, we first set out the general template for the construction of the structure on $\CC_!$, and set up some machinery that it requires.

We assume throughout the following discussion that $\CC$ satisfies condition \LF; however, we explicitly re-state this hypothesis in all theorems, as required.

\subsection{Template for structure on \pdfCbang} \label{sec:template} 

To illustrate the pattern we will follow, first consider the operation corresponding to the $(+)$-formation rule:
\begin{mathpar}
  \inferrule*{\Gamma \types A \type \\ \Gamma \types B \type}{\Gamma \types A + B \type}
\end{mathpar}
Assuming that $\CC$ is equipped with such an operation, we wish to lift it to $\CC_!$.

The result should send any object $\Gamma \in \CC_!$ and pair of types $A_1,A_2 \in \TT_!(\Gamma)$ to some type $A_1 + A_2 \in \TT_!(\Gamma)$.
These inputs correspond, in $\CC$, to objects $\Gamma$, $\baseof{A_i} \in \CC$, types $\topof{A_i} \in \TT(\baseof{A_i})$, and maps $\nameof{A_i} : \Gamma \myto \baseof{A_i}$.

One cannot directly take the sum $\topof{A_1} + \topof{A_2}$ in $\CC$, since these live in different fibers of $\TT$.
One could pull both $\topof{A_1}$, $\topof{A_2}$ back to $\TT(\Gamma)$ and take their sum there, but since this involves $\Gamma$, the resulting operation would not be \stable{0}{strictly stable} unless $(+)$ were already so in $\CC$.

Instead, note that taken together, the maps $\nameof{A_i}$ correspond to the single map $(\nameof{A_1},\nameof{A_2}) : \Gamma \myto \baseof{A_1} \times \baseof{A_2}$, and factor through it via the projections $\pi_1$, $\pi_2$.
We can thus pull both $\topof{A_1}$, $\topof{A_2}$ back to $\TT(\baseof{A_1} \times \baseof{A_2})$, and take their sum there.
Putting this together, we set $\baseof{A_1 + A_2} := \baseof{A_1} \times \baseof{A_2}$, $\topof{A_1 + A_2} := \topof{A_1}[\pi_2] + \topof{A_2}[\pi_1]$, and $\nameof{A_1 + A_2} := (\nameof{A_1},\nameof{A_2})$.

This is \stable{0}{strictly stable} in $\Gamma$, since the only part involving $\Gamma$ is the definition of $\nameof{A_1 + A_2}$, which (as an element of a \myemph{set}, not an object of a category) is, as one would hope, strictly natural in $\Gamma$.

The key point is that once the local universes $\baseof{A_i}$, $\topof{A_i}$ are chosen, the object $\baseof{A_1} \times \baseof{A_2}$ \myemph{represents} the premises of $(+)$-form: instances of the premises over a given $\Gamma$, with these universes, correspond to maps $\Gamma \myto \baseof{A_1} \times \baseof{A_2}$.
One may see $\baseof{A_1} \times \baseof{A_2}$ as parametrizing all possible sums of a type from $\baseof{A_1}$ and a type from $\baseof{A_2}$.

This pattern will be followed for all rules and constructors.
Given universes for all type premises of the rule, we construct an object $\baseof{}$ representing the rest of the data of the premises.

A specific instance of the premises over some context $\Gamma$ then corresponds to a map $\Gamma \myto \baseof{}$.
In particular, there is a universal instance over $\baseof{}$ itself.
To perform the operation on a particular instance, we first apply the operations of $\CC$ to this universal instance over $\baseof{}$.
(Here, we may rely on \stable{3}{weak stability} of structure in $\CC$ to know that reindexed from their own universes to $\baseof{}$, all types involved retain any universal properties required.)

In the case of a type constructor, we are now done, using $\baseof{}$ as the local universe of the new type, and the map $\Gamma \myto \baseof{}$ corresponding to the rest of the data as the new name map.
In the case of a term constructor, we further need to actually perform the reindexing from $\TT(\baseof{})$ to $\TT(\Gamma)$ to obtain an appropriate map in $\CC_!$.

In either case, the constructions depend on $\Gamma$ only via operations that are strictly natural in $\Gamma$: use of the universal property of $\baseof{}$, and reindexing of \myemph{maps} (not objects!) between fibers of $\TT$.
They are thus \stable{0}{strictly stable} in $\Gamma$, regardless of the stability of the structure in $\CC$.

\subsection{Manipulating local universes} \label{sec:manipulating-universes}

In the example above, it was straightforward to construct the representing object $\baseof{A} \times \baseof{B}$ for the premises of the rule in question.
For more complex rules, however, construction of this representing object---the new universe---may be rather more involved.
Indeed, this is the main technical work of the proof.

We establish here some tools for constructing such objects, beginning with one construction, in particular, which recurs for several different logical constructors.
The operations corresponding to $\Pi$-formation, $\Sigma$-formation, and $\W$-formation all take as input an object $\Gamma$, a type $A$ over $\Gamma$, and a dependent type $B$ over $\Gamma.A$.

These data in $\CC_!$, over a given object $\Gamma$, consist of $A=(\baseof{A},\topof{A},\nameof{A})$ in $\TT_{!}(\Gamma)$ and
$B=(\baseof{B},\topof{B},\nameof{B})$ in $\TT_{!}(\Gamma.A)$, which amount in $\CC$ to the following configuration:
\begin{align*}
  \begin{tikzpicture}[auto]
    \node (G) at (0,0) {$\Gamma$};
    \node (GA) at (0,1.5) {$\Gamma.A$};
    \node (NA) at (3,0) {$\baseof{A}$};
    \node (NAt) at (3,1.5) {$\topof{A}$};
    \node (NB) at (2,1.5) {$\baseof{B}$};
    \node (NBt) at (2,3) {$\topof{B}$};
    \draw[dotted] (NAt) to (NA);
    \draw[dotted] (NBt) to (NB);
    \draw[->] (G) to node[mylabelsmall] {$\nameof{A}$} (NA);
    \draw[->] (GA) to node[mylabelsmall] {$\nameof{B}$} (NB);
    \draw[->] (GA) to (G);
  \end{tikzpicture}
\end{align*}
(Here $\Gamma.A \myto A$ is the comprehension $\chi_!(A)$, taken in $\CC_!$; in terms of $\CC$, this is $\chi([A]) : \Gamma.[A] \myto \Gamma$.)

Now, fixing $(\baseof{A},\topof{A})$ and $(\baseof{B},\topof{B})$, pairs of maps $\nameof{A}$, $\nameof{B}$ as above correspond by adjunction to maps from $\Gamma$ into the object
\begin{align*}
  (\baseof{A},\topof{A}) \famop (\baseof{B},\topof{B}) & := \sum_{\baseof{A}} \prod_{\topof{A}}\left(\parbox{2.75cm}{
      \begin{tikzpicture}
        \node (GA) at (0,1.25) {$(\baseof{A}.\topof{A})\times \baseof{B}$};
        \node (G) at (0,0) {$\baseof{A}.\topof{A}$};
        \draw[->] (GA) to (G);
      \end{tikzpicture}
    }\right),
\end{align*}
where $\prod_{\topof{A}} : \CC/{\baseof{A}.\topof{A}} \myto \CC/{\baseof{A}}$ denotes the right adjoint to $\chi(\topof{A})^*$, which exists
by hypothesis \LF, and $\sum_{\baseof{A}} : \CC/\baseof{A} \myto \CC$ simply sends a map to its codomain.
(By the usual abuse of notation, we will often denote this object simply by $\baseof{A} \famop \baseof{B}$.)
Moreover, this correspondence is natural in $\Gamma$.

In particular, the identity map of $\baseof{A} \famop \baseof{B}$ corresponds to the universal such pair of maps, which we denote by 
\[ \pi_A : \baseof{A} \famop \baseof{B} \myto \baseof{A}, \qquad
   \pi_B : (\baseof{A} \famop \baseof{B}).(\topof{A}[\pi_A]) \myto \baseof{B}.\]

$\baseof{A} \famop \baseof{B}$ may thus be considered as the object of ``families of types in $\baseof{B}$, indexed by a type in $\baseof{A}$''.

The definition of $\baseof{A} \famop \baseof{B}$ may alternatively be presented in a type-theoretic internal language for $\CC$: not the arbitrary type theory that we are trying to model, but a specific type theory with just $\Pi$-types, satisfying the judgemental $\eta$-rule, to handle the substitution and exponentiation in $\CC$ and its slices.
In this language, the definition becomes:
\[ \baseof{A} \famop \baseof{B} := \exinterp{ a \of \baseof{A},\, b \of \baseof{B}^{\topof{A}(a)} }. \]

For complex constructions, this notation scales somewhat more readably than using categorical combinators.
For instance, in the operation corresponding to the $(+)$-elimination rule,
\begin{mathpar}
  \inferrule*{\Gamma \types A, B \type \\ \Gamma,\, w : {A + B} \types C(w) \type \\
              \Gamma,\, x \of A \types d_1 : C(\inl(x)) \\ 
              \Gamma,\, y \of B \types d_2 : C(\inr(y)) }
             {\Gamma,\, w : {A + B} \types \copair{x.d_1}{y.d_2}(w) : C(w) }
\end{mathpar}
once universes $\baseof{A}$, $\baseof{B}$, $\baseof{C}$ are chosen, the representing object for the premises is given by
\acmarxiv
  {\[
    \lexinterp
      a \of \baseof{A},\,
      b \of \baseof{B},\,
      c \of \baseof{C}^{\topof{A}(a) + \topof{B}(b)},\,
      d_1 \of \Pi_{x \of \topof{A}(a)}\topof{C}(c(\nu_1(x))),\,
      d_2 \of \Pi_{y \of \topof{B}(b)}\topof{C}(c(\nu_2(y)))
    \rexinterp
  \]}
  {\begin{multline*}
    \lexinterp
      a \of \baseof{A},\,
      b \of \baseof{B}, \\
      c \of \baseof{C}^{\topof{A}(a) + \topof{B}(b)},\,
      d_1 \of \Pi_{x \of \topof{A}(a)}\topof{C}(c(\nu_1(x))),\,
      d_2 \of \Pi_{y \of \topof{B}(b)}\topof{C}(c(\nu_2(y)))
    \rexinterp
  \end{multline*}}
which in categorical combinators is
\begin{multline*}
    \Sigma_{\baseof{A}} \Sigma_{\Delta_{\baseof{A}} \baseof{B}}
    \Sigma_{(\Delta_{\Delta_\baseof{A} \baseof{B}}\baseof{C})^{(\topof{A}[\pi_1] + \topof{B}[\pi_2])}} \\
    \big(
      \Pi_{\topof{A}[\pi_1 \comp \pi_1]} \topof{C}[\ev \comp \nu_1[\pi_1] ] 
    \times
      \Pi_{\topof{B}[\pi_2 \comp \pi_1]} \topof{C}[\ev \comp \nu_2[\pi_1] ] 
    \big).
\end{multline*}

For this reason, we use the internal language to present such objects below.
Unfortunately, this is somewhat laborious to formally justify.
Since we do not restrict the local universes $\baseof{A}$ to “fibrant objects” (i.e.\ with $\baseof{A} \myto 1$ a display map), nor assume that exponentiation preserves display maps, we need “types” of this internal language to include arbitrary maps of $\CC$, or at least something more general than the types of $\TT$.
$\Pi$-types between them may therefore not always be defined; so we cannot take the language to be the $\Pi$-fragment of ETT, and thus cannot quite apply Theorem~\ref{thm:hofmann-c-star} to justify its interpretation.

To thoroughly address this question, one could consider type theory with an extra judgement “$\Gamma \types A \fibration$” added to the syntax (cf.\ \acmarxiv{the system HTS of \cite{voevodsky:hts-notes}}{Voevodsky’s system HTS \cite{voevodsky:hts-notes}}), and with $\Pi$-formation restricted to the case where the domain is given by this judgement.
Correspondingly, one would consider comprehension categories equipped with a subfibration $\FF \subseteq \TT$, and extend Theorem~\ref{thm:hofmann-c-star} to this setting.

For the present purposes, however, it is simpler to regard the internal language merely as a notational shorthand, since we do not require the full interpretation function, but only finitely many instances of it, which the scrupulous reader may unwind into the algebraic language of products, pullbacks, and exponentials as required.

\subsection{Logical structure on \pdfCbang} \label{sec:main-proof}

With the machinery set up, we are now ready to lift logical structure from $\CC$ to $\CC_!$, one constructor at a time.

Taken together, the following lemmata constitute our main result:
\begin{theorem} \label{thm:main-thm}
Let $\CC$ be a full comprehension category satisfying condition \LF.

If $\CC$ has \stable{3}{weakly stable} binary sums (resp.\ $\Pi$-types, identity types, $\Sigma$-types, zero types, unit types, or $\W$-types relative to a stable class of $\Pi$-types), then its split replacement $\CC_!$ has \stable{0}{strictly stable} binary sums ($\Pi$-types, \ldots), and hence models the syntax of type theory with binary sums ($\Pi$-types, \ldots).
\end{theorem}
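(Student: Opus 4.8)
The plan is to prove Theorem~\ref{thm:main-thm} constructor-by-constructor, following the template already laid out in Section~\ref{sec:template}. Each individual clause (binary sums, $\Pi$-types, identity types, etc.) will be its own lemma, and the theorem is simply their conjunction together with the initiality results from Section~\ref{sec:setting} (which let us pass from ``$\CC_!$ has strictly stable widgets'' to ``$\CC_!$ models the syntax with widgets''). So the real content is: \emph{given weakly stable widgets in $\CC$, construct a strictly stable choice of widgets in $\CC_!$.}

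For each constructor I would proceed in three steps. First, \textbf{build the new local universe.} Given the input data in $\CC_!$ over $\Gamma$ --- which unwinds to local universes $(\baseof{A_i},\topof{A_i})$ together with name maps $\nameof{A_i}$ into them --- I form an object $\baseof{}$ in $\CC$ that \emph{represents} the non-type part of the premises, so that instances of the premises over $\Gamma$ correspond naturally to maps $\Gamma \myto \baseof{}$. For binary sums this is just $\baseof{A_1}\times\baseof{A_2}$; for $\Pi$, $\Sigma$, $\W$ it is the object $\baseof{A}\famop\baseof{B}$ constructed in Section~\ref{sec:manipulating-universes}; for eliminators it is a larger $\Sigma$-of-$\Pi$ object as exhibited in the $(+)$-elimination example. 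Here condition \LF\ is exactly what guarantees these products and dependent exponentials exist. Second, \textbf{apply the $\CC$-operation to the universal instance.} Over $\baseof{}$ there is a universal instance of the premises; I apply the given (weakly stable) widget operation of $\CC$ to \emph{it}, obtaining a new type $\topof{}$ over $\baseof{}$. I then set the new name map to be the map $\Gamma \myto \baseof{}$ that classifies the actual premises, completing a triple $(\baseof{},\topof{},\nameof{})$ in $\TT_!(\Gamma)$ (for term constructors, I additionally reindex the resulting section from $\TT(\baseof{})$ down to $\TT(\Gamma)$). Third, \textbf{verify the universal property and strict stability.} By naturality of the representing object in $\Gamma$ and the fact that the name map enters only via operations strictly natural in $\Gamma$ --- use of the universal property of $\baseof{}$, and reindexing of \emph{maps} (not objects) between fibers of $\TT$ --- the construction commutes on the nose with precomposition by any $\sigma:\Gamma'\myto\Gamma$, giving strict stability. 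That the reindexed structure still has the \emph{correct} universal property in $\TT(\Gamma)$ is precisely where weak stability of the $\CC$-widgets is invoked: pulling the universal instance back along $\nameof{}$ to $\Gamma$ preserves whatever universal property the widget was required to satisfy.

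The \textbf{main obstacle} is the same one flagged in Section~\ref{sec:manipulating-universes}: constructing the representing object $\baseof{}$ correctly for the constructors whose premises involve dependent types and, especially, eliminators with their motive $C$ and case data $d_i$. The bookkeeping of reindexings and exponentials needed to express, e.g., the $(+)$-elimination representing object is delicate, and formally justifying the internal-language shorthand (where $\Pi$-types must range over arbitrary maps of $\CC$, not just display maps, so that Theorem~\ref{thm:hofmann-c-star} does not directly apply) is laborious. I would therefore treat the internal language purely as notation for finitely many explicit categorical combinator expressions, and for each constructor discharge the universal-property check by hand, relying throughout on \LF\ for existence of the relevant exponentials and on the adjunction $\CC/X(W,\prod[f,g]) \iso \CC/Y(W\times_X Y, Z)$ of Definition~\ref{def:cat-exponential} to translate between maps into $\baseof{}$ and instances of the premises.

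The remaining subtlety is the $\W$-type clause, which is stated only ``relative to a stable class of $\Pi$-types''; there the construction of $\baseof{}$ and the verification of the initial-algebra universal property must be carried out inside a fixed ambient supply of $\Pi$-structure, but the three-step pattern is otherwise unchanged.
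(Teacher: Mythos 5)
Your proposal is correct and follows essentially the same route as the paper: the theorem is proved as the conjunction of one lemma per constructor, each built by forming a representing object for the premises (using \LF), applying the weakly stable operation of $\CC$ to the universal instance over that object, and reindexing along the classifying map, with strict stability falling out of the strict naturality in $\Gamma$ of these two steps. In particular you correctly identify the one genuine pitfall---that eliminators must be chosen over the representing object rather than over $\Gamma$---which is exactly the point the paper's proof of the elimination clauses turns on.
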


\counterwithin{theorem}{subsubsection}
%

See also Section~\ref{sec:further-rules} for a discussion of how this extends to other rules and constructors.

\subsubsection{Binary sums} \label{sec:sums} \label{sec:first-strux-sec}

First, we return in full to the case of binary sums.
(Note that we consider general type-theoretic (weak) binary sums, not necessarily assumed to be categorical coproducts.)

\begin{definition}
  Given a comprehension category $\CC$, an object $\Gamma \in \CC$, and types $A_1, A_2 \in \TT(\Gamma)$, a \myemph{binary sum} for $A_1$ and $A_2$ consists of:
  \begin{itemize}
  \item a type $A_1 + A_2 \in \TT(\Gamma)$;
  \item maps $\nu_i : \Gamma.A_i \myto \Gamma.A_1 + A_2$ in $\CC$ over $\Gamma$ (for $i = 1,2$) (the \myemph{sum inclusions});
  \item such that for any type $C \in \TT(\Gamma . (A_1 + A_2))$ and sections $t_i : \Gamma . A_i \myto \Gamma . A_i . C[\nu_i]$, there is some section $\copair{t_1}{t_2} : \Gamma . (A_1 + A_2) \myto \Gamma . (A_1 + A_2) . C$, such that $\copair{t_1}{t_2} \comp \nu_i = \nu_i.C \comp t_i$.
  \end{itemize}
\end{definition}

\begin{definition}
  A comprehension category $\CC$ has \myemph{\stable{3}{weakly stable} binary sums} if each $\Gamma$, $A_1$, $A_2$ as above has some binary sum $(A_1 + A_2, \nu_1, \nu_2)$, such that for every $\sigma : \Delta \myto \Gamma$, $((A_1 + A_2)[\sigma], \nu_1[\sigma], \nu_2[\sigma])$ is a binary sum for $A_1[\sigma]$ and $A_2[\sigma]$ over $\Delta$.
\end{definition}

(Note that this condition is independent of the choice of reindexings used, i.e.\ of the cleaving of $\TT$.)

\begin{definition}
  A split comprehension category $\CC$ has \myemph{\stable{0}{strictly stable} binary sums} it it is equipped with functions giving for each $\Gamma$, $A_1$, $A_2$ some chosen binary sum $(A_1 + A_2, \nu_1, \nu_2)$, and moreover for each suitable $C, t_1, t_2$ some chosen copair $\copair{t_1}{t_2}$, such that for every $\sigma : \Delta \myto \Gamma$, 
  \begin{align*}
    (A_1 + A_2)[\sigma] &= A_1[\sigma] + A_2[\sigma] \\
    \nu_i[\sigma] &= \nu_i : A_i[\sigma] \myto A_1[\sigma] + A_2[\sigma] \\
    \copair{t_1}{t_2}[\sigma] &= \copair{t_1[\sigma]}{t_2[\sigma]}.
  \end{align*}
\end{definition}

(By contrast, this is certainly not independent of the choice of reindexings given by the splitting of $\TT$.) \\

The components of this definition---the sum type, the inclusion maps, the copair map, and the copair equations---correspond precisely to the type-theoretic rules for binary sums \cite[p.~55]{martin-lof:bibliopolis}.
A split comprehension category $\CC$ with \stable{0}{strictly stable} binary sums is thus precisely what is needed to interpret the syntax of type theory with these rules (cf.~\cite[\textsection 3.3]{hofmann:syntax-and-semantics}).

\begin{lemma} \label{lemma:binary-sums}
If $\CC$ has \stable{3}{weakly stable} binary sums and satisfies condition \LF, then $\CC_!$ has \stable{0}{strictly stable} binary sums.
\end{lemma}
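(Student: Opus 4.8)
Following the template of Section~\ref{sec:template}, I would first produce the sum type itself. Given $\Gamma$ and $A_1, A_2 \in \TT_!(\Gamma)$, with local universes $(\baseof{A_i},\topof{A_i})$ and names $\nameof{A_i} : \Gamma \myto \baseof{A_i}$, set $\baseof{A_1+A_2} := \baseof{A_1} \times \baseof{A_2}$, which exists as \LF\ supplies finite products, and let $\topof{A_1+A_2} := \topof{A_1}[\pi_1] + \topof{A_2}[\pi_2]$ be a chosen \stable{3}{weakly stable} binary sum in $\CC$ over $\baseof{A_1}\times\baseof{A_2}$, carrying its inclusions. Finally put $\nameof{A_1+A_2} := (\nameof{A_1},\nameof{A_2}) : \Gamma \myto \baseof{A_1}\times\baseof{A_2}$, and take the $\CC_!$-inclusions $\nu_i$ to be the reindexings along $\nameof{A_1+A_2}$ of the universal inclusions. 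Since $\baseof{A_1+A_2}$, $\topof{A_1+A_2}$ and the universal inclusions depend only on the fixed universe data and not on $\Gamma$, while $\nameof{A_1+A_2}$ is built from the $\nameof{A_i}$ by pairing, both the sum type and its inclusions are \stable{0}{strictly stable} at once: reindexing along $\sigma$ touches only the name component, and pairing composes with $\sigma$ on the nose.

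Next I would isolate the one substantive appeal to the hypothesis. Since $\pi_i \comp (\nameof{A_1},\nameof{A_2}) = \nameof{A_i}$, the reindexing $[A_1+A_2]$ is precisely the chosen sum over $\baseof{A_1}\times\baseof{A_2}$ pulled back along $(\nameof{A_1},\nameof{A_2})$, whose summands reindex to $\topof{A_i}[\nameof{A_i}] = [A_i]$. \stable{3}{Weak stability} guarantees exactly that this pullback, together with its reindexed inclusions, is again a binary sum, now of $[A_1]$ and $[A_2]$ over $\Gamma$. The identical principle, applied this time to pullbacks along the projection $U \myto \baseof{A_1}\times\baseof{A_2}$ introduced below, will let me treat the universal sum over $U$ as a genuine sum when the eliminator of $\CC$ is invoked.

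The substantive work is the copair operation, which I would build by the representing-object recipe of Section~\ref{sec:manipulating-universes}. Holding the three universes $\baseof{A_1}, \baseof{A_2}, \baseof{C}$ fixed, I would use \LF\ to assemble the object $U$ classifying the remaining premises of $(+)$-elimination --- the object displayed there, whose points consist of names $\nameof{A_1}, \nameof{A_2}$, a name $\nameof{C}$ for a type over the sum, and sections $t_1, t_2$ (the $d_i$ slots, encoded as $\Pi$-types through the exponentials from \LF) --- so that premise-data over $\Gamma$ corresponds, naturally in $\Gamma$, to maps $\Gamma \myto U$. Over the universal instance, classified by $1_U$, the previous paragraph makes the universal sum a genuine sum, so the eliminator of $\CC$ yields a universal copair $\bar c$ over $U$, independent of $\Gamma$. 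For an arbitrary instance classified by $m : \Gamma \myto U$, I would then set $\copair{t_1}{t_2}$ to be the reindexing of $\bar c$ along the comprehension map induced by $m$; the copair equations $\copair{t_1}{t_2} \comp \nu_i = \nu_i.C \comp t_i$ hold because they hold for $\bar c$ over $U$ and are preserved under reindexing.

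Strict stability of the copair then follows from the very design of $\CC_!$. The section $\copair{t_1}{t_2}$ is manufactured from the $\Gamma$-independent data $U$ and $\bar c$ together with the classifying map $m$, and $m$ is assembled from $\nameof{A_i}, \nameof{C}, t_1, t_2$ by pairing and adjoint transposition, each strictly natural in $\Gamma$; hence $m \comp \sigma$ classifies exactly the $\sigma$-reindexed premises. Because reindexing in $\CC_!$ alters only name and classifying maps, and composes them on the nose, we obtain $\copair{t_1}{t_2}[\sigma] = \copair{t_1[\sigma]}{t_2[\sigma]}$ with no coherence isomorphism intervening --- reindexing of objects, which is only pseudo-functorial in $\CC$, never enters, since everything is phrased through a fixed universe and a strictly natural classifying map. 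I expect the genuine obstacle to lie entirely in the construction of $U$: verifying, via the adjunction manipulations of Section~\ref{sec:manipulating-universes}, that premise-data really does correspond naturally to maps $\Gamma \myto U$, and that the universal instance retains all the universal properties the eliminator of $\CC$ requires. The strictness, by contrast, is essentially forced once the operation is expressed in this form --- the characteristic dividend of the left-handed, local-universes strictification.
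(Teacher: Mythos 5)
Your proposal is correct and follows essentially the same route as the paper's proof: the same choice of local universe $\baseof{A_1}\times\baseof{A_2}$ with a weakly stable sum $\topof{A_1}[\pi_1]+\topof{A_2}[\pi_2]$ over it, inclusions obtained by reindexing the universal ones along the name map, and a copair constructed by first forming a representing object for the elimination premises (the paper's $\baseof{\copair{d_1}{d_2}}$, your $U$), taking a universal copair there via weak stability, and pulling it back along the strictly natural classifying map. You also correctly identify and avoid the pitfall the paper warns about, namely taking the copair directly over $\Gamma$.
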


\begin{proof}We take each component of the definition in turn.

\paragraph{Formation} Suppose we are given suitable $\Gamma$, $A_1$, $A_2$ in $\CC_!$, and wish to form $A_1 + A_2$.
These correspond to local universes $\baseof{A_i} \in \CC$, $\topof{A_i} \in \TT(\baseof{A_i})$, and maps $\nameof{A_i} : \Gamma \myto \baseof{A_i}$.

Set $\baseof{A_1 + A_2} := \baseof{A_1} \times \baseof{A_2}$.
As indicated previously, this may be seen as the object of “(formal) sums of a type from $\baseof{A_1}$ with a type from $\baseof{A_2}$”.
Precisely, for any $\Gamma$, pairs $\nameof{A_1}$, $\nameof{A_2}$ as above correspond to maps $(\nameof{A_1},\nameof{A_2}) : \Gamma \myto \baseof{A_1 + A_2}$, naturally in $\Gamma$.

In particular, the identity map of $\baseof{A_1 + A_2}$ corresponds to the projections $\pi_i : \baseof{A_1 + A_2} \myto \baseof{A_i}$.
Pulling back the types $\topof{A_i}$ along these, we obtain types $\topof{A_i}[\pi_i] \in \TT(\baseof{A_1 + A_2})$.
Take $\topof{A_1 + A_2}$ to be the sum $\topof{A_1}[\pi_1] + \topof{A_2}[\pi_2]$.

Finally, take $\nameof{A_1 + A_2} := (\nameof{A_1},\nameof{A_2}) : \Gamma \myto \baseof{A_1 + A_2}$, picking out the appropriate specific pairs of types.

For  \stable{0}{strict stability}, suppose in addition to the above data we have some $\sigma : \Gamma' \myto \Gamma$; we need to show that $(A_1 + A_2)[\sigma] = A_1[\sigma] + A_2[\sigma]$.
It is immediate that their universes are equal---i.e.\ that $\baseof{(A_1 + A_2)[\sigma]} = \baseof{A_1[\sigma] + A_2[\sigma]}$ and $\topof{(A_1 + A_2)[\sigma]} = \topof{A_1[\sigma] + A_2[\sigma]}$---since the universe of the sum depends only on the universes of the summands, and substitution in the summands does not change their universes.
On the other hand,
\begin{align*}
  \nameof{(A_1 + A_2)[\sigma]}  & = (\nameof{A_1},\nameof{A_2}) \comp \sigma \\
  & = (\nameof{A_1} \comp \sigma, \nameof{A_2} \comp \sigma ) \\
  & = \nameof{A_1[\sigma] + A_2[\sigma]};
\end{align*}
that is, \stable{0}{strict stability} of $(+)$ comes exactly from the (strict) naturality in $\Gamma$ of the universal property of $\baseof{A_1} \times \baseof{A_2}$.

\paragraph{Introduction}  For the sum inclusions, suppose again we have $\Gamma$, $A_1$, $A_2$ in $\CC_!$.
Having constructed $A_1 + A_2$ as above, note that since $\topof{A_1 + A_2}$ was chosen as a \stable{3}{weakly stable} sum, it comes with inclusion maps $\bar{\nu}_i : \topof{A_i}[\pi_i] \myto \topof{A_1 + A_2}$.
Now set $\nu_i := \bar{\nu}_i[\nameof{A_1 + A_2}] : [A_i] \myto[] [A_1 + A_2]$.

(We are using here the convention that maps may be reindexed to arbitrary reindexings of their domain and codomain.
We will do so in future without comment.)

Breaking down this definition a little: we first consider the introduction maps in the universal case, $\bar{\nu}_i : \topof{A_i}[\pi_i] \myto \topof{A_1 + A_2}$, over $\baseof{A_1 + A_2}$.
We then reindex this to $\TT(\Gamma)$ along $\nameof{A_1 + A_2}$.

\begin{align*}
  \begin{tikzpicture}[auto,x={(5cm,0cm)},y={(-0.5cm,2.5cm)},z={(2.4cm,1.2cm)}]
    \node (G) at (0,0,0) {$\Gamma$};
    \node (Vsum) at (1,0,0) {$\baseof{A_1 + A_2}$};
    \node (VA) at (1.8,0,0) {$\baseof{A_i}$};
    \draw[->] (G) to node[mylabel] {$\nameof{A_1 + A_2}$} (Vsum);
    \draw[->] (Vsum) to node[mylabel] {$\pi_i$} (VA);
    \node (GA) at (0,1,-0.5) {$\Gamma.A_i$};
    \draw[->] (GA) to (G);
    \node (EApi) at (1,1,-0.5) {$\baseof{A_1 + A_2} . \topof{A_i}[\pi_i]$};
    \draw[->] (EApi) to (Vsum);
    \node (EA) at (1.8,1,-0.5) {$\baseof{A_i}.\topof{A_i}$};
    \draw[->] (EA) to (VA);
    \node (Gsum) at (0,1,0.5) {$\Gamma.(A_1 + A_2)$};
    \draw[->] (Gsum) to (G);
    \node (Esum) at (1,1,0.5) {$\baseof{A_1 + A_2} . \topof{A_1 + A_2}$};
    \draw[->] (Esum) to (Vsum);
    \draw[->,cross line] (Gsum) to (Esum);
    \draw[->,cross line] (GA) to (EApi);
    \draw[->,cross line] (EApi) to (EA);
    \draw[->,dashed] (GA) to node[mymidlabel] {$\nu_i$} (Gsum);
    \draw[->] (EApi) to node[mymidlabel] {$\bar{\nu}_i$} (Esum);
  \end{tikzpicture}
\end{align*}

\paragraph{Elimination}  Defining copairing in $\CC_!$ holds a subtle pitfall for the unwary---one worth looking at explicitly, since it will recur later for other constructors.

Consider the data $\Gamma$, $A_1$, $A_2$, $C$, $d_1$, $d_2$ in $\CC_!$, for forming a copair.
In $\CC$, these correspond to $\Gamma$, $\baseof{A_i}$, $\topof{A_i}$, $\nameof{A_i}$ as above, together with another local universe $\topof{C} \in \TT(\baseof{C})$, a name map $\nameof{C} : \Gamma . (A_1 + A_2) \myto \baseof{C}$, and sections $d_i : \Gamma.A_i  \myto \Gamma . A_i . \topof{C}[\nameof{C} \comp \nu_i]$.

We require a copair for $C$, $d_1$, $d_2$ in $\CC_!$; that is, a certain section $\Gamma.(A_1 + A_2) \myto \Gamma.(A_1 + A_2).C$, commuting appropriately with $\nu_i$, $d_i$.
This corresponds to a section $\Gamma.[A_1 + A_2] \myto \Gamma.[A_1 + A_2].[C]$ in $\CC$, commuting there with $\nu_i$, $d_i$.

The obvious way to obtain such a section is simply as a copair in $\CC$.
We chose $\topof{A_1+A_2}$ as a weakly stable $(+)$-type over $\baseof{A_1+A_2}$, so $[A_1 + A_2]$ is a $(+)$-type over $\Gamma$, and the data $[C]$, $d_1$, $d_2$ are just right for forming a copair there.

However, the resulting operation would \myemph{not} necessarily be \stable{0}{strictly stable}, since the copair in $\CC$ was taken over $\Gamma$.%
\footnote{If the $(+)$-types of $\CC$ are \stable{1}{pseudo-stable}---for instance, if they satisfy the $\eta$-rule, making them categorical coproducts---then this direct definition of copairing is \stable{0}{strictly stable} after all; and in the case of $(+)$-types, this would be a reasonably mild extra condition to demand.
However, for identity types (and more general inductive families), the analogous hypothesis would be much less innocuous, implying in particular the reflection rule, and hence UIP.}
We therefore resist this tempting shortcut and keep to the general approach prescribed above, first taking a ``universal copair'' depending just on the universes $\baseof{A_1}$, $\baseof{A_2}$, $\baseof{C}$.
Only having done this do we pull it back (strictly naturally) to the specific context $\Gamma$ in question.

Precisely, fix universes $\baseof{A_i}$, $\topof{A_i}$, $\baseof{C}$, $\topof{C}$, and set:
\begin{equation*} \begin{split}
    \baseof{\copair{d_1}{d_2}} :=
    \lexinterp
  &   a_1\of \baseof{A_1},\, a_2 \of \baseof{A_2},\,
      c \of \baseof{C}^{\topof{A_1+A_2}(a_1,a_2)},\, \\
  &   d_1 \of \Pi_{x \of \topof{A_1}(a_1)}\topof{C}(c(\nu_1(x))),\,
      d_2 \of \Pi_{y \of \topof{A_2}(a_2)}\topof{C}(c(\nu_2(y)))
    \rexinterp
\end{split} \end{equation*}

The remaining data $\nameof{A_i}$, $\nameof{C}$, $d_i$ correspond to maps $\Gamma \myto \baseof{\copair{d_1}{d_2}}$, naturally in $\Gamma$.
In particular, the identity $1_{\baseof{\copair{d_1}{d_2}}}$ corresponds to maps
\begin{gather*}
  \pi_{A_i} : \baseof{\copair{d_1}{d_2}} \myto \baseof{A_i} \quad \pi_{C} : \baseof{\copair{d_1}{d_2}} . \topof{A_1 + A_2}[(\pi_{A_1},\pi_{A_2})] \myto \baseof{C} \\
  \pi_{d_i} : \baseof{\copair{d_1}{d_2}}.\topof{A_i}[\pi_{A_i}] \myto \baseof{\copair{d_1}{d_2}}.\topof{A_i}[\pi_{A_i}].\topof{C}[\pi_{C} \comp \nu_i[(\pi_{A_1},\pi_{A_2})]].
\end{gather*}

Now, as in the direct approach, since $\topof{A_1 + A_2}$ was a \stable{3}{weakly stable} sum, its reindexing $\topof{A_1 + A_2}[(\pi_{A_1},\pi_{A_2})]$ together with the inclusion maps $\nu_i[(\pi_{A_1},\pi_{A_2})]$ is a sum for $\topof{A_1}[\pi_{A_1}]$ and $\topof{A_2}[\pi_{A_2}]$ over $\baseof{\copair{d_1}{d_2}}$.
So we may form there the copair section
\[ \! \copair{\pi_{d_1}}{\pi_{d_2}}
 : \baseof{\copair{d_1}{d_2}} . \topof{A_1 + A_2}[(\pi_{A_1},\pi_{A_2})]
  \myto \baseof{\copair{d_1}{d_2}} . \topof{A_1 + A_2}[(\pi_{A_1},\pi_{A_2})] . \topof{C}[\pi_C]. \]

Pulling this back along 
\[ (\nameof{A_1},\nameof{A_2},\nameof{C},d_1,d_2) : \Gamma \myto \baseof{\copair{d_1}{d_2}} \]
then gives us a section
\[ \Gamma . (A_1 + A_2) \myto \Gamma . (A_2 + A_2) . C \]
which we take as the copair $\copair{d_1}{d_2}$ in $\CC_!$.

\stable{0}{Strict stability} of this operation follows from the fact that the only involvement of $\Gamma,\nameof{A_1},\nameof{A_2},\nameof{C},d_1,d_2$ was via the map $\Gamma \myto \baseof{\copair{d_1}{d_2}}$, and the pullback of a section along this map, both of which are strictly natural in $\Gamma$.
In particular, the copair used in $\CC$, which a priori may not satisfy any naturality condition, was taken over $\baseof{\copair{d_1}{d_2}}$, and so is unaffected by any reindexing $\Gamma' \myto \Gamma$.

\paragraph{Computation}
Finally, the copair-inclusion equations for $\copair{d_1}{d_2}$ follow directly \acmarxiv{from the equations in $\CC$ for the copair}{from the equations in $\CC$ for} $\copair{\pi_{d_1}}{\pi_{d_2}}$, pulled back along $(\nameof{A_1},\nameof{A_2},\nameof{C},d_1,d_2)$.
\end{proof}

\subsubsection{Dependent products}\label{sec:pis}

\begin{definition} \label{def:dep-products}
  Given $\Gamma \in \CC$, $A \in \TT(\Gamma)$, $B \in \TT(\Gamma.A)$, a \myemph{dependent product} for $\Gamma$, $A$, $B$ is given by:
  \begin{itemize}
  \item a type $\prod[A,B] \in \TT(\Gamma)$;
  \item a map $\app_{A,B} : \prod[A,B][\chi(A)] \myto B$ in $\TT(\Gamma.A)$;
  \item an operation giving for each section $t : \Gamma.A \myto \Gamma.A.B$, a section $\lambda (t) : \Gamma \myto \Gamma.\prod[A,B]$, such that $(\Gamma.A.\app_{A,B}) \comp (1_{\Gamma.A}, \lambda(t)) = t$.
  \end{itemize}
\end{definition}

\begin{definition}
  $\CC$ has \myemph{\stable{3}{weakly stable} dependent products} if every $\Gamma$, $A$, $B$ has some $(\prod[A,B],\app_{A,B})$ as above, such that for every $\sigma : \Delta \myto \Gamma$, there is some operation $\lambda$ making $(\prod[A,B][\sigma],\app_{A,B}[\sigma],\lambda)$ a dependent product for $\Delta$, $A[\sigma]$, $B[\sigma]$.
  More specifically, call such $(\prod[A,B],\app_{A,B})$ a \myemph{\stable{3}{weakly stable} dependent product} for $\Gamma$, $A$, $B$.
\end{definition}

(Note again that this is independent of the cleaving of $\TT$.)
 
\begin{definition}
$\CC$ has \myemph{\stable{0}{strictly stable} dependent products} if it is equipped with operations giving $\prod(A,B)$, $\app_{A,B}$, and $\lambda (t)$ for all appropriate $\Gamma,A,B$ and $t$ as above, such that for every $\sigma : \Delta
\myto \Gamma$,
\begin{align*}
  \prod[A,B][\sigma] & = \prod\bigl[A[\sigma],B[\sigma]\bigr]\\
  \app_{A,B}[\sigma] & = \app_{A[\sigma],B[\sigma]}\\
  ( \lambda (t) )[\sigma] & = \lambda (t[\sigma]).
\end{align*}
\end{definition}

(Again, this is by contrast entirely dependent on the chosen cleaving.)
 
\begin{lemma} \label{lemma:pi-types}
  If $\CC$ has \stable{3}{weakly stable} dependent products and satisfies condition \LF, then $\CC_{!}$ has \stable{0}{strictly stable} dependent products.
\end{lemma}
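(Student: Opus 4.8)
The plan is to follow the exact same template established for binary sums in Lemma~\ref{lemma:binary-sums}, adapting each of the four components (formation, introduction, elimination, computation) to the $\Pi$-type rules. The central construction will reuse the family-forming operation $\famop$ introduced in Section~\ref{sec:manipulating-universes}, since this is precisely the object that represents a type $A$ over $\Gamma$ together with a dependent type $B$ over $\Gamma.A$.

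\medskip

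\noindent\textbf{Formation.} Given $\Gamma$, $A$, $B$ in $\CC_!$, the data amount in $\CC$ to local universes $\baseof{A}$, $\topof{A} \in \TT(\baseof{A})$, $\baseof{B}$, $\topof{B} \in \TT(\baseof{B})$, and name maps $\nameof{A} : \Gamma \myto \baseof{A}$, $\nameof{B} : \Gamma.A \myto \baseof{B}$. I would set $\baseof{\prod[A,B]} := \baseof{A} \famop \baseof{B}$, which by construction represents exactly such pairs of name maps, naturally in $\Gamma$. The universal instance over $\baseof{A} \famop \baseof{B}$ is given by the canonical maps $\pi_A$, $\pi_B$ of Section~\ref{sec:manipulating-universes}; pulling $\topof{A}$, $\topof{B}$ back along these yields a type $\topof{A}[\pi_A]$ over $\baseof{A} \famop \baseof{B}$ and a dependent type over its extension. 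Since $\topof{B}$ was chosen as a part of a \stable{3}{weakly stable} dependent product datum, I take $\topof{\prod[A,B]}$ to be the weakly stable dependent product $\prod[\topof{A}[\pi_A], \topof{B}[\pi_B]]$ formed over $\baseof{A} \famop \baseof{B}$, and $\nameof{\prod[A,B]} := (\nameof{A},\nameof{B}) : \Gamma \myto \baseof{A} \famop \baseof{B}$. Strict stability in $\Gamma$ is immediate exactly as for sums: the universe depends only on $\baseof{A}$, $\baseof{B}$, and the name map is strictly natural in $\Gamma$ by the (strict) naturality of the universal property of $\famop$.

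\medskip

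\noindent\textbf{Application, abstraction, and computation.} For $\app$, I would form the universal application map $\app_{\topof{A}[\pi_A],\topof{B}[\pi_B]}$ over $\baseof{A} \famop \baseof{B}$, coming with the weakly stable product, and reindex it along $\nameof{\prod[A,B]}$ to obtain $\app_{A,B}$ in $\TT_!(\Gamma.A)$. This is strictly stable since it is the reindexing of a \emph{map} (not an object) along a strictly natural name map. For $\lambda$-abstraction—the analogue of copairing, and the subtle step—I would \emph{not} simply abstract in $\CC$ over $\Gamma$, since the weakly stable products of $\CC$ carry no naturality and this would fail strict stability. Instead, given a section $t : \Gamma.A \myto \Gamma.A.B$ I form a larger representing universe $\baseof{\lambda(t)}$, presented in the internal language as $\exinterp{ a \of \baseof{A},\, b \of \baseof{B},\, t \of \Pi_{x \of \topof{A}(a)}\topof{B}(b(x)) }$ (expanding $\famop$ and adjoining a section of the universal $\Pi$), whose universal instance furnishes projections $\pi_A$, $\pi_B$, $\pi_t$. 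Over $\baseof{\lambda(t)}$ the weakly stable product structure supplies a universal abstraction $\lambda(\pi_t)$; the data $(\nameof{A},\nameof{B},t)$ determine a strictly natural map $\Gamma \myto \baseof{\lambda(t)}$, and I take $\lambda(t)$ to be the pullback of $\lambda(\pi_t)$ along it. Strict stability follows because $\Gamma$ enters only through this strictly natural map and the reindexing of the section $\lambda(\pi_t)$ along it. The computation rule $(\Gamma.A.\app_{A,B}) \comp (1_{\Gamma.A}, \lambda(t)) = t$ then follows by pulling back the corresponding universal equation over $\baseof{\lambda(t)}$, which holds because $\app_{\topof{A}[\pi_A],\topof{B}[\pi_B]}$ and $\lambda(\pi_t)$ were produced by a single weakly stable dependent product datum.

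\medskip

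\noindent The main obstacle I anticipate is bookkeeping the reindexings correctly in the abstraction step: unlike the $\beta$-rule for sums, the $\Pi$ computation rule involves the section $(1_{\Gamma.A},\lambda(t))$ and the application map together, so I must be careful that the universal equation formed over $\baseof{\lambda(t)}$—relating $\app_{\topof{A}[\pi_A],\topof{B}[\pi_B]}$ to the universal $\lambda(\pi_t)$—reindexes along the strictly natural name map to \emph{exactly} the required equation in $\TT_!(\Gamma.A)$, with no stray weakenings. This is where the convention (introduced in Lemma~\ref{lemma:binary-sums}) of freely reindexing maps to arbitrary reindexings of their domain and codomain does the real work, and where a scrupulous reader would want the diagram chased in full.
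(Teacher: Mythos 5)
Your proof follows the paper's own construction essentially step for step: the same use of $\baseof{A}\famop\baseof{B}$ as the formation universe, the same reindexing of a universal application map along the strictly natural name map, and the same two-stage treatment of $\lambda$-abstraction via a larger representing object. Formation, application, and computation are all handled exactly as in the paper.

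The one point that needs more care --- and it is the crux of the introduction step --- is what the third component of your representing object $\baseof{\lambda(t)}$ actually is. You write it as $t \of \Pi_{x\of\topof{A}(a)}\topof{B}(b(x))$ and gloss it as ``a section of the universal $\Pi$''. If by ``the universal $\Pi$'' you mean the weakly stable dependent product $\topof{\prod[A,B]}$ chosen in the formation step, the construction breaks: for the data $(\nameof{A},\nameof{B},t)$ to determine a map $\Gamma\myto\baseof{\lambda(t)}$ \emph{naturally in $\Gamma$}, the $t$-component must genuinely \emph{represent} sections of $[B]$ over $\Gamma.[A]$, i.e.\ it must have the honest universal property of Definition~\ref{def:cat-exponential}. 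A weakly stable $\Pi$-type has no such property --- converting the section $t$ into a point of $\topof{\prod[A,B]}[\nameof{\prod[A,B]}]$ is precisely the non-natural $\lambda$-abstraction in $\CC$ that you correctly identified as the thing to avoid. This is why the paper interprets that component using the \emph{categorical} exponential $\exprod$ supplied by condition \LF\ (and is one reason \LF\ demands exponentials of display maps along display maps): one sets $\baseof{\lambda_{A,B}} := \exinterp{a\of\baseof{A},\ b\of\baseof{B}^{\topof{A}(a)},\ t\of\exprod_{x\of\topof{A}(a)}\topof{B}(b(x))}$, takes its categorical evaluation map $\exapp$, and obtains the universal abstraction as $\lambda(\exapp)$ using the weakly stable product structure of $\topof{\prod[A,B]}$. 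With that reading your $\pi_t$ is $\exapp$ and your $\lambda(\pi_t)$ is $\lambda(\exapp)$, and the rest of your argument --- including pulling back the universal $\beta$-equation along $\nameof{(A,B,t)}$ --- goes through exactly as in the paper.
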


\begin{proof} Again, we consider the components of the definition---the four rules for $\Pi$-types---one by one.
 
\paragraph{Formation}
Suppose we \acmarxiv{are given}{have} $A=(\baseof{A},\topof{A},\nameof{A})$ in $\TT_{!}(\Gamma)$ and $B=(\baseof{B},\topof{B},\nameof{B})$ in $\TT_{!}(\Gamma.A)$, and wish to form $\prod[A,B]$.

We begin by setting
\begin{align*}
  \baseof{\prod[A,B]} & := \baseof{A} \famop \baseof{B}.
\end{align*}
(Recall \acmarxiv{that }{}$\baseof{A} \famop \baseof{B}$ is the object \acmarxiv{described}{given} in the internal language by $\exinterp{ a \of \baseof{A},\, b \of \baseof{B}^{\topof{A}(a)} }$.)

As described in Section~\ref{sec:manipulating-universes}, maps $\Gamma \myto \baseof{A} \famop \baseof{B}$ correspond to data $\nameof{A} : \Gamma \myto \baseof{A}$, $\nameof{B} : \Gamma.\topof{A}[\nameof{A}] \myto \baseof{B}$ as above.
In particular, there is the universal case
\[ \pi_A : \baseof{A} \famop \baseof{B} \myto \baseof{A}, \qquad
   \pi_B : (\baseof{A} \famop \baseof{B}).(\topof{A}[\pi_A]) \myto \baseof{B} \]
over $\baseof{\prod[A,B]}$ itself.
To obtain $\topof{\prod[A,B]}$, we choose a \stable{3}{weakly stable} dependent product in $\CC$ for this universal case:
\[  \topof{\prod[A,B]} := \prod \left[ \topof{A}[\pi_A],\topof{B}[\pi_B] \right]
    \qquad \in \TT(\baseof{\prod[A,B]}).
\]

Finally, we take $\nameof{\prod[A,B]} : \Gamma \myto \baseof{\prod[A,B]}$ to be the map corresponding to the pair $\nameof{A}$, $\nameof{B}$ under the universal property of $\baseof{\prod[A,B]}$. \\

Together, these define the type $\prod[A,B]$ in $\CC_!$.
To see that the resulting operation is moreover \stable{0}{strictly stable}, suppose we have $\Gamma$, $A$, $B$ as above, and additionally some $\sigma : \Gamma' \myto \Gamma$.
We need to check that $\prod[A,B][\sigma] = \prod[A[\sigma],B[\sigma]]$.

It is immediate that the local universes of these two products are the same, since they depend only on the local universes $\baseof{A}$, $\baseof{B}$, which are unaffected by the reindexing.

It therefore only remains to show that $\nameof{\prod[A,B][\sigma]} = \nameof{\prod[A[\sigma],B[\sigma]]}$; but this follows just from the (strict) naturality in $\Gamma$ of the universal property of $\baseof{A} \famop \baseof{B}$.

\paragraph{Application}

By the definition of $\topof{\prod[A,B]}$ as a \stable{3}{weakly stable} dependent product, it comes with a
map
\[ \app_{\topof{A}[\pi_A],\topof{B}[\pi_B]}
 : \baseof{\prod[A,B]}.\topof{A}[\pi_A].\topof{\prod[A,B]}
  \myto \baseof{\prod[A,B]}.\topof{A}[\pi_A].\topof{B}[\pi_B] \]
over $\baseof{\prod[A,B]}.\topof{A}[\pi_A]$.

We define $\app_{A,B}$ just as the reindexing of this map to $\TT(\Gamma.A)$:
\begin{align*}
  \begin{tikzpicture}[auto]
    \node (GAB) at (0,3) {$\Gamma.A.B$};
    \node (GA) at (0,0) {$\Gamma.A$};
    \node (VAB) at (7,3) {$\baseof{\prod[A,B]}.\topof{A}[\pi_A].\topof{B}[\pi_B] $};
    \node (VA) at (7,0) {$\baseof{\prod[A,B]}.\topof{A}[\pi_A].$};
    \node (VAP) at (4.5,1.25) {$\baseof{\prod[A,B]}.\topof{A}[\pi_A].\topof{\prod[A,B]}$};
    \node (GAP) at (-2.5,1.25) {$\Gamma.A.\prod[A,B]$};
    \draw[->] (GAB) to (GA);
    \draw[->] (GA) to node[mylabel] {$\nameof{\prod[A,B]}.\topof{A}$} (VA);
    \draw[->] (VAB) to (VA);
    \draw[->] (GAB) to (VAB);
    \draw[->,cross line] (GAP) to (VAP);
    \draw[->,bend right=15] (GAP) to (GA);
    \draw[->,bend right=15] (VAP) to (VA);
    \draw[->] (VAP) to node[mymidlabel] {$\app_{\topof{A}[\pi_A],\topof{B}[\pi_B]}$} (VAB);
    \draw[->,dashed] (GAP) to node[mymidlabel] {$\app_{A,B}$} (GAB);
  \end{tikzpicture}
\end{align*}

Once again, \stable{0}{strict stability} of this follows directly by construction.
The universal case $\app_{\topof{A}[\pi_A],\topof{B}[\pi_B]}$ depends only on $\baseof{A}$, $\baseof{B}$; and the subsequent reindexing is strictly natural in $\Gamma$.

\paragraph{Introduction}
As in the case of copairing above, there is a direct approach to defining $\lambda$-abstraction, which however may fail to be \stable{0}{strictly stable}.
We therefore take once again a two-stage approach.
First, we define the object $\baseof{\lambda_{A,B}}$ of all possible $\lambda$-abstractions into $\topof{\prod{A,B}}$, and choose a universal $\lambda$-abstraction over that; then, we pick out the $\lambda$-abstractions in $\CC_{!}$ as pullbacks of that universal one.

Let $\baseof{\lambda_{A,B}}$ be the object:
\[ \exinterp{
  a : \baseof{A},\,
  b : \baseof{B}^{\topof{A}(a)},\, 
  t : \exprod_{x \of \topof{A}(a)}\topof{B}(b(x))
}\]
Here we write $\exprod$ to emphasize that this description is interpreted using the categorical exponentials in $\CC$ provided by condition \LF, not the \stable{3}{weakly stable} dependent products of types in $\TT$ used for $\topof{\prod[A,B]}$.
Modulo that difference, this is exactly analogous to the object $\baseof{\prod[A,B]}.\topof{\prod[A,B]}$.

Write $\pi_{\lambda_{A,B}}$ for the evident projection $\baseof{\lambda_{A,B}} \myto \baseof{\prod[A,B]}$.

As a categorical dependent product, $\baseof{\lambda_{A,B}}$ has application map 
\[ \exapp : \baseof{\lambda_{A,B}} .  \topof{A}[\pi_A \comp \pi_{\lambda_{A,B}}] 
  \myto
 \baseof{\lambda_{A,B}} .  \topof{A}[\pi_A \comp \pi_{\lambda_{A,B}}] . \topof{B}[\pi_A \comp \pi_{\lambda_{A,B}}.\topof{A}]. \]

Since $\topof{\prod[A,B]}$ was a weakly stable dependent product, $\exapp$ induces a map 
\[ \lambda(\exapp) : \baseof{\lambda_{A,B}}
  \myto \baseof{\lambda_{A,B}} . \topof{\prod[A,B]}[\pi_{\lambda_{A,B}}] \]
with $\app_{\topof{A}[\pi_A],\topof{B}[\pi_B]}[\pi_{\lambda_{A,B}}] \comp \lambda(\exapp)[\chi(\topof{A}[\pi_A \comp \pi_{\lambda_{A,B}}])] = \exapp$.

Now, suppose we are given the inputs for $\lambda$-abstraction in $\CC_!$.
That is, in addition to $\Gamma$, $A$, $B$ as before, we have a section $t : \Gamma.A \myto \Gamma.A.B$.

By the universal property of the categorical dependent product, such tuples $(\nameof{A},\nameof{B},t)$ correspond naturally to maps $\nameof{(A,B,t)} : \Gamma \myto \baseof{\lambda_{A,B}}$.
So, we may take $\lambda(t) : \Gamma \myto \Gamma.\prod[A,B]$ to be the reindexing of $\lambda(\exapp)$ along $\nameof{(A,B,t)}$:
\begin{align*}
  \begin{tikzpicture}[auto]
    \node (G) at (0,0) {$\Gamma$};
    \node (G2) at (-2.5,1.25) {$\Gamma$};
    \node (GP) at (0,3) {$\Gamma.\prod[A,B]$};
    \node (V) at (7,0) {$\baseof{\lambda_{A,B}}$};
    \node (V2) at (4.5,1.25) {$\baseof{\lambda_{A,B}}$};
    \node (VP) at (7,3) {$\baseof{\lambda_{A,B}} . \topof{\prod[A,B]}[\pi_{\lambda_{A,B}}]$};
    \draw[->] (GP) to (G);
    \draw[->] (G) to node[mylabel] {$\nameof{(A,B,t)}$} (V);
    \draw[->] (VP) to (V);
    \draw[->] (GP) to (VP);
    \draw[->,cross line] (G2) to (V2);
    \draw[->,bend right=15] (G2) to node[mylabel,swap] {$1_{\Gamma}$} (G);
    \draw[->,bend right=15] (V2) to (V);
    \draw[->] (V2) to node[mymidlabel] {$\lambda(\exapp)$} (VP);
    \draw[->,dashed] (G2) to node[mymidlabel] {$\lambda(t)$} (GP);
  \end{tikzpicture}
\end{align*}

\stable{0}{Strict stability} is immediate by construction, just as for $\app_{A,B}$.

\paragraph{Computation} Finally, the $\beta$-reduction equation for $\lambda(t)$ in $\CC_!$ follows from the corresponding equation for the universal case $\lambda(\exapp)$, which holds by its construction as a $\lambda$-abstraction into $\topof{\prod[A,B]}$.
\end{proof}

Often, one may want to restrict the $\Pi$-types used to some well-behaved or well-understood subclass---typically, the categorical dependent products.
Indeed, one might want the same for other constructors as well; we spell out the case of $\Pi$-types since we will need it for setting up weakly stable $\W$-types below.

\begin{definition}
  A \myemph{stable class of $\Pi$-types} on $\CC$ consists of:
  \begin{itemize}
  \item for each $\Gamma$, $A$, $B$, a non-empty family $\GG_\Pi(\Gamma,A,B)$ of $\Pi$-types $(\Pi,\app)$ for $A$, $B$, stable under reindexing, in that for all $\sigma : \Delta \myto \Gamma$ and $(\Pi,\app) \in \GG_\Pi(\Gamma,A,B)$, and any reindexings $A'$, $B'$, $\Pi'$, $\app'$ of these along $\sigma$, we have $(\Pi',\app') \in \GG_\Pi(\Gamma',A',B')$;
  \item and, for each $\Gamma$, $A$, $B$ as before, $(\Pi,\app) \in \GG_\Pi(\Gamma,A,B)$, and section $t : \Gamma.A \myto \Gamma.A.B$, a non-empty family of sections $\GG_{\lambda}(\Gamma,\ldots,t)$, similarly stable under reindexing.
  \end{itemize}

  Given such a class, we refer to an element of $\GG_\Pi(\Gamma,A,B)$ (resp.\ $\GG_{\lambda}(\Gamma,\ldots,t)$) as a \myemph{good} $\Pi$-type for $A$, $B$ (resp.\ a good $\lambda$-abstraction of $t$).
  Stability says just that any reindexing of a good $\Pi$-type or $\lambda$-abstraction is again good.
\end{definition}

\begin{scholium} \label{sch:good-pi-types}
  If $\CC$ is equipped with a stable class of dependent products, then $\CC_!$ has \stable{0}{strictly stable} dependent products, always chosen  from the given stable class. 
\end{scholium}

\begin{proof}
  Immediate from the proof of Lemma~\ref{lemma:pi-types}.
\end{proof}

\begin{proposition} \label{prop:stable-class-from-pseudo-stable}
$\CC$ has \stable{3}{weakly stable} $\Pi$-types if and only if it can be equipped with a stable class of $\Pi$-types.
\end{proposition}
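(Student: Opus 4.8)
The plan is to prove the two implications separately, using in both directions the family of \emph{all} weakly stable dependent products (and all their valid $\lambda$-abstractions) as the canonical stable class. The reverse implication is the quicker one, so I would dispatch it first. Suppose $\CC$ is equipped with a stable class of $\Pi$-types. I claim that any chosen element $(\Pi,\app) \in \GG_\Pi(\Gamma,A,B)$ — which exists since the family is non-empty — is already a weakly stable dependent product for $\Gamma$, $A$, $B$. Given $\sigma : \Delta \myto \Gamma$, stability of $\GG_\Pi$ gives $(\Pi[\sigma],\app[\sigma]) \in \GG_\Pi(\Delta,A[\sigma],B[\sigma])$; then for each section $t$ over $\Delta.A[\sigma]$ the family $\GG_\lambda(\Delta,\ldots,t)$ is non-empty, and choosing an element for each $t$ assembles into an operation $\lambda$. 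Since good $\lambda$-abstractions satisfy the $\beta$-equation by definition, $(\Pi[\sigma],\app[\sigma],\lambda)$ is a dependent product, and as $\sigma$ was arbitrary, $(\Pi,\app)$ is weakly stable.

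For the forward implication, suppose $\CC$ has weakly stable $\Pi$-types, and define $\GG_\Pi(\Gamma,A,B)$ to be the collection of \emph{all} weakly stable dependent products for $\Gamma$, $A$, $B$, and $\GG_\lambda(\Gamma,\ldots,t)$ to be the collection of all valid $\lambda$-abstractions of $t$ (sections satisfying the $\beta$-equation) with respect to the given good $(\Pi,\app)$. Non-emptiness of $\GG_\Pi$ is exactly the hypothesis; non-emptiness of $\GG_\lambda$ follows by instantiating weak stability at $\sigma = 1_\Gamma$, which (using the trivial cartesian lift) exhibits $(\Pi,\app)$ as extending to a genuine dependent product, hence as admitting at least one $\lambda$-abstraction of each $t$. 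It then remains only to verify stability of both families under reindexing.

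The stability of $\GG_\Pi$ is the crux of the argument. Given a weakly stable $(\Pi,\app)$ over $\Gamma$, a map $\sigma : \Delta \myto \Gamma$, and \emph{any} reindexing $(\Pi',\app')$ along $\sigma$, I must show $(\Pi',\app')$ is again weakly stable. The key observation is that a reindexing of $(\Pi',\app')$ along any $\tau : \Theta \myto \Delta$ is simultaneously a reindexing of $(\Pi,\app)$ along $\sigma \comp \tau$, since cartesian lifts compose to cartesian lifts of the composite; weak stability of $(\Pi,\app)$ at $\sigma \comp \tau$ — invoked in its cleaving-independent form, as noted after its definition — then supplies the required filler over $\Theta$. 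Stability of $\GG_\lambda$ is the routine part: reindexing is functorial on the $\beta$-equation, so a reindexing of a valid $\lambda$-abstraction of $t$ is a valid $\lambda$-abstraction of the corresponding reindexing of $t$.

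The step I expect to be the main obstacle is not any deep content but the bookkeeping around cleavings: everything must be phrased in terms of arbitrary cartesian lifts rather than the chosen cleaving of $\TT$, so that the composition-of-reindexings step above is legitimate and genuinely independent of any splitting. This is precisely why weak stability was formulated to be cleaving-independent, and I would state that reliance explicitly so that the $\sigma \comp \tau$ manipulation is justified rather than assumed.
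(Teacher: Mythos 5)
Your proof is correct and takes essentially the same route as the paper, which simply notes that the ``if'' direction is immediate and that for ``only if'' the class of \emph{all} weakly stable $\Pi$-types and $\lambda$-abstractions forms a stable class. Your expansion of the stability check for $\GG_\Pi$ (composing reindexings and invoking the cleaving-independence of weak stability at $\sigma \comp \tau$) is exactly the detail the paper's one-line proof leaves implicit.
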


\begin{proof}
  The ``if'' direction is immediate.
  For the ``only if'', note that if $\CC$ has \stable{3}{weakly stable} $\Pi$-types, then the class of \myemph{all} \stable{3}{weakly stable} $\Pi$-types and $\lambda$-abstractions forms a stable class.
\end{proof}

Finally, we pause to consider the \stable{1}{pseudo-stable} level, again for later use in the presentation of $\W$-types:

\begin{definition}
  $\CC$ has \myemph{\stable{1}{pseudo-stable} dependent products} if it is equipped with operations $\Pi$, $\app$, $\lambda$ as above, together with a cartesian functorial action on cartesian maps; that is, for each $\sigma : \Gamma' \myto \Gamma$, and cartesian maps $\sigma_A : A' \to A$ over $\sigma$ and $\sigma_B : B' \to B$ over $\sigma.\sigma_A$, a cartesian map $\Pi[\sigma_A,\sigma_B] : \Pi[A',B'] \to \Pi[A,B]$ over $\sigma$, such that
  \begin{align*}
    \Pi[1_A,1_B] & = 1_{\Pi[A,B]} \\
    \Pi[\tau_A \comp \sigma_A,\tau_B \comp \sigma_B] & = \Pi[\tau_A,\tau_B] \comp \Pi[\sigma_A,\sigma_B] \\
    \sigma.\sigma_B \comp \app_{A',B'} & = \app_{A,B} \comp \sigma.\sigma_A.\Pi[\sigma_A,\sigma_B] \\
    \sigma.\Pi[\sigma_A,\sigma_B] \comp \lambda_{A',B'}(t[\sigma]) & = \lambda_{A,B}(t) \comp \sigma
  \end{align*}
  (for all suitable $\sigma, \sigma_A, \sigma_B, \tau, \tau_A, \tau_B, t$).
\end{definition}

\begin{proposition} \label{prop:good-class-from-pseudo-stable}
  If $\CC$ is equipped with pseudo-stable dependent products, then it carries a stable class of dependent products, consisting of all $\Pi$-types equipped with isomorphisms to those supplied by the pseudo-stable structure, and all $\lambda$-abstractions corresponding under those isomorphisms to the ones provided by the pseudo-stable structure. \qed
\end{proposition}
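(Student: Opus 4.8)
The plan is to check directly that the proposed families satisfy the two clauses of a stable class of $\Pi$-types; the only genuine content is stability under reindexing, and there the cartesian functorial action of the pseudo-stable structure is exactly the tool needed. First I would spell out the class. For each $\Gamma$, $A$, $B$, declare a \emph{good} $\Pi$-type to be a $\Pi$-type $(Q,\app_Q)$ for $A$, $B$ together with an isomorphism $\phi : Q \iso \Pi[A,B]$ in $\TT(\Gamma)$ compatible with application, i.e.\ with $\app_{A,B} \comp \phi[\chi(A)] = \app_Q$; and for a section $t : \Gamma.A \myto \Gamma.A.B$, declare the \emph{good} $\lambda$-abstraction to be the unique section $s : \Gamma \myto \Gamma.Q$ with $\Gamma.\phi \comp s = \lambda_{A,B}(t)$. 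Both families are non-empty, since $(\Pi[A,B],\app_{A,B})$ with $\phi = 1$ is good and $\lambda_{A,B}(t)$ is then its good $\lambda$-abstraction.

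For stability of $\GG_\Pi$, fix a good $\Pi$-type $(Q,\app_Q,\phi)$ over $\Gamma$, a map $\sigma : \Delta \myto \Gamma$, chosen reindexings $A'$, $B'$ with cartesian lifts $\sigma_A : A' \myto A$, $\sigma_B : B' \myto B$, and a reindexing $(Q',\app_{Q'})$ of $(Q,\app_Q)$ via a cartesian lift $\sigma_Q : Q' \myto Q$. I would produce the required isomorphism from uniqueness of cartesian lifts: the pseudo-stable structure supplies a cartesian map $\Pi[\sigma_A,\sigma_B] : \Pi[A',B'] \myto \Pi[A,B]$ over $\sigma$, while $\phi \comp \sigma_Q : Q' \myto \Pi[A,B]$ is cartesian over $\sigma$ (a cartesian map followed by an isomorphism). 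As two cartesian lifts of $\Pi[A,B]$ along $\sigma$, they differ by a unique isomorphism $\psi : Q' \iso \Pi[A',B']$ over $\Delta$ with $\Pi[\sigma_A,\sigma_B] \comp \psi = \phi \comp \sigma_Q$. That $\psi$ is compatible with application then follows by a short diagram chase combining the compatibility of $\phi$, the pseudo-stability identity $\sigma.\sigma_B \comp \app_{A',B'} = \app_{A,B} \comp \sigma.\sigma_A.\Pi[\sigma_A,\sigma_B]$, and the naturality of map-reindexing that defines $\app_{Q'}$ from $\app_Q$; the final cancellation is justified because $\chi$ carries the cartesian $\Pi[\sigma_A,\sigma_B]$ to a pullback square. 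Hence $(Q',\app_{Q'},\psi)$ is again good.

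For stability of $\GG_\lambda$, take the good $\lambda$-abstraction $s$ of $t$, so $\Gamma.\phi \comp s = \lambda_{A,B}(t)$, and reindex to $s[\sigma]$ over $\Delta$. I would verify $\Delta.\psi \comp s[\sigma] = \lambda_{A',B'}(t[\sigma])$, which exhibits $s[\sigma]$ as the good $\lambda$-abstraction for $(Q',\app_{Q'},\psi)$ and $t[\sigma]$. Again this is a diagram chase: postcompose with $\sigma.\Pi[\sigma_A,\sigma_B]$, use $\sigma.\Pi[\sigma_A,\sigma_B] \comp \Delta.\psi = \Gamma.\phi \comp \sigma.\sigma_Q$ (from the defining relation for $\psi$) together with the naturality $\sigma.\sigma_Q \comp s[\sigma] = s \comp \sigma$ and the reindexed goodness relation, reducing one side to $\lambda_{A,B}(t) \comp \sigma$; the pseudo-stability identity $\sigma.\Pi[\sigma_A,\sigma_B] \comp \lambda_{A',B'}(t[\sigma]) = \lambda_{A,B}(t) \comp \sigma$ matches it to the other side, and the pullback property of $\sigma.\Pi[\sigma_A,\sigma_B]$ cancels the common factor.

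The step I expect to cost the most care is not conceptual but bookkeeping: keeping straight the three distinct reindexing/comprehension operations involved — the fiberwise $\phi[\chi(A)]$, the action $\sigma.\sigma_A$ on context extensions, and the canonical reindexing of the sections $s$, $t$ — so that the comparison $\psi$ and the two pseudo-stability square identities are composed with matching domains and codomains. Once the objects are correctly aligned, each verification is a single application of one pseudo-stability axiom plus uniqueness of cartesian lifts, with no further construction required; this is why the statement can reasonably be left with \qed.
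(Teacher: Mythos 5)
Your verification is correct and is exactly the routine check the paper leaves implicit by stating the proposition with an immediate \qed: uniqueness of cartesian lifts gives the comparison isomorphism $\psi$, and the two pseudo-stability squares for $\app$ and $\lambda$ transfer along it. The only point worth adding explicitly is that your $s = (\Gamma.\phi)^{-1}\comp\lambda_{A,B}(t)$ really is a $\lambda$-abstraction for $(Q,\app_Q)$, i.e.\ satisfies the $\beta$-equation of Definition~\ref{def:dep-products} — but this is a one-line consequence of the compatibility $\app_{A,B}\comp\phi[\chi(A)]=\app_Q$ that you already impose.
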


\subsubsection{Identity types}

The identity types we consider in this section will be slightly stronger than those set out in Section~\ref{sec:stability-conditions} above.
Specifically, we consider structure corresponding to the elimination rule:
\begin{mathpar}
  \inferrule*{\Gamma,\ x,y \of A,\ u \of \id[A](x,y),\ \Delta \types C(x,y,u) \type \\
              \Gamma,\ z \of A,\ \Delta[z/x,z/y,\rr(z)/u] \types d(z):C(z,z,\rr(z)) }
             {\Gamma,\ a,b \of A,\ p \of \id[A](x,y),\ \Delta[a/x,b/y,p/u] \types \Jelim[x,y,u.C;\, z. d](a,b,p) : C(a,b,p)}
\end{mathpar}
where $\Delta$ may be an arbitrary context extension.

Often, $\Delta$ is omitted in the basic definition of identity types, and the version with it is called \myemph{strong} or \myemph{Frobenius} identity types.
In the presence of $\Pi$-types, the two forms are interderivable, so the weak form suffices.
In general, however, the Frobenius form is the more important and more natural; so that is the form we consider here, and throughout this section, \myemph{identity types} will always refer to the Frobenius form.
(As ever, though, the construction works for either set of rules.)

Given this, we make some slight modifications to the definitions of Sec.~\ref{sec:stability-conditions}.

\begin{definition}
  A \myemph{(Frobenius) identity type} for $\Gamma \in \CC$, $A \in \TT(\Gamma)$ consists of $\id[A]$, $\rr_A$ as in Definition~\ref{def:weak-id-types}, together with for each sequence $\Delta = (B_1,\ldots,B_n)$ such that $B_i \in \TT(\Gamma.A.A.\id[A].B_1. \,\ldots\, .B_{i-1})$, each $C\in\TT(\Gamma.A.A.\id[A].\Delta)$, and each $d:\Gamma.A.\Delta[\rr_A] \myto \Gamma.A.A.\id[A].\Delta.C$ such that the square
  \begin{align*}
    \begin{tikzpicture}[auto]
      \node (UL) at (0,1.75) {$\Gamma.A.\Delta[\rr_A]$};
      \node (BL) at (0,0) {$\Gamma.A.A.\id[A].\Delta$};
      \node (BR) at (3.3,0) {$\Gamma.A.A.\id[A].\Delta$};
      \node (UR) at (3.3,1.75) {$\Gamma.A.A.\id[A].\Delta.C$};
      \draw[->] (UL) to node[mylabel] {$d$} (UR);
      \draw[->] (UL) to node[mylabel,swap] {$\rr_{A}$} (BL);
      \draw[->] (BL) to node[mylabel,swap] {$1$} (BR);
      \draw[->] (UR) to (BR);
      \draw[->,dashed] (BL) to node[mymidlabel] {$\jj_{A,\Delta,C,d}$} (UR);
    \end{tikzpicture}
  \end{align*}
  commutes, a diagonal filler $\jj_{A,\Delta,C,d}$, which we call an \myemph{elimination section} for the data $A,\Delta,C,d$.

  A choice of identity types on $\CC$ is \myemph{\stable{0}{strictly stable}} if for each $\sigma : \Gamma' \myto \Gamma$, and all appropriate $A$, $\Delta$, $C$, $d$,
  \begin{align*}
    \id[A][\sigma] & = \id[{A[\sigma]}]\\
    \rr_{A}[\sigma] & = \rr_{A[\sigma]}\\
    \jj_{A,\Delta,C,d}[\sigma] & = \jj_{A[\sigma],\Delta[\sigma],C[\sigma],d[\sigma]}.
  \end{align*}

  A \myemph{\stable{3}{weakly stable} identity type} for $A \in \TT(\Gamma)$ is $(\id,\rr)$ as above such that, for all $\sigma:\Gamma' \myto \Gamma$, there exists some $\jj$ making $(\id[][\sigma],\rr[\sigma],\jj)$ an identity type for $A[\sigma]$.
 Say $\CC$ \myemph{has \stable{3}{weakly stable} identity types} if for each $\Gamma$, $A$, there exists some \stable{3}{weakly stable} identity type.
\end{definition}

\begin{lemma} \label{lemma:identity-types}
If $\CC$ has \stable{3}{weakly stable} identity types and satisfies condition \LF, then $\CC_!$ has \stable{0}{strictly stable} identity types.
\end{lemma}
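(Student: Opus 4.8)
The plan is to follow the template of Lemmas~\ref{lemma:binary-sums} and~\ref{lemma:pi-types} exactly, treating in turn the four components of a Frobenius identity type---formation, reflexivity, elimination, and computation---and checking strict stability of each. For formation the only type premise is $A$ itself, so its local universe $(\baseof{A},\topof{A})$ already represents the premises; in the universal case I would choose a weakly stable identity type $(\id[\topof{A}],\rr_{\topof{A}})$ for $\topof{A}$ in $\CC$, living over $\baseof{A}.\topof{A}.\topof{A}$, and set $\baseof{\id[A]} := \baseof{A}.\topof{A}.\topof{A}$, $\topof{\id[A]} := \id[\topof{A}]$, and $\nameof{\id[A]} : \Gamma.A.A \myto \baseof{A}.\topof{A}.\topof{A}$ the map induced from $\nameof{A}$ by comprehension. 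Weak stability then guarantees that $[\id[A]] = \id[\topof{A}][\nameof{\id[A]}]$ is an identity type for $[A]$ over $\Gamma.A.A$. The reflexivity map $\rr_A$ would be the reindexing of the universal $\rr_{\topof{A}}$ along $\nameof{\id[A]}$. Strict stability of both is immediate, since the local universe depends only on $(\baseof{A},\topof{A})$---untouched by reindexing---while $\nameof{\id[A]}$ and $\rr_A$ are built functorially from $\nameof{A}$, hence strictly natural in $\Gamma$.

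The substance of the proof is the elimination rule, and here---exactly as for copairing in Lemma~\ref{lemma:binary-sums}---I would resist the tempting shortcut of taking an elimination section of $\CC$ directly over $\Gamma$, since the chosen filler need not be natural and the result would fail to be strictly stable. Instead I would build a universal eliminator over a representing object and reindex it. The inputs are $\Gamma$, $A$, a telescope $\Delta = (B_1,\ldots,B_n)$, a type $C$ over $\Gamma.A.A.\id[A].\Delta$, and a section $d$; in $\CC$ these amount to local universes $(\baseof{B_i},\topof{B_i})$ and $(\baseof{C},\topof{C})$ together with name maps and $d$. Using the products and exponentials supplied by \LF, I would form (in the internal language of Section~\ref{sec:manipulating-universes}) an object $\baseof{\jj}$ depending only on the local universes $\baseof{A}$, $\baseof{B_i}$, $\baseof{C}$, whose maps $\Gamma \myto \baseof{\jj}$ correspond naturally to tuples $(\nameof{A},\nameof{B_1},\ldots,\nameof{B_n},\nameof{C},d)$. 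The identity $1_{\baseof{\jj}}$ then yields a universal such tuple, with universal name maps $\pi_A,\pi_{B_i},\pi_C$ and a universal section $\pi_d$ assembling into a universal instance of the elimination data over $\baseof{\jj}$. Since $\topof{\id[A]}$ was chosen weakly stable, its reindexing along $\pi_A$ is again an identity type over $\baseof{\jj}$, for which $\pi_C$ and $\pi_d$ are exactly elimination input, so I obtain a universal elimination section over $\baseof{\jj}$. Reindexing it along the classifying map $(\nameof{A},\nameof{B_1},\ldots,\nameof{B_n},\nameof{C},d) : \Gamma \myto \baseof{\jj}$ gives $\jj_{A,\Delta,C,d}$, and the computation equation $\jj_{A,\Delta,C,d} \comp \rr_A = d$ comes by reindexing the corresponding universal equation. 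Strict stability is again automatic: the dependence on $\Gamma$, the name maps, and $d$ is only through the classifying map and the reindexing of a fixed section along it---both strictly natural---while the universal section, chosen once over $\baseof{\jj}$, is unaffected by any $\sigma : \Gamma' \myto \Gamma$.

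The main obstacle I anticipate is purely the bookkeeping in constructing $\baseof{\jj}$ for the full Frobenius data with its arbitrary context extension $\Delta$: one must iterate the $\famop$-style construction of Section~\ref{sec:manipulating-universes} along the telescope and then verify carefully that the reindexed identity type $\topof{\id[A]}[\pi_A]$ over $\baseof{\jj}$ genuinely admits a filler for the universal elimination data, which is where weak stability is invoked. Conceptually this contributes nothing beyond the $(+)$- and $\Pi$-cases---once the premises of the elimination rule are assembled into a single representing object, strict stability drops out of strict naturality, exactly as in every other constructor.
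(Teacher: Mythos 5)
Your proposal follows the paper's proof essentially verbatim: the same choice of $\baseof{\id[A]} = \baseof{A}.\topof{A}.\topof{A}$ with a weakly stable $(\id[\topof{A}],\rr_{\topof{A}})$ for formation and introduction, and the same two-stage elimination via a representing object for the Frobenius data $(\nameof{A},\nameof{B_1},\ldots,\nameof{B_n},\nameof{C},d)$, a universal elimination section chosen over it by weak stability, and strictly natural reindexing along the classifying map. The anticipated difficulty you name---assembling the telescope $\Delta$ into the representing object via the internal language---is exactly where the paper spends its effort too, so the proposal is correct and matches the paper's route.
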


\begin{proof} As usual, we consider the rules one by one.

\paragraph{Formation}

Given $A \in\TT_{!}(\Gamma)$, choose some \stable{3}{weakly stable} identity type $(\id[\topof{A}],\rr_{\topof{A}})$ for $\topof{A}$ over $\baseof{A}$, and define:
\begin{align*}
  \baseof{\id[A]} & := \baseof{A}.\topof{A}.\topof{A} \\
  \topof{\id[A]} & := \id[\topof{A}] \\
  \nameof{\id[A]} & := \nameof{A}.\topof{A}.\topof{A} : \Gamma.A.A \myto \baseof{\id[A]}
\end{align*}

As usual, this is \stable{0}{strictly stable} just since $\baseof{\id[A]}$ and $\topof{\id[A]}$ do not depend on $\Gamma$, $\nameof{A}$, while the construction of $\nameof{\id[A]}$ is strictly natural in $\Gamma$.

\paragraph{Introduction}

For the reflexivity map, take 
\[ \rr_A := \rr_{\topof{A}}[\nameof{A}] \]
where $\rr_{\topof{A}}$ is the reflexivity map of the chosen \stable{3}{weakly stable} identity type $\id[\topof{A}]$.

Again, \stable{0}{strict stability} is immediate.

\paragraph{Elimination, computation}

Let $\Gamma$, $A$, $\Delta = (B_1,\ldots,B_n)$, $C$, $d$ be instances of the premises of $\id$-elimination in $\CC_!$.
(\acmarxiv{So, in}{In} particular, $B_i \in \TT_!(\Gamma.A.A.\id[A].B_1.\ldots.B_{i-1})$.)

As usual, we work by first fixing the universes $\baseof{A}$, $\baseof{B_1}, \ldots, \baseof{B_n}$, $\baseof{C}$, and constructing a representing object $\baseof{}$ for ``data as above, with the given universes''.
Due to the Frobenius premise $\Delta$, this is slightly more involved than other rules we have considered.

In the internal language, it may be expressed as
\begin{equation*}
  \begin{split}
    \baseof{} := \lexinterp
      & a : \baseof{A}, \\
      & b_1 : \prod {x,x' \of \topof{A}(a),\, y \of \id[\topof{A}](a,x,x')}.\ \baseof{B_1}, \\
      & b_2 : \prod {x,x' \of \topof{A}(a),\, y \of \id[\topof{A}](a,x,x'),\,
                       z_1 \of \topof{B_1}(b_1(x,x',y))}.\ \baseof{B_2},  \\
      & \ldots \\
      & c : \prod {x,x',y,z_1,\ldots,z_n}.\ \baseof{C} \\  
      & d : \prod {x : \topof{A},\, z_1 : \topof{B_1}(b_1(x,x,\rr_{\topof{A}}(a,x))),\ \ldots,\ }\\
      & \phantom{d : \prod {x : \topof{A}},}\ {z_n : \topof{B_n}(b_n(x,x,\rr_{\topof{A}}(a,x),z_1,\ldots,z_{n-1}))},\ \\
      & \phantom{d : }\ \topof{C}(c(x,x,\rr_{\topof{A}}(a,x),z_1,\ldots,z_n)) \rexinterp 
  \end{split}
\end{equation*}
(Here $(\id[\topof{A}],\rr_{\topof{A}})$ are the identity type used for $\id[A]$, $\rr_A$ above.)

Maps $\nameof{A,\Delta,C,d} : \Gamma \myto \baseof{}$ now correspond, naturally in $\Gamma$, to \acmarxiv{tuples $(\nameof{A},\nameof{B_1},\ldots,\nameof{B_n},\nameof{C},d)$ over $\Gamma$}{tuples over $\Gamma$ $(\nameof{A},\nameof{B_1},\ldots,\nameof{B_n},\nameof{C},d)$} as in the original data.
In particular, the identity $1_{\baseof{}}$ corresponds to such data over $\baseof{}$ itself.
Since $(\id[\topof{A}],\rr_{\topof{A}})$ was \stable{3}{weakly stable}, we may choose some universal elimination section $\jj$ for this data:
\begin{multline*}
\jj : \exinterp{ (a,b_1,\ldots,c,d) \of \baseof{},\ x,x'\of \topof{A}(a),\, y \of \id[\topof{A}](a,x,x'),\,z_1,\ldots,z_n } \\
  \myto \exinterp{ (a,b_1,\ldots,c,d),\, x,x',y,\, z_1, \ldots, z_n,\, c \of \topof{C}(c(x,\ldots,z_n))}.
\end{multline*}

Returning to the original specific inputs $\Gamma$, $A$, $\Delta$, $C$, $d$, we can now pull this universal $\jj$ back along the representing map $\nameof{A,\Delta,C,d} : \Gamma \myto \baseof{}$ to give the required elimination section for $d$:
\[ \jj_{A,\Delta,C,d} := \jj[\nameof{A,\Delta,C,d}] : (\Gamma.A.A.\id[A].\Delta) \myto (\Gamma.A.A.\id[A].\Delta.C). \]

\stable{0}{Strict stability} follows, as usual, from the fact that this depends on $\Gamma$ only via the universal property of $\baseof{}$ and the action on morphisms of a pullback functor, both of which are suitably natural.

In particular, the choice of an elimination section $\jj$---the one operation which is \myemph{not} strictly natural in many models, and cannot easily be made so---was made over $\baseof{}$, and so depends only on the universes involved, not on $\Gamma$, or on anything else affected by reindexing in $\CC_!$.
\end{proof}

\subsubsection{Other constructors} \label{sec:other-constructors} \label{sec:last-strux-sec}

The three cases above illustrate essentially all the issues that arise in constructing structure on $\CC_!$.

For the remaining constructors, therefore, we give just the definitions of the appropriate \stable{0}{strictly}/\stable{3}{weakly stable} structure, and precise statements of the lifting lemmas.
We omit their proofs, since they follow exactly the same template as the cases above.

The definitions, too, contain just the same components as the cases above, with one exception, in the case of $\W$-types.
Since their rules refer to $\Pi$-types, the definition of \stable{3}{weakly stable} $\W$-types must be given relative to some form of $\Pi$-types---most naturally and flexibly, to a chosen stable class thereof.
This is the only new twist appearing in the definitions below, and indicates more generally how one might extend the present results to other type-formers whose rules make reference to other previously-defined types.

\paragraph{Dependent sums}

\begin{definition}
  For $\Gamma \in \CC$,  $A \in \TT(\Gamma)$, and $B \in \TT(\Gamma.A)$, a \myemph{dependent sum} for $B$ consists of:
  \begin{itemize}
  \item a type $\Sigma_A B \in \TT(\Gamma)$;
  \item a “pairing” map $\dpair{-}{-} : \Gamma.A.B \myto \Gamma.\Sigma_A B$; \opt{arxiv}{such that}
  \item \acmarxiv{such that }{}for any type $C \in \TT(\Gamma . \Sigma_A B)$ and section $d : \Gamma.A.B \myto \Gamma.A.B.C[\dpair{-}{-}]$, there is a section $\dsplit{C}{d} : \Gamma . \Sigma_A B \myto \Gamma . \Sigma_A B . C$, with $\dsplit{C}{d} \comp \dpair{-}{-} = (\dpair{-}{-}.C) \comp d$.
  \end{itemize}

  $\CC$ has \myemph{\stable{3}{weakly stable} dependent sums} if \acmarxiv{each $\Gamma$, $A$, $B$ as above has some}{for each $\Gamma$, $A$, $B$ as above, there exists some} $(\Sigma_A B, \dpair{-}{-})$, such that for each $\sigma : \Delta \myto \Gamma$, $(\Sigma_A B[\sigma], \dpair{-}{-}[\sigma])$ is a dependent sum for $A[\sigma]$ and $B[\sigma]$ over $\Delta$.

  A split comprehension category $\CC$ has \myemph{\stable{0}{strictly stable} dependent sums} if it is equipped with functions giving for each $\Gamma$, $A$, $B$ a \opt{acm}{chosen} dependent sum $(\Sigma_A B, \dpair{-}{-})$, and moreover for each suitable $C, d$ some appropriate section $\dsplit{C}{d}$, all commuting on the nose with reindexing in $\CC$.
\end{definition}

\begin{lemma} \label{lemma:dependent-sums}
If $\CC$ has \stable{3}{weakly stable} dependent sums and satisfies condition \LF, then $\CC_!$ has \stable{0}{strictly stable} dependent sums. \qed
\end{lemma}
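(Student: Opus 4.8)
The plan is to follow exactly the template established in Lemmas~\ref{lemma:binary-sums} and~\ref{lemma:pi-types}, treating the four rules for dependent sums (formation, pairing, elimination, computation) in turn. Since the premises of $\Sigma$-formation---a type $A$ over $\Gamma$ together with a dependent type $B$ over $\Gamma.A$---are precisely those of $\Pi$-formation, the representing object constructed in Section~\ref{sec:manipulating-universes} applies verbatim. For formation, given $A = (\baseof{A},\topof{A},\nameof{A})$ in $\TT_!(\Gamma)$ and $B = (\baseof{B},\topof{B},\nameof{B})$ in $\TT_!(\Gamma.A)$, I would set $\baseof{\Sigma_A B} := \baseof{A} \famop \baseof{B}$ with universal projections $\pi_A$, $\pi_B$, choose a weakly stable dependent sum in $\CC$ for the universal case, $\topof{\Sigma_A B} := \Sigma_{\topof{A}[\pi_A]} \topof{B}[\pi_B]$, and take $\nameof{\Sigma_A B} : \Gamma \myto \baseof{\Sigma_A B}$ to be the map corresponding to $(\nameof{A},\nameof{B})$. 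For pairing, the chosen weakly stable sum carries a pairing map over $\baseof{\Sigma_A B}$, and I would define $\dpair{-}{-}$ in $\CC_!$ by reindexing this universal pairing along $\nameof{\Sigma_A B}$. In both cases strict stability in $\Gamma$ is immediate: the universes depend only on $\baseof{A}$, $\baseof{B}$, and the name map and subsequent reindexing of morphisms are strictly natural in $\Gamma$, exactly as for $\Pi$-types.

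The elimination and computation rules are the step requiring genuine care, since the pitfall flagged in the binary-sums proof recurs: the section $\dsplit{C}{d}$ must not be formed directly over $\Gamma$, but over a representing object. Given the full data $\Gamma$, $A$, $B$, $C = (\baseof{C},\topof{C},\nameof{C})$, and a section $d : \Gamma.A.B \myto \Gamma.A.B.C[\dpair{-}{-}]$, I would fix the universes $\baseof{A}$, $\baseof{B}$, $\baseof{C}$ and form, in the internal language, the object
\begin{equation*}
\begin{split}
\baseof{} := \lexinterp\
& a \of \baseof{A},\ b \of \baseof{B}^{\topof{A}(a)},\
  c \of \baseof{C}^{\Sigma_{x \of \topof{A}(a)} \topof{B}(b(x))}, \\
& d \of \Pi_{x \of \topof{A}(a)}\Pi_{y \of \topof{B}(b(x))} \topof{C}(c(\dpair{x}{y})) \rexinterp
\end{split}
\end{equation*}
representing tuples $(\nameof{A},\nameof{B},\nameof{C},d)$ with these universes, naturally in $\Gamma$, so that the universal such datum is classified by $1_{\baseof{}}$. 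Since $\topof{\Sigma_A B}$ was chosen weakly stable, its reindexing to $\baseof{}$ along the evident projection $\baseof{} \myto \baseof{\Sigma_A B}$ is again a dependent sum, so I may form a universal split section over $\baseof{}$. Pulling this back along the representing map $\nameof{A,B,C,d} : \Gamma \myto \baseof{}$ then yields $\dsplit{C}{d}$ in $\CC_!$, and the computation equation follows by pulling back the corresponding equation for the universal split. Strict stability holds because the construction depends on $\Gamma$ only through the strictly natural universal property of $\baseof{}$ and the action of a pullback functor on morphisms; crucially, the choice of split section---the single operation not guaranteed natural in $\CC$---is made once and for all over $\baseof{}$, and so is untouched by any reindexing $\Gamma' \myto \Gamma$.

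The main obstacle, such as it is, lies entirely in correctly assembling the representing object $\baseof{}$ and verifying its universal property (that maps $\Gamma \myto \baseof{}$ classify exactly the elimination data over $\Gamma$, naturally in $\Gamma$); once this is in place, appeal to weak stability for the universal split and strict naturality of pullback finish the argument exactly as in the three worked cases above. I therefore expect the proof to be routine modulo this bookkeeping, which is why it may reasonably be omitted.
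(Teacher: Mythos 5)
Your proposal is correct and follows exactly the template the paper itself invokes for this lemma (whose proof it omits as "following the same template"): formation and pairing via a weakly stable sum over $\baseof{A} \famop \baseof{B}$, and elimination via a universal split section over a representing object for the data $(\nameof{A},\nameof{B},\nameof{C},d)$, pulled back along the classifying map. The representing object you write down is the right one, and your identification of the elimination step as the only place where the non-naturality pitfall must be avoided matches the paper's discussion in the binary-sums and identity-types cases.
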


\paragraph{Zero types}

\begin{definition}
  Given  $\Gamma \in \CC$, a \myemph{zero type over $\Gamma$} consists of:
  \begin{itemize}
  \item a type $\zero \in \TT(\Gamma)$;
  \item for any type $C \in \TT(\Gamma.\zero)$, a section of $C$.
  \end{itemize}

  $\CC$ has \myemph{\stable{3}{weakly stable} zero types} if for each $\Gamma$, there exists some type $\zero$ over $\Gamma$ such that for every $\sigma : \Delta \myto \Gamma$, $\zero[\sigma]$ is a zero type over $\Delta$.

   A split comprehension category $\CC$ has \myemph{\stable{0}{strictly stable} zero types} if it is equipped with functions giving for each $\Gamma$ a zero type $\zero_\Gamma$, and for each $C \in \TT(\Gamma.\zero_\Gamma)$ a section, both commuting strictly with reindexing.
\end{definition}

\begin{lemma} \label{lemma:zero-types}
If $\CC$ satisfies condition \LF\ and has \stable{3}{weakly stable} zero types, then $\CC_!$ has \stable{0}{strictly stable} zero types. \qed
\end{lemma}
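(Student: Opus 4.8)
The plan is to follow exactly the template established in the proofs of Lemmas~\ref{lemma:binary-sums}--\ref{lemma:identity-types}, treating the two components of a zero type---formation and elimination---in turn. The one simplification, compared with the cases above, is that the formation rule for $\zero$ has \emph{no} type premises, so the representing object for its premises is simply the terminal object, and the real content all lives in the elimination step.

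For formation, given $\Gamma \in \CC_!$ I would set $\baseof{\zero} := 1$, choose (once and for all, independently of $\Gamma$) some \stable{3}{weakly stable} zero type $\topof{\zero}$ over $1$---available since $\CC$ has \stable{3}{weakly stable} zero types---and take $\nameof{\zero} := {!_\Gamma} : \Gamma \myto 1$ to be the unique map to the terminal object. Strict stability is then immediate, and even more transparent than usual: $\baseof{\zero}$ and $\topof{\zero}$ do not mention $\Gamma$ at all, while for any $\sigma : \Gamma' \myto \Gamma$ we have ${!_\Gamma} \comp \sigma = {!_{\Gamma'}}$ by uniqueness of maps into $1$, so the name maps agree on the nose.

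For elimination, I would use the now-familiar two-stage pattern. Given $C \in \TT_!(\Gamma.\zero)$, i.e.\ a universe $\baseof{C}$, a type $\topof{C} \in \TT(\baseof{C})$, and a name map $\nameof{C} : \Gamma.\zero \myto \baseof{C}$---that is, in $\CC$, a map out of $\Gamma.[\zero]$, where $[\zero] = \topof{\zero}[{!_\Gamma}]$---I would first fix $\baseof{C}$ and build a representing object $\baseof{}$ for ``name maps with this universe''. Since $\chi(\topof{\zero}) : 1.\topof{\zero} \myto 1$ is a display map and the relevant second projection is a product projection, condition \LF\ supplies the required dependent exponential, and I set $\baseof{} := \exinterp{ c : \baseof{C}^{\topof{\zero}} }$; maps $\Gamma \myto \baseof{}$ then correspond naturally to name maps $\nameof{C}$. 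The identity $1_{\baseof{}}$ yields a universal name map $\pi_C : \baseof{}.\topof{\zero}[{!_{\baseof{}}}] \myto \baseof{C}$, hence a universal type $\topof{C}[\pi_C]$. Because $\topof{\zero}$ was chosen \stable{3}{weakly stable}, its reindexing $\topof{\zero}[{!_{\baseof{}}}]$ is a genuine zero type over $\baseof{}$, so applying zero-elimination in $\CC$ produces a universal section of $\topof{C}[\pi_C]$. Pulling this section back along the representing map $\Gamma \myto \baseof{}$ then gives the required section in $\CC_!$.

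Strict stability follows as in every previous case: the only operation that is not strictly natural in $\Gamma$---the choice of the eliminating section---is performed over the universal object $\baseof{}$, and so depends only on $\baseof{C}$ and the fixed $\topof{\zero}$, not on $\Gamma$, while everything else (the universal property of $\baseof{}$ and the pullback action on a section) is strictly natural. The main obstacle is the same ``tempting shortcut'' warned against in the proof of Lemma~\ref{lemma:binary-sums}: one must resist the urge to form the section directly over $\Gamma$ by applying zero-elimination to $[C]$ there, since that would reintroduce a dependence on $\Gamma$ and destroy strict stability. Routing the elimination through $\baseof{}$ is precisely what avoids this.
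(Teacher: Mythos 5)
Your proof is correct and is exactly the paper's intended argument: the paper omits the proof of this lemma, stating only that it follows the same template as the preceding cases, and your write-up instantiates that template faithfully (terminal local universe for formation, the representing object $1 \famop \baseof{C}$ for the elimination premises, universal section chosen over it, then pulled back). Nothing to add.
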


\paragraph{Unit types}

\begin{definition}
  Given  $\Gamma \in \CC$, a \myemph{unit type over $\Gamma$} consists of:
  \begin{itemize}
  \item a type $\unit \in \TT(\Gamma)$;
  \item a section $\pt : \Gamma \myto \Gamma.\unit$;
  \item for any type $C \in \TT(\Gamma.\unit)$ and section $d$ of $C[\pt]$, a section $\unitelim_{C,d}$ of $C$, such that $\unitelim_{C,d} \comp \pt = d$.
  \end{itemize}

  $\CC$ has \myemph{\stable{3}{weakly stable} unit types} if for each $\Gamma$, there is some $(\unit,\pt)$ over $\Gamma$ such that for every $\sigma : \Delta \myto \Gamma$, elimination sections can be chosen making $(\unit[\sigma],\pt[\sigma])$ a unit type over $\Delta$.

   A split comprehension category $\CC$ has \myemph{\stable{0}{strictly stable} unit types} if it is equipped with functions giving unit types $\unit_\Gamma$, $\pt_\Gamma$, and elimination sections $\unitelim_{C,d}$, all commuting strictly with reindexing.
\end{definition}

\begin{lemma} \label{lemma:unit-types}
If $\CC$ satisfies condition \LF\ and has \stable{3}{weakly stable} unit types, then $\CC_!$ has \stable{0}{strictly stable} unit types. \qed
\end{lemma}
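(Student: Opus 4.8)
The plan is to follow the template of Section~\ref{sec:template} and the fully worked cases above (Lemmas~\ref{lemma:binary-sums}, \ref{lemma:pi-types}, and \ref{lemma:identity-types}), treating the four rules---formation, introduction, elimination, computation---in turn. The one feature that makes unit types even easier than those cases is that the formation rule has \emph{no} type premises and no other hypotheses; so the representing object for its premises is simply the terminal object $1$ (the empty product, available by \LF), and every $\Gamma$ has a unique map into it.

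For formation and introduction I would first fix a single \stable{3}{weakly stable} unit type $(\unit_1,\pt_1)$ over $1 \in \CC$, and then set
\[ \baseof{\unit} := 1, \qquad \topof{\unit} := \unit_1, \qquad \nameof{\unit} := {!}_\Gamma : \Gamma \myto 1, \]
with the point $\pt_\Gamma := \pt_1[{!}_\Gamma]$ obtained by reindexing the universal point along ${!}_\Gamma$. Strict stability of both is immediate: $\baseof{\unit}$ and $\topof{\unit}$ do not depend on $\Gamma$ at all, while, for any $\sigma : \Gamma' \myto \Gamma$, we have ${!}_{\Gamma'} = {!}_\Gamma \comp \sigma$ by the universal property of $1$ (strictly natural in $\Gamma$), and reindexing of the \emph{map} $\pt_1$ is likewise strictly natural. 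Weak stability of $(\unit_1,\pt_1)$ ensures that the reindexing $[\unit] = \unit_1[{!}_\Gamma]$ is genuinely a unit type over $\Gamma$, so that the premises for elimination make sense.

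The substance, as always, is in elimination and computation, and here I would again resist forming the eliminator directly over $\Gamma$. Given $\Gamma$, a type $C \in \TT_!(\Gamma.\unit)$, and a section $d$ of $C[\pt]$, I fix the universe $\baseof{C}$ and build, in the internal language for $\CC$ (as in Section~\ref{sec:manipulating-universes}), the representing object
\[ \baseof{} := \exinterp{ c \of \baseof{C}^{\topof{\unit}},\ d \of \topof{C}(c(\pt_1)) }, \]
so that maps $\Gamma \myto \baseof{}$ correspond naturally to pairs $(\nameof{C},d)$, the identity $1_{\baseof{}}$ giving a universal such pair over $\baseof{}$ itself. Since $(\unit_1,\pt_1)$ was \stable{3}{weakly stable}, its reindexing to $\baseof{}$ is again a unit type, so I may choose a universal elimination section $\unitelim$ for this universal data over $\baseof{}$. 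The eliminator in $\CC_!$ is then its pullback $\unitelim_{C,d} := \unitelim[\nameof{C,d}]$ along the representing map $\nameof{C,d} : \Gamma \myto \baseof{}$, and the computation equation $\unitelim_{C,d} \comp \pt = d$ follows by pulling back the corresponding equation for the universal $\unitelim$ over $\baseof{}$.

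The only place any care is required is exactly the one flagged for copairing in Lemma~\ref{lemma:binary-sums}: the choice of elimination section must be made over $\baseof{}$, not over $\Gamma$, since it is precisely this choice that is typically not strictly natural in $\Gamma$ in the models of interest. Having made it universally, strict stability of $\unitelim_{C,d}$ follows as before, because the dependence on $\Gamma$ enters only through the (strictly natural) universal property of $\baseof{}$ and through the pullback of a \emph{section}, which is also strictly natural. I therefore expect no genuinely new obstacle to arise beyond those already resolved in the three detailed cases.
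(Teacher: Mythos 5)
Your proposal is correct and is exactly the argument the paper intends: the lemma's proof is omitted there precisely because it ``follows the same template as the cases above,'' and you have instantiated that template faithfully (terminal object as the local universe for formation, reindexed universal point for introduction, and a representing object with a universal elimination section chosen over it, pulled back strictly naturally, for elimination and computation). No gaps.
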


\paragraph{$\W$-types} \label{sec:w-types}

$\W$-types (also known as inductive types, or types of well-founded trees), are the most powerful of the standard type-constructors \cite[p.~79]{martin-lof:bibliopolis}.

Since their rules involve $\Pi$-types, any kind of $\W$-type structure on a comprehension category $\CC$ must be relative to some form of $\Pi$-type structure on $\CC$.
This dependence introduces an extra subtlety into the definition of weakly stable $\W$-types.
We will therefore consider two different weak forms: a simpler form, assuming that the $\Pi$-types of $\CC$ are pseudo-stable (for instance, if they are categorical exponentials); and a more involved but more general form, allowing that the $\Pi$-types themselves may be only weakly stable .

\begin{definition}
Suppose $\CC$ is equipped with some choice of dependent products.
Given $\Gamma \in \CC$, $A \in \TT(\Gamma)$, $B \in \TT(\Gamma.A)$, a \myemph{$\W$-type} $(\W,\fold,\Welim)$ for $\Gamma, A, B$ consists of:
\begin{itemize}
  \item a type $\W \in \TT(\Gamma)$;
  \item a map $\fold : \Gamma.A.\Pi[B,\W[\chi(A)]] \myto \Gamma.\W$, over $\Gamma$;
  \item together with, for any type $C$ over $\Gamma.\W$ and square of the form
  \begin{align*}
    \begin{tikzpicture}[auto]
      \node (UL) at (0,1.75) {$\Gamma.\W.\Pi[B,\W].\Pi[B, C[\app'_{B,\W}]]$};
      \node (BL) at (0,0) {$\Gamma.\W. \Pi[B,\W]$};
      \node (BR) at (4,0) {$\Gamma.\W,$};
      \node (UR) at (4,1.75) {$\Gamma.\W.C$};
      \draw[->] (UL) to node[mylabel] {$d$} (UR);
      \draw[->] (UL) to (BL);
      \draw[->] (BL) to node[mylabel,swap] {$\fold$} (BR);
      \draw[->] (UR) to (BR);
      \node (label) at (2,0.875) {(1)};
    \end{tikzpicture}
  \end{align*}
  a section $\Welim_{C,d} : \Gamma.\W \myto \Gamma.\W.C$, such that the square
  \begin{align*}
    \begin{tikzpicture}[auto]
      \node (UL) at (0,1.75) {$\Gamma.\W.\Pi[B,\W].\Pi[B, C[\app'_{B,\W}]]$};
      \node (BL) at (0,0) {$\Gamma.\W. \Pi[B,\W]$};
      \node (BR) at (4,0) {$\Gamma.\W,$};
      \node (UR) at (4,1.75) {$\Gamma.\W.C$};
      \draw[->] (UL) to node[mylabel] {$d$} (UR);
      \draw[->] (BL) to node[mylabel] {$\lambda(\Welim_{C,d} \comp \app'_{B,\W})$} (UL);
      \draw[->] (BL) to node[mylabel,swap] {$\fold$} (BR);
      \draw[->] (BR) to node[mylabel,swap] {$\Welim_{C,d}$} (UR);
      \node (label) at (2,0.875) {(2)};
    \end{tikzpicture}
  \end{align*}
  commutes.

  (Here $\app'$ is $\app$ with its arguments flipped; and we suppress several weakenings, so e.g.\ $\Pi[B,\W]$ is strictly speaking $\Pi[B,\W[\chi(A)]]$.)
  \end{itemize}
\end{definition}

For the strictly stable case, of course, the $\Pi$-types too must be strictly stable, in order for the stability equations for $\fold$ and $\Welim$ to typecheck.

\begin{definition}
Suppose $\CC$ has \stable{0}{strictly stable} dependent products.
We say $\CC$ has \myemph{\stable{0}{strictly stable} $\W$-types} (over the given dependent products) if it is equipped with operations providing, for all $\Gamma$, $A$, $B$ as above, a $\W$-type $(\W_{A,B}, \fold_{A,B}, \Welim_{A,B})$, such that for $\sigma : \Gamma' \to \Gamma$ and all appropriate $A$, $B$, $C$, $d$,
  \begin{align*}
    \W_{A,B}[\sigma] & = \W_{A [\sigma],B [\sigma]}\\
    \fold_{A,B}[\sigma] & = \fold_{A [\sigma],B [\sigma]} \\
    \Welim_{A,B;C,d}[\sigma] & = \Welim_{A [\sigma],B [\sigma];C [\sigma],D [\sigma]}.
  \end{align*}
\end{definition}

If we assume pseudo-stable dependent products, then we can give a simple definition of weakly stable $\W$-types, along the same lines as the other definitions of weakly stable constructors so far.

\begin{definition}  \label{def:weakly-stable-w-types-1}
Suppose $\CC$ is equipped with \stable{1}{pseudo-stable} dependent products.
A \myemph{weakly stable} $\W$-type for $\Gamma$, $A$, $B$ (over these dependent products) is $(\W,\fold)$ as above, such that for each $\sigma : \Delta \myto \Gamma$, there is some $\Welim$ making $(\W[\sigma],\fold[\sigma],\Welim)$ a $\W$-type for $\Delta$, $A[\sigma]$, $B[\sigma]$.

(Here the reindexing $\fold[\sigma]$ is taken with domain $\Delta.\W[\sigma].\Pi[B[\sigma],\W[\sigma]]$, which by pseudo-stability is a valid reindexing of $\Gamma.\W.\Pi[B,\W]$.)

We say $\CC$ has \myemph{weakly stable} $\W$-types (over the given dependent products) if, for each $\Gamma$, $A$, $B$, there exists some weakly stable $\W$-type.
\end{definition}

\begin{lemma}
If $\CC$ has pseudo-stable dependent products, and weakly stable $\W$-types over these, then $\CC_!$ has strictly stable $\W$-types over the strictly stable dependent products given by Scholium~\ref{sch:good-pi-types} together with Prop.~\ref{prop:stable-class-from-pseudo-stable}. 
\end{lemma}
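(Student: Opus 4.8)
The plan is to follow exactly the template of the previous lemmata (Lemmas~\ref{lemma:binary-sums}, \ref{lemma:pi-types}, and \ref{lemma:identity-types}), treating the four $\W$-rules in turn, with one genuinely new ingredient: the $\Pi$-types appearing in the $\W$-structure must be handled consistently between $\CC$ and $\CC_!$. First I would pin down the strictly stable dependent products on $\CC_!$. Since $\CC$ has pseudo-stable dependent products, these are in particular weakly stable, so by Prop.~\ref{prop:stable-class-from-pseudo-stable} they generate a stable class, and Scholium~\ref{sch:good-pi-types} then equips $\CC_!$ with strictly stable products drawn from that class; crucially, because the class arises from pseudo-stable products (Prop.~\ref{prop:good-class-from-pseudo-stable}), each of its members carries a coherent comparison isomorphism to the corresponding pseudo-stable product of $\CC$.

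For formation and fold, observe that $\W$-formation takes exactly the inputs $\Gamma$, $A=(\baseof{A},\topof{A},\nameof{A})$, $B=(\baseof{B},\topof{B},\nameof{B})$ of $\Pi$- and $\Sigma$-formation. I would therefore reuse the universe $\baseof{\W} := \baseof{A} \famop \baseof{B}$, with its universal projections $\pi_A$, $\pi_B$, and over this universal case choose a weakly stable $\W$-type $\topof{\W}$ for $\topof{A}[\pi_A]$, $\topof{B}[\pi_B]$ over $\baseof{\W}$ (relative to the pseudo-stable products of $\CC$), taking $\nameof{\W}$ to be the map classifying the pair $(\nameof{A},\nameof{B})$. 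The fold map $\fold_{A,B}$ is then the reindexing along $\nameof{\W}$ of the universal $\fold$ carried by $\topof{\W}$. Strict stability of $\W$ and $\fold$ under $\sigma : \Gamma' \myto \Gamma$ follows, just as in the $\Pi$-case, from the strict naturality in $\Gamma$ of the universal property of $\baseof{A} \famop \baseof{B}$, since the universes are independent of $\Gamma$ and only $\nameof{\W}$ involves it.

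The main obstacle is that the domain of $\fold$, and the various $\Pi$-types in the elimination square~(1) and computation square~(2), are built in $\CC_!$ from its strictly stable products, whose local universe (e.g.\ $\baseof{B} \famop \baseof{\W}$ for $\Pi[B,\W]$) differs from the universe $\baseof{\W}$ over which the universal $\W$-type and its pseudo-stable $\Pi$-types were formed. To make the reindexed universal $\fold$ (and later $\Welim$) land in the correct $\CC_!$-domains, I would transport along the coherent comparison isomorphisms furnished by the stable class: both the $\CC_!$-product and the relevant pseudo-stable $\CC$-product, once reindexed to the working context, are dependent products for the same underlying data, so the comparison isomorphism identifies them, and its coherence (functoriality and compatibility with $\app$ and $\lambda$ recorded in the definition of pseudo-stability) ensures the $\W$-equations still hold on the nose. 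This step is exactly why pseudo-stable, rather than merely weakly stable, $\Pi$-types are assumed here.

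With the $\Pi$-types thus reconciled, elimination and computation go through as in the $\id$-type case. For $\Welim$ I would fix the universes $\baseof{A}$, $\baseof{B}$, $\baseof{C}$, build a representing object $\baseof{}$ for the elimination data $(\nameof{A},\nameof{B},\nameof{C},d)$ in the internal language, use weak stability of $\topof{\W}$ to choose a universal elimination section over $\baseof{}$ satisfying the universal analogue of square~(2), and then define $\Welim_{C,d}$ as its reindexing along the classifying map $\Gamma \myto \baseof{}$. Strict stability again follows because the only non-natural choice---the universal $\Welim$---is made over $\baseof{}$, independently of $\Gamma$, while the classifying map and the reindexing of sections are strictly natural; and the computation equation~(2) in $\CC_!$ is simply the pullback of its universal counterpart.
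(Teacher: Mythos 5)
Your proposal is correct, and it is essentially the argument the paper intends: the paper omits the proof of this lemma, saying only that it follows the template of Lemmas~\ref{lemma:binary-sums}, \ref{lemma:pi-types} and \ref{lemma:identity-types}, and your construction (universe $\baseof{A} \famop \baseof{B}$ for formation, universal $\fold$ reindexed along the classifying map, representing object for the elimination data, universal $\Welim$ chosen over it and pulled back) is exactly that template instantiated for $\W$-types.

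The one point where you diverge from the paper's route is the treatment of the $\Pi$-type mismatch, and the comparison is worth recording. The paper's intended path for this particular lemma is to convert the pseudo-stable products into the stable class of Prop.~\ref{prop:good-class-from-pseudo-stable}, convert the Def.~\ref{def:weakly-stable-w-types-1} $\W$-types into Def.~\ref{def:weakly-stable-w-types-2} $\W$-types over that class (the equivalence proposition immediately following), and then invoke the general Lemma~\ref{lemma:w-types}. In Def.~\ref{def:weakly-stable-w-types-2} the existence of $\fold$ and $\Welim$ is quantified over \emph{all} good $\Pi$-types of the relevant data, so when lifting to $\CC_!$ one simply chooses the universal $\fold$ and $\Welim$ directly against the specific good products that $\CC_!$'s strictly stable $\Pi$-structure supplies over the local universes; no transport is ever needed. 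You instead keep the universal $\W$-data formulated against the pseudo-stable products of $\CC$ and transport along the coherent comparison isomorphisms. This works---the isomorphisms live over the local universes and are reindexed strictly naturally, and the coherence conditions in the definition of pseudo-stability are exactly what is needed for squares (1) and (2) to survive the transport---but it is the more delicate of the two arguments, and it is worth being explicit that the stable class you use must be the one of Prop.~\ref{prop:good-class-from-pseudo-stable} (whose members come \emph{equipped} with isomorphisms to the pseudo-stable products), not the class of all weakly stable products from Prop.~\ref{prop:stable-class-from-pseudo-stable}, whose members carry no such chosen isomorphisms. With that reading, your proof is sound.
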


In maximal generality, one might not want to assume that the $\Pi$-types are any more than weakly stable themselves.
Defining $\W$-types over these, that can lift to $\CC_!$, is a little more involved.

(In fact, dependent products are pseudo-stable in all examples we know of, so for $\W$-types this more general definition is never really needed.
However, it is illustrative of a more general situation that does arise in practice: constructors that depend on others previously defined, where the earlier ones (identity types, perhaps) are only weakly stable.
See Heuristic~\ref{heuristic:further-rules} below for more on this point.)

\begin{definition} \label{def:weakly-stable-w-types-2}
  Suppose $\CC$ is equipped with a stable class $\GG$ of $\Pi$-types.
  $\CC$ has \myemph{\stable{3}{weakly stable}} $\W$-types over $\GG$, if:
  \begin{itemize}
  \item for each $\Gamma$, $A$, $B$ as above, there is some $\W \in \TT(\Gamma)$ such that
  \item for each $\sigma : \Gamma' \myto \Gamma$, any reindexings $A'$, $B'$, $\W'$ of $A$, $B$, $\W$ along $\sigma$, and any good $\Pi$-type $(\Pi[B',\W'],\app_{B',\W'})$, there is some map $\fold : \Gamma'.A'\Pi[B',\W'] \myto \Gamma'.\W'$, over $\Gamma$, such that
  \item for each further $\sigma' : \Gamma'' \myto \Gamma'$, reindexings $A''$, $B''$, $\W''$, type $C \in \TT(\Gamma''.\W'')$, good $\Pi$-type $\acmarxiv{(\Pi[B'',C[\app'_{B',\W'}[\sigma]]],\app_{B'',C[\app'_{B',\W'}[\sigma]]})}{(\Pi[B'',\,C[\app'_{B',\W'}[\sigma]]],\ \app_{B'',\,C[\app'_{B',\W'}[\sigma]]})}$, and \acmarxiv{each }{}map $d : \Gamma''.A''.\Pi[B',\W'][\sigma].\Pi[B'',C[\app'_{B',\W'}[\sigma]]] \to \Gamma''.\W''.C$ over $\fold[\sigma']$ as in the square (1) above, a section $\Welim$ of $C$, such that
  \item for each further reindexing of everything along some map $\sigma'' : \Gamma''' \myto \Gamma''$, and good $\lambda$-abstraction $\lambda(\Welim[\sigma''] \comp \app'_{B',\W'}[\sigma'][\sigma''])$, the square corresponding to (2) above commutes.   
  \end{itemize}
\end{definition}

As one would hope, these definitions of weakly stable $\W$-types agree:

\begin{proposition}
Suppose $\CC$ is equipped with pseudo-stable $\Pi$-types.
Then $\CC$ has weakly stable $\W$-types in the sense of Def.~\ref{def:weakly-stable-w-types-1} over this pseudo-stable structure if and only if it has weakly stable $\W$-types in the sense of Def.~\ref{def:weakly-stable-w-types-2} over the corresponding stable class defined in Prop.~\ref{prop:stable-class-from-pseudo-stable}. \qed
\end{proposition}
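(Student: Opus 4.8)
The plan is to exploit the one feature that singles out the corresponding stable class of Prop.~\ref{prop:good-class-from-pseudo-stable}: every good $\Pi$-type (resp.\ every good $\lambda$-abstraction) comes equipped with a cartesian comparison isomorphism to the canonical $\Pi$-type (resp.\ corresponds, under that isomorphism, to the canonical $\lambda$-abstraction) supplied by the pseudo-stable structure. Consequently, the only real difference between the two definitions is bookkeeping: Def.~\ref{def:weakly-stable-w-types-1} always reindexes $\fold$ to the single canonical pseudo-stable domain $\Pi[B[\sigma],\W[\sigma]]$, whereas Def.~\ref{def:weakly-stable-w-types-2} permits the domain of $\fold$ to be any good $\Pi$-type. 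Since these are interchangeable by transport along the comparison isomorphisms, the two notions coincide. I would record at the outset that, by the cartesian functoriality equations for pseudo-stable $\Pi$-types, these comparison isomorphisms are compatible with reindexing and intertwine the application maps $\app$ and the $\lambda$-abstractions; this is exactly what licenses the transports below.

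For the ``if'' direction (Def.~\ref{def:weakly-stable-w-types-2} implies Def.~\ref{def:weakly-stable-w-types-1}), I would keep the same $\W$, and obtain a single $\fold$ by instantiating the second clause of Def.~\ref{def:weakly-stable-w-types-2} at $\sigma = 1_\Gamma$ with the (good) pseudo-stable $\Pi[B,\W]$. For a given $\sigma : \Delta \myto \Gamma$, the pseudo-stable reindexing $\fold[\sigma]$ is then precisely the reindexing $\fold[\sigma']$ appearing in the third clause (taking $\sigma' = \sigma$). Instantiating the third and fourth clauses with the pseudo-stable $\Pi$-types and, at $\sigma'' = 1_\Delta$, with the pseudo-stable $\lambda$-abstraction yields, for each $C$ and $d$, an elimination section $\Welim$ together with the commuting square $(2)$; these make $(\W[\sigma],\fold[\sigma],\Welim)$ a $\W$-type, as Def.~\ref{def:weakly-stable-w-types-1} demands.

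For the ``only if'' direction, I would start from $(\W,\fold)$ as in Def.~\ref{def:weakly-stable-w-types-1} and keep the same $\W$. Given the data of the second clause of Def.~\ref{def:weakly-stable-w-types-2}---a map $\sigma$, reindexings, and a good $\Pi$-type---I would transport the canonical $\fold[\sigma]$ along the comparison isomorphism of that good $\Pi$-type to produce the required $\fold$. For the third clause, at a further $\sigma'$ I would invoke the elimination operation supplied by Def.~\ref{def:weakly-stable-w-types-1} for the composite $\sigma \comp \sigma'$: transport the input $d$ back along the comparison isomorphisms to the pseudo-stable domain, apply that operation to obtain $\Welim$, and transport as needed. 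The fourth clause then follows because the computation square $(2)$, which holds for the pseudo-stable $\lambda$-abstraction, transports---along the comparison isomorphisms for both the domain $\Pi$-type and the $\lambda$-abstraction---to the corresponding square for an arbitrary good $\lambda$-abstraction.

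The main obstacle is purely coherent bookkeeping: one must check that the comparison isomorphisms for the various $\Pi$-types interact correctly with the nested reindexings $\sigma,\sigma',\sigma''$ and with the flipped application maps $\app'$ occurring in squares $(1)$ and $(2)$, so that these squares genuinely transport. This is guaranteed by the equations in the definition of pseudo-stable dependent products, which force $\app$ and $\lambda$ to commute with the action on cartesian maps, but it is notationally heavy---which is why the statement can fairly be marked as essentially routine.
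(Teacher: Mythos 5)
The paper gives no proof of this proposition---it is stated with a terminal $\square$ and left as routine---so there is nothing to compare against; your transport argument along the comparison isomorphisms is correct and is evidently the intended one. Two small remarks: first, you have (rightly) read ``the corresponding stable class'' as the one of Prop.~\ref{prop:good-class-from-pseudo-stable}, whose members carry isomorphisms to the pseudo-stable $\Pi$-types---the statement's citation of Prop.~\ref{prop:stable-class-from-pseudo-stable} looks like a slip, and without those isomorphisms the ``only if'' direction would fail, since weakly stable $\Pi$-types need not be isomorphic to one another. Second, the one step deserving explicit mention is that clause four of Def.~\ref{def:weakly-stable-w-types-2} demands square (2) after a \emph{further} reindexing $\sigma''$, while Def.~\ref{def:weakly-stable-w-types-1} asserts it only over the base where $\Welim$ was produced; this is covered because commutativity of diagrams is preserved by reindexing and the pseudo-stability equation $\sigma.\Pi[\sigma_A,\sigma_B] \comp \lambda(t[\sigma]) = \lambda(t)\comp\sigma$ identifies the reindexed $\lambda$-abstraction with the one the reindexed square requires---which your closing paragraph in effect acknowledges.
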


We may now lift weakly stable structure to $\CC_!$, using the techniques established above.
Once again, nothing surprising occurs, and no new subtleties arise.

\begin{lemma} \label{lemma:w-types}
Suppose $\CC$ satisfies \LF, and is equipped with a stable class of $\Pi$-types, and \stable{3}{weakly stable} $\W$-types relative to these.
Then $\CC_!$ carries \stable{0}{strictly stable} $\W$-types, over the strictly stable $\Pi$-types from Scholium~\ref{sch:good-pi-types}. \qed
\end{lemma}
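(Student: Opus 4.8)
The plan is to run exactly the template of Lemmas~\ref{lemma:binary-sums}, \ref{lemma:pi-types}, and \ref{lemma:identity-types}, handling the four $\W$-rules---formation, $\fold$, elimination, and computation---in turn. Throughout, write $\GG$ for the given stable class of $\Pi$-types, and recall that the \stable{0}{strictly stable} $\Pi$-types on $\CC_!$ furnished by Scholium~\ref{sch:good-pi-types} are always chosen from $\GG$; the single feature distinguishing this proof from the earlier ones is the need to keep these ambient $\Pi$-types coordinated, since the $\W$-rules mention $\Pi$-types in their very statements.

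For formation, given $A=(\baseof{A},\topof{A},\nameof{A})$ in $\TT_!(\Gamma)$ and $B=(\baseof{B},\topof{B},\nameof{B})$ in $\TT_!(\Gamma.A)$, I would set $\baseof{\W} := \baseof{A} \famop \baseof{B}$, with universal projections $\pi_A$, $\pi_B$ as in Section~\ref{sec:manipulating-universes}, choose (using \stable{3}{weak stability} in $\CC$) a \stable{3}{weakly stable} $\W$-type $\topof{\W}$ for $\topof{A}[\pi_A]$, $\topof{B}[\pi_B]$ over $\baseof{\W}$, and take $\nameof{\W} : \Gamma \myto \baseof{\W}$ to be the map corresponding to $(\nameof{A},\nameof{B})$. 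As for $\Pi$- and $\Sigma$-types, \stable{0}{strict stability} is immediate, since the universes depend only on $\baseof{A}$, $\baseof{B}$ and $\nameof{\W}$ is strictly natural in $\Gamma$. For $\fold$, the point is that the $\Pi$-type $\Pi[B,\W[\chi(A)]]$ appearing in the rule is, by Scholium~\ref{sch:good-pi-types}, a good $\Pi$-type over the universal case; so Definition~\ref{def:weakly-stable-w-types-2} supplies a universal $\fold$ over $\baseof{\W}$ relative to precisely this $\Pi$-type, and I would define $\fold$ in $\CC_!$ by reindexing this universal map along $\nameof{\W}$, just as $\app_{A,B}$ was obtained in Lemma~\ref{lemma:pi-types}.

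For elimination I would follow the two-stage pattern used for copairing and for $\jj$: fixing the universes $\baseof{A}$, $\baseof{B}$, $\baseof{C}$, build a representing object $\baseof{}$ for the elimination data $(A,B,C,d)$---writing it out in the internal language as in the $\id$-type case---so that such data over $\Gamma$ correspond naturally to maps $\Gamma \myto \baseof{}$. Over $\baseof{}$ the reindexed $\Pi$-types remain good by stability of $\GG$, so Definition~\ref{def:weakly-stable-w-types-2} yields a universal elimination section $\Welim$, which I would reindex along the representing map $\Gamma \myto \baseof{}$ to obtain $\Welim_{C,d}$ in $\CC_!$. \stable{0}{Strict stability} then follows as in every earlier case: the dependence on $\Gamma$ (and on $\nameof{A}$, $\nameof{B}$, $\nameof{C}$, $d$) is only through the universal property of $\baseof{}$ and the action of a pullback functor on maps, both strictly natural, while the one non-natural choice---the universal $\Welim$---was made over $\baseof{}$ and is untouched by reindexing in $\CC_!$. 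Finally, the computation equation (square~(2)) in $\CC_!$ is just the pullback along $\Gamma \myto \baseof{}$ of the corresponding equation for the universal $\Welim$, which holds by construction.

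The main obstacle---indeed the only real novelty over the previous lemmas---is the bookkeeping of $\Pi$-types. Because $\W$-formation, $\fold$, and elimination all refer to $\Pi$-types, one must verify that the $\Pi$-types occurring in the $\CC_!$-level rules are exactly the \stable{0}{strictly stable} good ones from Scholium~\ref{sch:good-pi-types}, and that these agree with the good $\Pi$-types fed into the successive weak-stability clauses of Definition~\ref{def:weakly-stable-w-types-2} at each reindexing stage. Closure of $\GG$ under reindexing is precisely what secures this agreement---this is exactly why Definition~\ref{def:weakly-stable-w-types-2} quantifies explicitly over a good $\Pi$-type at each successive reindexing---so once these choices are tracked carefully, no difficulty beyond the earlier cases arises.
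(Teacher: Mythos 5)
Your proposal is correct and follows exactly the template that the paper itself invokes for this lemma (the paper omits the proof of Lemma~\ref{lemma:w-types}, stating in Section~\ref{sec:other-constructors} that it ``follows exactly the same template as the cases above''). Your fleshed-out version---formation over $\baseof{A}\famop\baseof{B}$, a universal $\fold$ and $\Welim$ chosen over representing objects and then reindexed, with the closure of the stable class $\GG$ under reindexing securing that the $\Pi$-types appearing in the $\CC_!$-level rules are the good ones fed into the clauses of Definition~\ref{def:weakly-stable-w-types-2}---is precisely the intended argument, and correctly identifies the $\Pi$-type bookkeeping as the only new point.
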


\noindent This completes the proof of Theorem~\ref{thm:main-thm}.  \qed

\counterwithin{theorem}{subsection}

\section{Further notes} \label{sec:notes}

\subsection{Generalization to other rules} \label{sec:further-rules}

We have given Theorem~\ref{thm:main-thm} just for (a selection of) the standard constructors and rules of Martin-L\"o{}f type theory.
However, one of the hallmarks of type theory is its extensibility.
One usually wants to consider at least some other rules and constructors beyond these; a technique applying only to this standard core would be highly limited in its utility.
A word is therefore in order on how the present results extend to rules and constructors beyond those considered above.

\begin{heuristic} \label{heuristic:further-rules}
The definition of \myemph{\stable{0}{strictly stable}} structure and the initiality of the syntactic category extend straightforwardly to all reasonable constructors and rules of type theory.

The definition of \myemph{\stable{3}{weakly stable}} structure seems to extend to all reasonable constructors and rules, in general along the (slightly involved) lines of the case of $\W$-types in Section~\ref{sec:w-types} above.

Given these, the lifting of \stable{3}{weakly stable} structure on $\CC$ to \stable{0}{strictly stable} structure on $\CC_!$ extends straightforwardly to all reasonable finitary rules and constructors, \myemph{except for type equality rules}, exactly along the lines of the cases given in Section~\ref{sec:main-proof}.
The finitariness condition may be removed by strengthening \LF\ appropriately. 
\end{heuristic}

For the case of \stable{0}{strictly stable} structure, this is well-understood and generally believed in the community; but, to our knowledge, no precise statement of it has been formulated.
Given such a formulation, one would hope that the present heuristics could be made precise, and the results of this paper given in some more satisfying generality.
However, setting up such a general framework is beyond the scope of this paper; so for now we treat the general case informally. \\

The case of \stable{3}{weak stability} is less clear; extending the definition is in general rather trickier than most of the cases considered in this paper might suggest.

The complication comes from the dependence of later rules on earlier ones.
Since most standard constructors fall into independent groups, we have ``cheated'' slightly within these groups, and given \stable{3}{weak stability} in a slightly simpler form than the general approach would provide.
In the case of $\W$-types, however, we see the complications that arise with successive dependency of rules.

The inputs to each operation (or existence condition) are interpretations of the premises of the corresponding rule, which may involve previously given constructors.
For instance, the inputs to $\W$-elimination involve $\Pi$-types.
If those previous constructors are only \stable{3}{weakly stable}, then the types in the premises will not have canonically chosen interpretations; so one must quantify over all possible choices, or at least over some reasonable class of choices.

So in general, for each operation, one considers $\CC$ as equipped with a \myemph{stable class} of interpretations, called \myemph{good}; and in the inputs to each operation, one quantifies over all good interpretations of the types involved in the premises.
Since these will in general be derived types, not just primitives, this depends on being able to extend the notion of good interpretations to derived types.

This consideration is subtle enough that, without a precise formulation, we cannot confidently claim that this approach extends to \myemph{all} reasonable rules.
However, for all the rules that we have investigated in this connection, it extends without further complications.

(If a constructor does not appear in subsequent rules, then there is no need to distinguish a class of good interpretations; one may without loss of generality consider all weakly stable interpretations as good, and hence to ask that for any given input, some such interpretation exists.) \\

Once \stable{3}{weak stability} is appropriately defined, the lifting from $\CC$ to $\CC_!$ is generally straightforward, modulo two limitations.

Firstly, with \LF\ in its present form, one can lift only finitary rules, since we have just finite limits with which to construct representing objects for premises.
To lift infinitary rules, one needs to strengthen \LF\ by assuming appropriate larger limits in $\CC$.

Secondly, and less negotiably, the lifting works just for rules whose conslusions are type constructors, term constructors, or term equalities.
It does \myemph{not} work for type equality rules.
Even when some equality of derived types holds strictly in $\CC$, their liftings to $\CC_!$ will almost always have different local universes.

The most notable type equality rules considered in practice are the constructor commutation rules for universes à la Tarski \cite[p.~88]{martin-lof:bibliopolis}, and (in the absence of universes) large eliminators for simple inductive types.
In each case, one may replace these rules with forms not involving type equalities, which will then lift.

Most straightforwardly, one can simply replace the equalities by (terms representing) equivalences of types.

Alternatively, in the case of operations on universes, one may directly equip the results of operations with the appropriate structure.
For instance, if $(\univ,\elt[\univ])$ is a universe, and $+^{\univ} : \univ \times \univ \myto \univ$ is the sum-types operation on $\univ$, then the standard commutation rule states that $\elt[\univ](a +^{\univ} b) = \elt[\univ](a) + \elt[\univ](b)$.
Instead, one may posit inclusion maps and an eliminator directly exhibiting $\elt[\univ](a +^{\univ} b)$ as a sum for $\elt[\univ](a)$ and $\elt[\univ](b)$, independently of any globally defined sum types.
This is interestingly analogous to \stable{3}{weak stability}, replacing an explicit commutation condition by the preservation of the universal property.

In either case, it seems that in replacing type equality rules with these weaker forms, one loses only convenience, not logical strength.
Again, however, this is somewhat heuristic and conjectural: generally believed based on practical experience, but not (to our knowledge) known in any precise form.

\subsection{Applications}

Our main motivating examples are homotopy-theoretic in nature, along the lines of \cite{warren-awodey:homotopy-theoretic-models,warren:thesis,garner-van-den-berg:top-and-simp-models,voevodsky:notes-2011-04}, and similar.
The slogan for all such models is \myemph{types as fibrations}.

Specifically, any weak factorization system (or algebraic wfs) on a finitely complete category $\EE$ gives a comprehension category $\TT \myto \EE$ over $\EE$, with $\TT(\Gamma)$ the category of right maps into $\Gamma$.
(We will refer to the right maps of the factorization system as \myemph{fibrations}.)
When it is clear which weak factorization system is under consideration, we refer to the resulting comprehension category again simply as $\EE$.
Similarly, we write $\EE_f$ for the full sub comprehension category on the fibrant objects of $\EE$.

If (the underlying maps of) fibrations are exponentiable, then the ambient hypothesis \LF\ applies; so to model type theory, we need only show that appropriate \stable{3}{weakly stable} structure exists.
As shown in various recent work---\cite{warren-awodey:homotopy-theoretic-models,warren:thesis,arndt-kapulkin:homotopy-theoretic-models}---this structure often follows from well-known homotopy-theoretic facts.
Various combinations of homotopy-theoretic properties turn out to suffice.
One particularly fruitful combination is the following (the terminology is taken from \cite{arndt-kapulkin:homotopy-theoretic-models}; we modify their definition somewhat):

\begin{definition}
A \myemph{logical weak factorization system on $\EE$} is a weak factorization system on $\EE$, such that:
\begin{enumerate}[(a)]
\item \label{log-mod-cat:fibs-exp} fibrations are exponentiable: for any fibration $p : Y \myto X$, the pullback functor $p^* : \EE/X \myto \EE/Y$ has a right adjoint $\Pi_p$; 
\item \label{log-mod-cat:tcofs-pullback} left maps are preserved by pullback along fibrations (equivalently, fibrations are preserved by exponentiation along fibrations); and
\item \label{log-mod-cat:substitutions} any left map $i$ between fibrations over a common base $\Gamma$ 
  \begin{align*}
    \begin{tikzpicture}[auto]
      \node (one) at (0,1.25) {$A$};
      \node (two) at (2,1.25) {$B$};
      \node (base) at (1,0) {$\Gamma$};
      \draw[->] (one) to (base);
      \draw[->] (two) to (base);
      \draw[->] (one) to node[mylabel] {$i$} (two);
    \end{tikzpicture}
  \end{align*}
  is \myemph{substitution-stable in $\Gamma$}, i.e.\ its pullback along any map $f : \Gamma' \myto \Gamma$ is again a left map.
\end{enumerate}

A \myemph{semi-logical weak factorization system} is as above, but with the conditions required only for fibrations over fibrant bases.

A \myemph{(semi-)logical model structure} is a model structure whose (trivial cofibration, fibration) weak factorization system is (semi-)logical.
\end{definition}

\begin{theorem}\label{thm:soundness}
Suppose $\EE$ is a finitely complete category, with stable finite coproducts, equipped with a weak factorization system.

If the weak factorization system is semi-logical, then $(\EE_f)_{!}$ models type theory with $\Pi$-, $\Sigma$-, unit, $\id$-, and finite sum types.
If it is moreover logical, then so does $\EE_!$.
\end{theorem}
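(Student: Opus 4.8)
The plan is to deduce this directly from Theorem~\ref{thm:main-thm}, applied to the comprehension category that the weak factorization system induces. Thus the real work is to verify the two hypotheses of that theorem---condition~\LF, and \stable{3}{weakly stable} structure for each of the five listed constructors---and then to determine over which base (all of $\EE$, or only the fibrant objects $\EE_f$) these hypotheses hold. The semi-logical/logical dichotomy will enter only at this last point.

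First I would record that the weak factorization system does yield a full comprehension category on $\EE$: contexts are objects of $\EE$, types over $\Gamma$ are fibrations into $\Gamma$, reindexing is pullback (using that $\EE$ is finitely complete and that fibrations are pullback-stable), and comprehension sends a fibration to its underlying map. Fullness is immediate, since types are bare fibrations and their maps over $\CC$ are exactly maps of the underlying slices. The same recipe restricts to a full comprehension category on $\EE_f$, using that a fibration over a fibrant base has fibrant domain (compose with $\Gamma \myto 1$). Condition~\LF\ then follows from clause~(a): $\EE$ has finite products by finite completeness, and exponentiability of the display maps (the fibrations) is precisely condition (LFa), which implies~\LF.

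Next I would exhibit \stable{3}{weakly stable} structure for each constructor, following the ``types as fibrations'' template of the cited works. Three cases are essentially strict: $\Sigma$-types are composites of fibrations (closed under composition), with pairing the identity; unit types are identity fibrations, with trivial elimination; and $\Pi$-types are the categorical dependent products $\Pi_p$ supplied by clause~(a), which land among fibrations by clause~(b). Being categorical exponentials, the $\Pi$-types satisfy the $\eta$-rule and are pseudo-stable, hence furnish a stable class in the sense of Proposition~\ref{prop:stable-class-from-pseudo-stable}. The two genuinely homotopical cases are identity types and finite sums (binary sums and the zero type being the binary and nullary instances): for $\id$-types one factors the diagonal $A \myto A \times_\Gamma A$ as a left map $\rr_A$ followed by a fibration $\id[A]$; for sums one forms the finite coproduct of total spaces in $\EE$, factors its map to $\Gamma$ as a left map followed by a fibration, and takes the fibration as the sum with the composite coproduct inclusions as the $\nu_i$. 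In both cases the eliminator ($\jj$, resp.\ the copair) is produced by solving a lifting problem of the constructed left map against a fibration.

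The hard part will be \stable{3}{weak stability} of these last two constructors: one must check that after reindexing along an \emph{arbitrary} $\sigma : \Delta \myto \Gamma$ the relevant left maps remain left maps, so that the defining lifting problems stay solvable and the reindexed data is again an $\id$-type (resp.\ a binary sum). This is exactly what clauses~(b) and~(c), together with stability of the finite coproducts, are designed to guarantee, and it is the substance of the cited homotopy-theoretic results \cite{warren:thesis,arndt-kapulkin:homotopy-theoretic-models}; I would assemble these into weak stability for $\id$-types and for sums. Finally I would isolate the base-dependence: in the semi-logical case clauses~(b) and~(c) are assumed only over fibrant bases, so the weak-stability arguments go through only when every context is fibrant, forcing the restriction to $\EE_f$; in the logical case they hold over arbitrary bases, so all of $\EE$ may be used. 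Applying Theorem~\ref{thm:main-thm} to $(\EE_f)_!$ (resp.\ $\EE_!$) then converts this \stable{3}{weakly stable} structure into \stable{0}{strictly stable} structure, and hence into a model of the syntax with $\Pi$-, $\Sigma$-, unit, $\id$-, and finite sum types, as claimed.
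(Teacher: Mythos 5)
Your proposal is correct and follows essentially the same route as the paper's own (much terser) proof: verify condition \LF{} via finite completeness and exponentiability of fibrations, build \stable{3}{weakly stable} structure along standard ``types as fibrations'' lines (identity types by factoring the diagonal as in Awodey--Warren, the rest with clause~(c) supplying weak stability), and then invoke Theorem~\ref{thm:main-thm}. The only difference is that you spell out the constructor-by-constructor details and the fibrant-base bookkeeping that the paper delegates to the cited references.
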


\begin{proof}
The assumption of finite completeness, together with (\ref{log-mod-cat:fibs-exp}), ensures that condition \LF\ applies; so by the results of Section \ref{sec:main-thm} it suffices to construct the desired \stable{3}{weakly stable} structure on $\EE$.
Identity types are constructed as in \cite[Thm.~3.1]{warren-awodey:homotopy-theoretic-models} and \cite[Thm.~2.17]{warren:thesis}
The other structure is along standard categorical lines, with condition (\ref{log-mod-cat:substitutions}) providing \stable{3}{weak stability} (compare \cite[Thm.~26]{arndt-kapulkin:homotopy-theoretic-models}).
\end{proof}

For recognizing (semi-)logical weak factorization systems, it is often convenient to replace (\ref{log-mod-cat:substitutions}) with a simpler equivalent criterion:
\begin{lemma} \label{lemma:left-map-stability-criteria}
  In any weak factorization system satisfying (\ref{log-mod-cat:tcofs-pullback}) above, the following are equivalent:
  \begin{enumerate}[(i)]
  \item \label{item:left-maps-1} Any left map between fibrations over a common base $\Gamma$ is substitution-stable in $\Gamma$.
  \item \label{item:left-maps-2} Any map between fibrations over a \acmarxiv{}{common }base $\Gamma$ has an $(\LL,\RR)$ factorization whose $\LL$-factor is substitution-stable in $\Gamma$.
  \item \label{item:left-maps-3} For any fibration $p : X \fibto \Gamma$, the diagonal map $\Delta_p : X \myto X \times_\Gamma X$ has an $(\LL,\RR)$ factorization whose $\LL$-factor is substitution-stable in $\Gamma$.
  \end{enumerate}

  Moreover, they remain equivalent when restricted to fibrant bases.
\end{lemma}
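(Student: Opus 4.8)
The plan is to prove the cycle (\ref{item:left-maps-1}) $\Rightarrow$ (\ref{item:left-maps-2}) $\Rightarrow$ (\ref{item:left-maps-3}) $\Rightarrow$ (\ref{item:left-maps-1}), using throughout the closure properties of the two classes: left maps are closed under composition and retract and, by hypothesis (\ref{log-mod-cat:tcofs-pullback}), under pullback along fibrations, while fibrations are closed under composition and pullback. A preliminary observation I will record is that substitution-stable left maps over $\Gamma$ are themselves closed under retract and under pullback along fibrations over $\Gamma$: the first because $\LL$ is closed under retract and retracts are preserved by reindexing; the second because for $f : \Gamma' \myto \Gamma$ reindexing commutes with pullback, sends fibrations to fibrations, and (by stability) sends the pulled-back map to a left map, whence (\ref{log-mod-cat:tcofs-pullback}) applies once more.

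For (\ref{item:left-maps-1}) $\Rightarrow$ (\ref{item:left-maps-2}), given a map $g : A \myto B$ of fibrations over $\Gamma$, take any $(\LL,\RR)$ factorization $A \myto E \myto B$; since $E \myto B \myto \Gamma$ is a composite of fibrations, $E$ is a fibration over $\Gamma$, so the left factor is a left map between fibrations over $\Gamma$, hence substitution-stable by (\ref{item:left-maps-1}). The implication (\ref{item:left-maps-2}) $\Rightarrow$ (\ref{item:left-maps-3}) is immediate: for a fibration $p : X \fibto \Gamma$ the diagonal $\Delta_p : X \myto X \times_\Gamma X$ is a map between fibrations over $\Gamma$ (both $X$ and $X \times_\Gamma X$ are fibrations over $\Gamma$), so (\ref{item:left-maps-2}) supplies the required factorization.

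The substance is (\ref{item:left-maps-3}) $\Rightarrow$ (\ref{item:left-maps-1}). Fix a left map $i : A \myto B$ of fibrations over $\Gamma$ and a map $f : \Gamma' \myto \Gamma$; I must show $f^* i$ is a left map. Using (\ref{item:left-maps-3}), factor the diagonal of $B$ as $B \myto[j] PB \myto[q] B \times_\Gamma B$ with $j$ a substitution-stable left map and $q = (q_0,q_1)$ a fibration, so that $PB$ is a path object for $B$ whose reflexivity $j$ is a left map. Since $i$ is a left map and the structure map $A \fibto \Gamma$ and $q$ are fibrations, two lifting problems produce a retraction $r : B \myto A$ over $\Gamma$ with $r i = 1_A$, and a fibrewise homotopy $H : B \myto PB$ over $\Gamma$ with $q_0 H = i r$, $q_1 H = 1_B$ and $H i = j i$; that is, $i$ is a strong deformation retract witnessed by a good (left-reflexivity) path object. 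Now apply $f^*$: all of these equations are preserved, $f^* q$ is again a fibration, and---crucially---$f^* j$ is again a left map, precisely because $j$ was chosen substitution-stable. Thus $f^* i$ is a strong deformation retract of fibrations over $\Gamma'$ witnessed by the good path object $f^* PB$, and the standard argument (transport of a lifting datum along $f^* H$, using that $f^* j$ lifts against any fibration and that the endpoint of the transported homotopy repairs the discrepancy between $f^* i \comp f^* r$ and the identity) shows that $f^* i$ has the left lifting property against all fibrations, i.e.\ is a left map.

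The main obstacle is exactly this last step. The tempting shortcut---realizing $f^* i$ as a retract of the reflexivity inclusion into a mapping path space, which is visibly a pullback of $j$---fails, because that inclusion is a pullback of $j$ along $f^* i \times 1$, a map that is \emph{not} a fibration, so (\ref{log-mod-cat:tcofs-pullback}) cannot be invoked. The deformation-retract formulation circumvents this: it never pulls a left map back along a non-fibration, and it isolates the single point where stability is used, namely that $f^* j$ remains a left map. Finally, every object and every lift above lives among fibrations over the relevant base, and every reindexing is along a map into that base, so restricting all quantifiers to fibrant bases leaves each implication intact---in (\ref{item:left-maps-3}) $\Rightarrow$ (\ref{item:left-maps-1}) one invokes (\ref{item:left-maps-3}) only at the fibrant base $\Gamma$, and $f^* j$ only for $f$ with fibrant domain---which yields the final clause.
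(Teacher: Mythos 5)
Your proof is correct, and it closes the cycle of implications along a genuinely different path from the paper's. The paper establishes (iii) $\Rightarrow$ (ii) by the mapping path space factorization, following Gambino--Garner's Lemma 4.2.2 (one must show that the inclusion $A \myto A \times_B PB$ is a left map and substitution-stable, which is where the Frobenius hypothesis (b) does its work), and then deduces (ii) $\Rightarrow$ (i) by the retract argument: a left map $i$ between fibrations over $\Gamma$ lifts against the right factor of its stable factorization, exhibiting $i$ as a retract of the substitution-stable left factor, and retracts are preserved by reindexing along $\Gamma' \myto \Gamma$. You instead prove (iii) $\Rightarrow$ (i) in a single step, extracting strong-deformation-retract data $(r,H)$ for $i$ from two lifting problems, reindexing that data along $f$, and then running the transport argument on the reindexed data. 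Both routes turn on the same engine --- transporting a partial section along a fibrewise homotopy, the step that genuinely needs (b) --- but yours only ever applies it to maps already known to be left maps, so you never have to prove that the mapping-path-space inclusion of an \emph{arbitrary} map is a left map; the paper's route, in exchange, produces the stable factorization of (ii) as a by-product rather than recovering it afterwards from (i). One elision worth flagging in your final step: ``$f^*j$ lifts against any fibration'' is not by itself enough for the transport argument; one also needs that the pullback of $f^*j$ along the fibration being lifted against is again a left map, which is a second invocation of the standing hypothesis (b). Since (b) is assumed throughout the lemma, this is a gap of presentation only, not of substance; and your observation that the naive shortcut (realizing $f^*i$ as a retract of a mapping-path-space inclusion that is only a pullback of $j$ along a non-fibration) fails is exactly right.
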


\begin{proof}
  The implications (\ref{item:left-maps-1}) $\Imp$ (\ref{item:left-maps-2}) $\Imp$ (\ref{item:left-maps-3}) are immediate.
  For the converses, (\ref{item:left-maps-2}) $\Imp$ (\ref{item:left-maps-1}) is a standard retract argument, while (\ref{item:left-maps-3}) $\Imp$ (\ref{item:left-maps-2}) is analogous to the proof of \cite[Lemma~4.2.2]{gambino-garner}.
\end{proof}

A stronger condition, but usually easier to verify when it holds, is:
\begin{lemma} \label{lemma:cofibs-stable-implies-logical}
  Let $\EE$ be a model category in which cofibrations are stable under pullback.  Then condition (\ref{log-mod-cat:substitutions}) holds in $\EE$. 
\end{lemma}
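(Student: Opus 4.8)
The plan is to unwind what condition~(\ref{log-mod-cat:substitutions}) asks for in the model-categorical setting. For a model category, the weak factorization system under consideration is the (trivial cofibration, fibration) one, so the \myemph{left maps} are precisely the trivial cofibrations. Thus, given a left map $i : A \myto B$ between fibrations over a common base $\Gamma$, and an arbitrary $f : \Gamma' \myto \Gamma$, I must show that the pullback $f^*i : f^*A \myto f^*B$ is again a trivial cofibration, i.e.\ both a cofibration and a weak equivalence. I would treat these two halves separately.

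First I would dispose of the cofibration half. A direct diagram chase shows that $f^*i$ is itself a pullback of $i$: the naturality square with top and bottom edges the projections $f^*A \myto A$ and $f^*B \myto B$ is cartesian, since $A \times_B f^*B \iso \Gamma' \times_\Gamma A = f^*A$. As $i$ is a cofibration and cofibrations are, by hypothesis, stable under pullback, $f^*i$ is a cofibration. (This is exactly the point at which the standing hypothesis of the lemma is used; in a general model category pullback need not preserve cofibrations.)

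The weak-equivalence half is where the fibrancy of $A$ and $B$ does its work. Here I would pass to the slice model structures, in which weak equivalences, cofibrations, and fibrations are all created by the forgetful functors to $\EE$. In $\EE/\Gamma$ the fibrant objects are exactly the fibrations over $\Gamma$, so $A$ and $B$ are fibrant, and $i$ is a weak equivalence between fibrant objects. The pullback functor $f^* : \EE/\Gamma \myto \EE/\Gamma'$ is right adjoint to post-composition with $f$, and the latter leaves underlying maps unchanged, hence preserves cofibrations and trivial cofibrations; so $f^*$ is right Quillen. By Ken Brown's lemma, a right Quillen functor sends weak equivalences between fibrant objects to weak equivalences, whence $f^*i$ is a weak equivalence in $\EE/\Gamma'$, and therefore in $\EE$.

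Combining the two halves, $f^*i$ is a trivial cofibration, i.e.\ a left map, which is precisely condition~(\ref{log-mod-cat:substitutions}). The main obstacle is the weak-equivalence half: pullbacks do not preserve weak equivalences in general, and the argument genuinely requires both that $A$ and $B$ be fibrant over $\Gamma$ (so that Ken Brown's lemma applies to the right Quillen functor $f^*$) and the cofibration-stability hypothesis (which rescues the cofibration half that pullback would otherwise destroy). The remaining verifications—that the slice weak factorization systems and the Quillen adjunction behave as claimed—are standard, and I would only need to state them carefully rather than prove them afresh.
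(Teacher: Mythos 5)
Your proposal is correct and follows essentially the same route as the paper: the paper's proof is precisely the observation that Ken Brown's lemma makes weak equivalences between fibrations over a base stable under pullback in the base, combined with the hypothesis that cofibrations are pullback-stable, so trivial cofibrations are too. Your write-up simply expands the Ken Brown step into its standard proof via the slice model structures and the right Quillen functor $f^* : \EE/\Gamma \myto \EE/\Gamma'$, which is a faithful elaboration rather than a different argument.
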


\begin{proof}
  By Ken Brown's lemma, weak equivalences between fibrations over a base are always stable under pullback in the base.
  Thus, if cofibrations are also stable under such pullbacks, so are trivial cofibrations.
\end{proof}

\begin{examples}
  The following is a (non-exhaustive) list of examples to which Theorem~\ref{thm:soundness} is easily seen to apply, using the lemmas above.  (Cf.\ \cite[Exs.~2.16]{shulman:univalence-for-inverse-diagrams}.)
  \begin{enumerate}[1.]
  \item The ``canonical'' model structures on $\cat$ and $\gpd$ are logical (since isofibrations are exponentiable in $\cat$) \cite{joyal-tierney:strong-stacks}.
  \item The usual (Kan/Quillen) and quasicategory (Joyal) model structures on $\ssets$ \cite{joyal:theory-of-quasi-cats} are both logical.
  \item Any right proper Cisinski model structure \cite{cisinski:les-prefaisceaux} is logical.
  \item Any locally cartesian closed right proper simplicial model category in which the cofibrations are monomorphisms is logical \cite[Cor.~2.46]{warren:thesis}.
   (Examples include simplicial presheaves and simplicial sheaves.)
 \item For any cofibrantly generated logical model category $\CC$ and small category $\JJ$, the injective and projective model structures on $\CC^{\JJ}$ are logical.
 \item If $\CC$ is any logical model category, and $\JJ$ is an inverse category, then the Reedy (equivalently, injective) model structure on $\CC^\JJ$ is logical.
   \cite[\textsection11]{shulman:univalence-for-inverse-diagrams} treats this example in detail, showing moreover that univalent universes in $\CC$ lift to univalent universes in $\CC^\JJ$, using the results of the present paper to obtain coherence.
 \end{enumerate}
\end{examples}

More examples of (semi-)logical weak factorization sytems are given (with slightly different terminology) in \cite[\textsection 3.3]{awodey:natural-models}.

\opt{acm}{
  \begin{acks}
  \input{temp-local-universes-acks}
  \end{acks}
}


\acmarxiv
  {\bibliographystyle{ACM-Reference-Format-Journals}}
  {\bibliographystyle{amsalphaurlmod}}
\bibliography{local-universes-bib}

\end{document}